\pgfplotsset{compat=1.3}
\renewcommand{\epsilon}{\varepsilon}
\title{Faster Wasserstein Distance Estimation \\ with the Sinkhorn Divergence}
\author{%
  L{\'e}na{\"i}c Chizat$^1$\thanks{Corresponding author: \texttt{lenaic.chizat@universite-paris-saclay.fr}}\hspace{1.4mm}, \quad Pierre Roussillon$^2$, \quad Flavien L{\'e}ger$^2$, \\
\textbf{Fran\c{c}ois-Xavier Vialard}$^3$, \qquad \textbf{Gabriel Peyr{\'e}}$^2$ \\[2mm]
  1: Laboratoire de Mathématiques d’Orsay, CNRS, Universit\'e Paris-Saclay, Orsay, France \\
  2: ENS, PSL University, Paris, France\\
  3: Univ. Gustave Eiffel, CNRS, ESIEE Paris, Marne-la-Vallée, France
  }
\begin{document}

\maketitle

\begin{abstract}
The squared Wasserstein distance is a natural quantity to compare probability distributions in a non-parametric setting. This quantity is usually estimated with the plug-in estimator, defined via a discrete optimal transport problem which can be solved to $\epsilon$-accuracy by adding an entropic regularization of order $\epsilon$ and using for instance Sinkhorn's algorithm.
In this work, we propose instead to estimate it with the Sinkhorn divergence, which is also built on entropic regularization but includes debiasing terms. 
We show that, for smooth densities, this estimator has a comparable sample complexity but allows higher regularization levels, of order $\epsilon^{1/2}$, which leads to improved computational complexity bounds and a strong speedup in practice. Our theoretical analysis covers the case of both randomly sampled densities and deterministic discretizations on uniform grids. We also propose and analyze an estimator based on Richardson extrapolation of the Sinkhorn divergence which enjoys improved statistical and computational efficiency guarantees, under a condition on the regularity of the approximation error, which is in particular satisfied for Gaussian densities.
We finally demonstrate the efficiency of the proposed estimators with numerical experiments.
\end{abstract}
\section{Introduction}\label{sec:introduction}

Certain tasks in machine learning (implicit generative modeling~\cite{mohamed2016learning}, two-sample testing~\cite{ramdas2017wasserstein}, structured prediction~\cite{frogner2015learning}) and imaging sciences (shape matching~\cite{glaunes2004diffeomorphic}, computer graphics~\cite{bonneel2011displacement}) require to quantify how much two probability densities $\mu,\nu \in \Pp(\RR^d)$ differ. The squared Wasserstein distance $W_2^2(\mu,\nu)$ (defined below) is often well suited for this purpose because of its appealing geometrical properties~\cite{villani2008optimal, santambrogio2015optimal, peyre2019computational} but it also raises important statistical and computational challenges. 
Indeed, in many practical settings, $\mu$ and $\nu$ are only accessed via empirical or discretized measures $\hat \mu_n,\hat \nu_n$ composed of $n$ atoms. A standard workaround is to use the \emph{plug-in estimator} $W_2^2(\hat \mu_n,\hat \nu_n)$, but although it is efficient when $\mu$ and $\nu$ are discrete~\cite{sommerfeld2018inference, tameling2019empirical}, this estimator suffers from the curse of dimensionality when $\mu$ and $\nu$ have densities~\cite[Cor.~2]{weed2019sharp}, with an estimation error that scales as $n^{-2/d}$ as we show in Section~\ref{sec:statisticalestimation}.
Moreover, solving the discrete optimal transport problem is computationally demanding when $n$ is large, with a time complexity bound scaling as $n^2\log(n)/\epsilon^2$ to reach $\epsilon$-accuracy with Sinkhorn's algorithm~\cite{dvurechensky2018computational,altschuler2017near}. These drawbacks give a strong motivation to define and study alternative estimators for $W^2_2(\mu,\nu)$ when $\mu$ and $\nu$ admit smooth densities.

\paragraph{Entropic regularization of optimal transport.}
In this paper, we consider instead estimators based on the idea of entropic regularization of optimal transport~\cite{wilson1969use, erlander1990gravity,kosowsky1994invisible,cuturi2013sinkhorn}. When $\mu$ and $\nu$ have finite second moments, the entropy regularized optimal transport cost is defined as~
\begin{equation}
\label{eq:entropycost}
    T_\lambda(\mu,\nu) \eqdef \min_{\gamma \in \Pi(\mu,\nu)} \int_{(\RR^d)^2}  \Vert y-x\Vert_2^2\,\dd\gamma(x,y) + 2\lambda H(\gamma, \mu\otimes \nu)
\end{equation}
where $\Pi(\mu,\nu)$
is the set of transport plans between $\mu$ and $\nu$, $\lambda \geq 0$ is the regularization parameter, and $H(\gamma,\mu\otimes \nu)$ is the entropy of $\gamma$ with respect to the product measure $\mu \otimes \nu$ (see details in the Notations paragraph). The squared Wasserstein distance is defined as $W_2^2(\mu,\nu) \eqdef T_0(\mu,\nu)$. Entropic regularization has been popularized as a method to compute $W_2^2(\hat\mu_n,\hat\nu_n)$ efficiently or simply as a different notion of discrepancy between measures. In contrast, we use it as a tool to directly estimate $W_2^2(\mu,\nu)$. For this purpose, the choice $T_\lambda(\hat \mu_n,\hat \nu_n)$ is not ideal because its large bias requires to set $\lambda$ to a small value, leading to computational difficulties.

\paragraph{The proposed estimators.} The first estimator that we consider is $\hat S_{\lambda,n} = S_\lambda(\hat \mu_n,\hat \nu_n)$ where $S_\lambda$ is the \emph{Sinkhorn divergence}~\cite{ramdas2017wasserstein} defined as
\begin{equation}\label{eq:sinkhorndivergence}
S_\lambda(\mu,\nu) \eqdef T_\lambda(\mu,\nu) - \frac12\big( T_\lambda(\mu,\mu) + T_\lambda(\nu,\nu)\big).
\end{equation}
In previous work~\cite{feydy2019interpolating}, the debiasing terms have been theoretically justified as a mean to have $S_\lambda(\mu,\nu)\geq 0$ with equality when $\mu=\nu$, a property not satisfied by $T_\lambda$. In the present work, we show that they in fact allow, under regularity assumptions, to approximate $W_2^2(\mu,\nu)$ with an error of order $\lambda^2$, instead of $\lambda \log(1/\lambda)$ for the uncorrected quantity $T_\lambda$. We also consider the estimator $\hat R_{\lambda,n} = R_\lambda(\hat \mu_n,\hat \nu_n)$ where $R_\lambda$ is built from $S_\lambda$ via Richardson extrapolation as
\begin{equation}\label{eq:richardson}
R_\lambda(\mu,\nu) \eqdef 2S_\lambda(\mu,\nu) - S_{\sqrt{2}\lambda}(\mu,\nu).
\end{equation}
This estimator has a smaller approximation error in $o(\lambda^2)$ and potentially in $O(\lambda^4)$ under restrictive regularity assumptions.

\paragraph{Contributions.}

We make the following contributions:
\begin{itemize}[label={--}]
\item
In Section~\ref{sec:approximation}, we exploit the dynamical formulation of~\eqref{eq:entropycost} to show that $\vert S_\lambda(\mu,\nu) - W_2^2(\mu,\nu)\vert\leq \lambda^2 I$ where $I$ depends on the Fisher information of $\mu$, of $\nu$ and of the $W_2$-geodesic connecting them. We also give a second-order expansion of this approximation error and detail several situations where $I$ admits a priori bounds.
\item 
In Section~\ref{sec:plugin}, we prove a sample complexity bound for the plug-in estimator $W_2^2(\hat \mu_n,\hat \nu_n)$ of order $n^{-2/d}$ which has a tight exponent in contrast to the previously known rate $n^{-1/d}$. This is the baseline rate against which we compare the performance of $\hat S_{\lambda,n}$ and of $\hat R_{\lambda,n}$.
\item 
In Section~\ref{sec:Srandom}, we study the performance of the Sinkhorn divergence estimator $\hat S_{\lambda,n}$ given independent samples. We show that when $\lambda$ is properly chosen, it enjoys comparable sample complexity bounds and improved computational guarantees in a certain sense. We also study the performance when the marginals are discretized on a uniform grid in Section~\ref{sec:discretized}.
\item In Section~\ref{sec:extrapolation}, we study estimators based on Richardson extrapolation such as $\hat R_{\lambda,n}$. Under an abstract and stronger regularity assumption, this estimator enjoys better computational and sample complexity bounds than the plug-in estimator. We discuss this assumption and show that it is satisfied for Gaussian densities.
\item 
In Section~\ref{sec:numerics}, we perform numerical experiments that confirm the benefits of the proposed estimators and suggest that our theoretical results could be extended in several ways.
\end{itemize}

\paragraph{Previous Works.}
Without additional assumptions, no estimator achieves better statistical rates than the plug-in estimator~\cite[Thm. 3]{niles2019estimation}. Recent breakthroughs in statistical optimal transport~\cite{weed2019estimation, hutter2019minimax} have shown that other estimators can exploit smoothness assumptions to attain faster and nearly minimax estimation rates for $W_2$ or the dual potentials, but they are \emph{a priori} not computationally efficient. In contrast, our goal in this paper is to improve the computational efficiency of estimating $W_2^2(\mu,\nu)$ and we are not aiming at statistical optimality.

The idea of entropic regularization has a long history in computational optimal transport. It has been shown in~\cite{altschuler2017near, dvurechensky2018computational} that solving $T_\lambda(\hat \mu_n,\hat \nu_n)$ to $\epsilon$-accuracy requires $O(n^2 / (\lambda \epsilon))$ arithmetic operations using Sinkhorn's algorithm if the domain is bounded (see Appendix~\ref{app:computational}). We use this bound in our discussions on computational complexity because it cleanly quantifies how harder the problem becomes as $\lambda$ becomes smaller and also because Sinkhorn's algorithm is simple to implement and widely used in practice. Choosing $\lambda \asymp \epsilon/\log(n)$ allows in turn to estimate $W_2^2(\hat \mu_n,\hat \nu_n)$ to $\epsilon$-accuracy in $O(n^2\log(n)/\epsilon^2)$ operations~\cite{dvurechensky2018computational}. There are however various algorithms with better guarantees both for the regularized~\cite{dvurechensky2018computational,allen2017much, cohen2017matrix} and the unregularized problem~\cite{lahn2019graph, quanrud2019approximating, blanchet2018towards}. In our numerical experiments, we use Sinkhorn's iterations combined with Anderson's acceleration~\cite{anderson1965iterative,scieur2016regularized}, which in practice strongly speeds up convergence.

In front of the difficulty to estimate $W_2^2(\mu,\nu)$, researchers have also turned their attention to similar but more tractable discrepancy measures such as the sliced Wasserstein distance~\cite{rabin2011wasserstein} or the Sinkhorn divergence~\cite{ramdas2017wasserstein}, which can be both estimated at the parametric rate~\cite{genevay2019sample, mena2019statistical, manole2019minimax, nadjahi2019asymptotic}. However, there is ``no free lunch'' and unconditional statistical efficiency comes at the price of lack of adaptivity and discriminative power. In particular, it is known that when $\lambda\to \infty$, $S_\lambda(\mu,\nu)$ converges to the squared distance between the expectations of $\mu$ and $\nu$, which is a degenerate form of Kernel Mean Discrepancy~\cite{genevay2018learning, feydy2019interpolating}. This shows that the discriminative power of $S_\lambda$ decreases as $\lambda$ increases, but this phenomenon is not yet well understood nor quantified. From a theoretical viewpoint, we thus believe that seeing $S_\lambda$ as an estimator for $W_2^2$ allows to clarify the trade-offs at play in the choice of $\lambda$ between the statistical, approximation and computational errors. 

\paragraph{Notations.} 
For two probability measures $\mu,\nu \in \Pp(\RR^d)$, we denote by 
$\Pi(\mu,\nu)$ the set of transport plans between $\mu$ and $\nu$, which is the set of measures $\gamma \in \Pp(\RR^d \times \RR^d)$ with marginal $\mu$ (resp.~$\nu$) on the first (resp.~second) factor of $\RR^d \times \RR^d$. The quantity $H(\mu,\nu)$ is the entropy of $\mu$ relative to $\nu$, defined as $H(\mu, \nu) \eqdef \int \log(\dd \mu/\dd\nu)\dd\mu$ when $\mu$ is absolutely continuous with respect to $\nu$, and $+\infty$ otherwise.
 When $\mu$ has a density with respect to the Lebesgue measure, written $\mu(x)$, we define $H(\mu) \eqdef \int \log(\mu(x))\mu(x)\dd x$ its entropy relative to the Lebesgue measure. Finally, $\mu\otimes \nu \in \Pp(\RR^d\times \RR^d)$ is the product measure characterized by $(\mu\otimes \nu)(A\times B) = \mu(A)\nu(B)$ for any pair of Borel sets $A,B \subset \RR^d$. 

\section{Refined approximation bound for the Sinkhorn divergence}\label{sec:approximation}

In this section, we study the approximation error of $S_\lambda$. To this goal, we leverage the dynamical formulation of entropic optimal transport~\cite{chen2016relation, gentil2017analogy,gigli2018benamoubrenier,conforti2019formula} which states that, for $\mu,\nu \in \Pp(\RR^d)$ absolutely continuous probability measures with compact support,
\begin{multline}\label{eq:EntropicDynamic}
    T_\lambda(\mu,\nu)  + {d\lambda}\log(2\pi \lambda) +\lambda (H(\mu)  + H(\nu)) =\\
    \inf_{\rho,v} \int_0^1 \int_{\RR^d}  \Big(\| v(t,x) \|^2_2 + \frac {\lambda^2} 4 \| \nabla_x \log(\rho(t,x)) \|^2_2\Big)  \rho(t,x) \, \dd x\, \dd t\,,
\end{multline}
where the infimum is taken over time-dependent probability measures $\rho(t,x)$ that interpolate between $\mu$ at $t=0$ and $\nu$ at $t=1$, and time-dependent vector fields $v(t,x)$ under the continuity equation constraint $\partial_t \rho(t,x) + \on{div}(\rho(t,x) v(t,x)) =  0$ where $\on{div}$ is the usual divergence operator. The first term in the r.h.s. of Eq.~\eqref{eq:EntropicDynamic} is the kinetic energy and the second is the Fisher information integrated in time.
For $\lambda\geq 0$, there exists a unique minimizer of the r.h.s.~\cite{gigli2018benamoubrenier} denoted by $\rho_\lambda$ and we define
\begin{equation}\label{eq:fisherinformation}
I_\lambda(\mu,\nu) \eqdef \int_0^1\int_{\RR^d} \Vert \nabla_x \log(\rho_\lambda(t,x))\Vert_2^2 \, \rho_\lambda(t,x)\,  \dd x \, \dd t \,.
\end{equation}
Remark that $I_0(\mu,\mu)$ is the Fisher information of $\mu$ and $I_0(\mu,\nu)$ is the Fisher information of the Wasserstein geodesic between $\mu$ and $\nu$.  Building on~\cite{conforti2019formula}, we next show that the Sinkhorn divergence approximates $W_2^2(\mu,\nu)$ with an error in $O(\lambda^2)$, as suggested by Eq.~\eqref{eq:EntropicDynamic}.
\begin{theorem}\label{th:bias} Assume that $\mu,\nu \in \Pp(\RR^d)$ have bounded densities and supports. It holds
$$
\big\vert S_\lambda(\mu,\nu) - W_2^2(\mu,\nu)\big\vert \leq \frac{\lambda^2}{4} \max\big\{ I_0(\mu,\nu), (I_0(\mu,\mu)+I_0(\nu,\nu))/2\big\}\,.
$$
If moreover $I_0(\mu,\nu),I_0(\mu,\mu),I_0(\nu,\nu)<\infty$ then $$ S_\lambda(\mu,\nu) - W_2^2(\mu,\nu) = \frac{\lambda^2}{4}\big(I_0(\mu,\nu)-(I_0(\mu,\mu)+I_0(\nu,\nu))/2\big) + o(\lambda^2).$$
\end{theorem}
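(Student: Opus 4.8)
The plan is to work entirely with the dynamical formulation~\eqref{eq:EntropicDynamic}. Denote by $\mathcal{A}_\lambda(\mu,\nu)$ the right-hand side of~\eqref{eq:EntropicDynamic}, so that $\mathcal{A}_0(\mu,\nu) = W_2^2(\mu,\nu)$ is the Benamou--Brenier formula and, for $\lambda\geq 0$, $\mathcal{A}_\lambda(\mu,\nu) = \int_0^1\!\int_{\RR^d}\|v_\lambda\|_2^2\,\rho_\lambda\,\dd x\,\dd t + \frac{\lambda^2}{4}\, I_\lambda(\mu,\nu)$ for the unique minimizer $(\rho_\lambda,v_\lambda)$. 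The first step is the algebraic identity underlying the debiasing: the correction terms $d\lambda\log(2\pi\lambda)$ and $\lambda(H(\mu)+H(\nu))$ in~\eqref{eq:EntropicDynamic} are additive, and $H(\mu)+H(\nu)$ is ``linear'' in the pair of marginals, so substituting~\eqref{eq:EntropicDynamic} into~\eqref{eq:sinkhorndivergence} makes every $\log\lambda$ and entropy term cancel and leaves
\[
S_\lambda(\mu,\nu) \;=\; \mathcal{A}_\lambda(\mu,\nu) - \tfrac12\big(\mathcal{A}_\lambda(\mu,\mu) + \mathcal{A}_\lambda(\nu,\nu)\big).
\]
Here $\mu$, $\nu$ and the diagonal pairs $(\mu,\mu)$, $(\nu,\nu)$ all satisfy the hypotheses of~\eqref{eq:EntropicDynamic}.

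Next I would establish, for any admissible pair of marginals, the two-sided estimate
\[
W_2^2(\mu,\nu) \;\leq\; \mathcal{A}_\lambda(\mu,\nu) \;\leq\; W_2^2(\mu,\nu) + \frac{\lambda^2}{4}\, I_0(\mu,\nu).
\]
The lower bound drops the nonnegative Fisher term and uses that $(\rho_\lambda,v_\lambda)$ is an admissible curve for the Benamou--Brenier problem; the upper bound uses the constant-speed $W_2$-geodesic $(\rho_0,v_0)$ as a competitor, whose action in~\eqref{eq:EntropicDynamic} is exactly $W_2^2(\mu,\nu) + \frac{\lambda^2}{4} I_0(\mu,\nu)$ by definition of $I_0$. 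Applied to the three pairs $(\mu,\nu)$, $(\mu,\mu)$, $(\nu,\nu)$ and combined with the identity above, using $W_2^2(\mu,\mu)=W_2^2(\nu,\nu)=0$, this gives $-\frac{\lambda^2}{8}(I_0(\mu,\mu)+I_0(\nu,\nu)) \leq S_\lambda(\mu,\nu)-W_2^2(\mu,\nu) \leq \frac{\lambda^2}{4} I_0(\mu,\nu)$, i.e.\ the first claim (trivial when one Fisher information is infinite).

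For the second-order expansion, by the cancellation identity it is enough to prove, when $I_0(\mu,\nu)<\infty$, the refined lower bound $\mathcal{A}_\lambda(\mu,\nu) \geq W_2^2(\mu,\nu) + \frac{\lambda^2}{4} I_0(\mu,\nu) - o(\lambda^2)$, and to apply it to all three pairs (with $W_2^2(\mu,\mu)=0$ and $I_0(\mu,\mu),I_0(\nu,\nu)<\infty$ assumed), which yields $S_\lambda(\mu,\nu)-W_2^2(\mu,\nu) = \frac{\lambda^2}{4}\big(I_0(\mu,\nu) - \tfrac12(I_0(\mu,\mu)+I_0(\nu,\nu))\big) + o(\lambda^2)$. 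Since $\mathcal{A}_\lambda(\mu,\nu) \geq W_2^2(\mu,\nu) + \frac{\lambda^2}{4} I_\lambda(\mu,\nu)$, the refined lower bound is equivalent to $\liminf_{\lambda\to 0^+} I_\lambda(\mu,\nu) \geq I_0(\mu,\nu)$. To obtain this, note the two-sided estimate forces $\mathcal{A}_\lambda(\mu,\nu)\to W_2^2(\mu,\nu)$, hence the kinetic energy $\int_0^1\!\int\|v_\lambda\|_2^2\,\rho_\lambda \to W_2^2(\mu,\nu)$: the interpolations $(\rho_\lambda)$ are asymptotically action-minimizing and, by the same action bound, stay in a fixed bounded set for $W_2$. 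Picking $\lambda_k\to 0$ realizing the liminf, a subsequence of the curves $(\rho_{\lambda_k})$ converges narrowly, uniformly in $t$ (Arzel\`a--Ascoli for $W_2$-absolutely continuous curves); lower semicontinuity of the Benamou--Brenier action identifies the limit as the unique $W_2$-geodesic $\rho_0$; and lower semicontinuity of the Fisher information $\sigma\mapsto\int\|\nabla\log\sigma\|_2^2\,\sigma$ for narrow convergence, together with Fatou's lemma in $t$, gives $\liminf_k I_{\lambda_k}(\mu,\nu) \geq \int_0^1\!\int\|\nabla\log\rho_0\|_2^2\,\rho_0 = I_0(\mu,\nu)$.

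The main obstacle is precisely this last convergence: showing that the nearly-optimal entropic interpolations converge (narrowly, in time) to the Wasserstein geodesic — which requires uniform-in-$\lambda$ moment control for compactness and tightness — and that the time-integrated Fisher information is lower semicontinuous along this convergence. This is the point where I would lean on the analysis of~\cite{conforti2019formula, gigli2018benamoubrenier} and the metric-measure machinery of Ambrosio--Gigli--Savar\'e; everything else is bookkeeping around the cancellation identity and the two competitor bounds.
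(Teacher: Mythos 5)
Your proposal is correct and follows essentially the same route as the paper: the same cancellation identity reducing $S_\lambda$ to the debiased dynamical functional, the same two competitor bounds (geodesic as upper competitor, dropping the Fisher term for the lower one), and the same key limiting step — convergence of the near-optimal entropic interpolations to the Wasserstein geodesic combined with lower semicontinuity of the time-integrated Fisher information (the paper packages this as the right-differentiability of $\sigma\mapsto J_\sigma$ in Lemma~\ref{lem:rightderivative}, invoking concavity and~\cite{gianazza2009wasserstein}, whereas you sandwich directly via $\liminf_{\lambda\to0}I_\lambda\geq I_0$, a purely cosmetic difference). The difficulty you flag at the end is exactly the one the paper resolves by treating the Fisher information as a jointly lower semicontinuous functional of the space-time measure $\rho$ and its distributional gradient, rather than via Fatou in $t$.
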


\begin{proof}
Denote the right-hand side of \eqref{eq:EntropicDynamic} by $J_{\lambda^2}(\mu,\nu)$ and note that
$
    S_\lambda(\mu,\nu) - W_2^2(\mu,\nu) = (J_{\lambda^2}(\mu,\nu) - J_0(\mu,\nu))  - ( J_{\lambda^2}(\mu,\mu) + J_{\lambda^2}(\nu,\nu))/2
$ and
$J_0(\mu,\mu)=J_0(\nu,\nu)=0$. Since $\rho_0$ is feasible in Eq.~\eqref{eq:EntropicDynamic}, we have $J_0(\mu,\nu)\leq J_{\lambda^2}(\mu,\nu)\leq J_0(\mu,\nu)+(\lambda^2/4)I_0(\mu,\nu)$, hence the bound. For the second claim, we prove in Appendix~\ref{app:bias} (Lemma~\ref{lem:rightderivative}) that the right derivative at $0$ of $\sigma\mapsto J_\sigma$ is $\frac14 I_0(\mu,\nu)$, which justifies the Taylor expansion.
\end{proof}
The Fisher information of $\mu$ or $\nu$ can be bounded by assuming regularity of the densities, but bounding $I_0(\mu,\nu)$ is more subtle. Next, we bound $I_0(\mu,\nu)$ assuming regularity on the \emph{Brenier potential} $\varphi$, which is the convex function such that $\nabla \varphi$ is the optimal transport map from $\mu$ to $\nu$~\cite{santambrogio2015optimal}.

\begin{proposition}\label{cor:FirstFisherBound}
Let $\mu,\nu \in \Pp(\RR^d)$ be absolutely continuous with compact support. Assume that the Brenier potential $\varphi$ has a Hessian satisfying $0\prec \kappa \mathrm{Id} \preceq \nabla^2 \varphi \preceq K\mathrm{Id}$ and that $\nabla^2 \varphi$ is $L$-Lipschitz continuous, then
$
I_0(\mu,\nu) \leq 2 \kappa^{-1}(I_0(\mu,\mu) +  \kappa^{-2} L^2/3)\,.
$
In particular, if $\varphi$ is quadratic then $I_0(\mu,\nu) \leq  2\kappa^{-1}I_0(\mu,\mu)$. If $d = 1$, then
$
   I_0(\mu,\nu) \leq \frac 23  ( \kappa^{-1} I_0(\mu,\mu) +  K I_0(\nu,\nu))\,.
$
\end{proposition}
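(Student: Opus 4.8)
The plan is to express $I_0(\mu,\nu)$ as the time–average of the Fisher information along the $W_2$–geodesic and to transport that quantity back onto $\mu$. For $\lambda=0$ the minimizer $\rho_0$ of the right–hand side of~\eqref{eq:EntropicDynamic} is the McCann interpolation $\rho_t=(S_t)_\#\mu$ with $S_t\eqdef(1-t)\mathrm{Id}+t\nabla\varphi$, so that $I_0(\mu,\nu)=\int_0^1\mathcal{I}(\rho_t)\,\dd t$, where $\mathcal{I}(\rho)\eqdef\int\|\nabla\log\rho\|^2\,\dd\rho$ is the Fisher information and $\mathcal{I}(\mu)=I_0(\mu,\mu)$.

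For general $d$ I would start from the change–of–variables identity: writing $A_t\eqdef\nabla S_t=(1-t)\mathrm{Id}+t\nabla^2\varphi$, the density of $\rho_t$ satisfies $\log\rho_t(S_t(x))=\log\mu(x)-\log\det A_t(x)$, hence
\[
(\nabla\log\rho_t)(S_t(x))=A_t(x)^{-1}\bigl(\nabla\log\mu(x)-\nabla_x\log\det A_t(x)\bigr),
\]
and, pushing forward, $\mathcal{I}(\rho_t)=\int\|A_t^{-1}(\nabla\log\mu-\nabla_x\log\det A_t)\|^2\,\dd\mu$. Combining $\|a-b\|^2\le2\|a\|^2+2\|b\|^2$ with $\|A_t(x)^{-1}\|_{\mathrm{op}}\le\bigl((1-t)+t\kappa\bigr)^{-1}$ (a consequence of $\kappa\mathrm{Id}\preceq\nabla^2\varphi$) splits $\mathcal{I}(\rho_t)$ into a score term $\le2\bigl((1-t)+t\kappa\bigr)^{-2}\mathcal{I}(\mu)$ and a Jacobian term $\le2\bigl((1-t)+t\kappa\bigr)^{-2}\sup_x\|\nabla_x\log\det A_t(x)\|^2$. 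Integrating in $t$ and using the elementary identity $\int_0^1\bigl((1-t)+t\kappa\bigr)^{-2}\,\dd t=\kappa^{-1}$ disposes of the score term, producing the $2\kappa^{-1}I_0(\mu,\mu)$ contribution. For the Jacobian term, the $k$-th component of $\nabla_x\log\det A_t(x)$ is $t\,\mathrm{tr}\bigl(A_t(x)^{-1}\partial_k\nabla^2\varphi(x)\bigr)$, which the $L$–Lipschitz continuity of $\nabla^2\varphi$ controls (up to a dimensional factor) by $tL\bigl((1-t)+t\kappa\bigr)^{-1}$; together with $\int_0^1 t^2\bigl((1-t)+t\kappa\bigr)^{-2}\,\dd t\le\tfrac13\kappa^{-2}$ this yields the announced $O(\kappa^{-3}L^2)$ remainder. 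When $\varphi$ is quadratic, $A_t$ is constant in $x$, so $\nabla_x\log\det A_t\equiv0$ and $L=0$, and only the score term survives.

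In dimension one I would instead pass to the density--quantile parametrization, in which the third–order contributions collapse after averaging in $t$. Writing $F_\rho$ for the cumulative distribution function of $\rho$ and $g_\rho\eqdef\rho\circ F_\rho^{-1}$, one has $\mathcal{I}(\rho)=\int_0^1(g_\rho')^2$ (with $'=\dd/\dd q$), quantile functions interpolate affinely along the geodesic so that $1/g_{\rho_t}=(1-t)/g_\mu+t/g_\nu$, and the Hessian bound reads $\kappa\le g_\mu/g_\nu\le K$ pointwise. Substituting and evaluating the $t$–integral explicitly (e.g.\ via the change of variable $u=(1-t)/g_\mu+t/g_\nu$ followed by $w=1/u$, using $\int_p^P(\text{affine})^2=\tfrac{P-p}{3}(\ell_p^2+\ell_p\ell_P+\ell_P^2)$) gives the closed form
\[
I_0(\mu,\nu)=\frac13\int_0^1\Bigl(\tfrac{g_\nu}{g_\mu}(g_\mu')^2+g_\mu'g_\nu'+\tfrac{g_\mu}{g_\nu}(g_\nu')^2\Bigr)\,\dd q.
\]
Applying $X^2+XY+Y^2\le2(X^2+Y^2)$ with $X=(g_\nu/g_\mu)^{1/2}g_\mu'$, $Y=(g_\mu/g_\nu)^{1/2}g_\nu'$, then $g_\nu/g_\mu\le\kappa^{-1}$ and $g_\mu/g_\nu\le K$, gives $I_0(\mu,\nu)\le\tfrac23\bigl(\kappa^{-1}\mathcal{I}(\mu)+K\mathcal{I}(\nu)\bigr)$, which is the claim since $\mathcal{I}(\mu)=I_0(\mu,\mu)$ and $\mathcal{I}(\nu)=I_0(\nu,\nu)$.

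The main obstacle is rigor rather than algebra: one must make sense of the densities of $\rho_t$ and of $\log\det\nabla^2\varphi$ and justify the changes of variables under the sole hypotheses that $\mu,\nu$ are bounded with compact support and $\nabla^2\varphi$ is Lipschitz. I would handle this via Caffarelli–type regularity together with a mollification argument reducing to smooth, uniformly log–bounded densities, the bounds being stable under such approximation by lower semicontinuity of $\mathcal{I}$ and weak convergence of the geodesics; the cases where $I_0(\mu,\mu)$ or $I_0(\nu,\nu)$ is infinite are trivial. The one genuinely delicate estimate is the Jacobian term in general dimension, where obtaining the stated dependence on $L$ requires care with $\mathrm{tr}(A_t^{-1}\partial_k\nabla^2\varphi)$ and with the matrix norm used to quantify the Lipschitz continuity of the field $\nabla^2\varphi$.
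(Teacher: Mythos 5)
Your argument for general $d$ is essentially the paper's own proof: the paper likewise starts from the mass-conservation identity $\log\rho_0(x)=\log\rho_t(\nabla\varphi_t(x))+\log\det\nabla^2\varphi_t(x)$ with $\varphi_t=(1-t)\|\cdot\|^2/2+t\varphi$, differentiates it to isolate $\nabla\log\rho_t$, pushes the Fisher information back onto $\mu$, splits the two contributions with $\|a+b\|^2\le 2\|a\|^2+2\|b\|^2$ and $\nabla^2\varphi_t\succeq((1-t)+t\kappa)\mathrm{Id}$, and evaluates the same time integrals. One quantitative slip to fix: the Jacobian term produces $2L^2\int_0^1 t^2\bigl((1-t)+t\kappa\bigr)^{-4}\dd t$, and this integral equals $\tfrac13\kappa^{-3}$ exactly (the integrand is the derivative of $t^3/\bigl(3((1-t)+t\kappa)^3\bigr)$), which yields the stated $\tfrac23\kappa^{-3}L^2$ remainder; the inequality you quote instead, $\int_0^1 t^2\bigl((1-t)+t\kappa\bigr)^{-2}\dd t\le\tfrac13\kappa^{-2}$, is not the integral that arises and is moreover false for $\kappa>1$. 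For $d=1$ you take a mildly different route: the paper stays in the $x$-variable and reparametrizes time via $1-s=t\varphi''/\bigl((1-t)+t\varphi''\bigr)$ to reach a closed form, whereas you pass to quantile coordinates, where the geodesic is an affine interpolation of quantile functions and the $t$-integral reduces to $\int_0^1((1-s)A+sB)^2\dd s=\tfrac13(A^2+AB+B^2)$. The two computations land on the same formula (your $g_\mu/g_\nu$ is exactly $\varphi''$ read in the quantile variable), and the final estimate via $X^2+XY+Y^2\le 2(X^2+Y^2)$ coincides with the paper's; the quantile version is arguably cleaner and makes the $\mu\leftrightarrow\nu$ symmetry transparent. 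Your closing remarks on regularity and on the norm in which $L$ controls $d^3\varphi$ match the (equally informal) treatment in the paper.
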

Sufficient conditions on the densities of $\mu$ and $\nu$ to guarantee bounds on $\nabla^2\varphi$ are known (e.g.\ bounds on their first derivative and on their log-densities over their convex support~\cite[Thm 3.3]{philippis2013mongeampre}).
However, the assumption that $\nabla^2\varphi$ is Lipschitz continuous is more demanding and potentially not sharp as it can be avoided when $d=1$. Note that the Brenier potential $\varphi$ is quadratic whenever the densities are in the same family of elliptically contoured distributions~\cite{bhatia2019bures}. For Gaussian densities, we show in Appendix~\ref{app:bias} that $I_0(\mu,\nu)$ admits an explicit expression, given in Section~\ref{sec:extrapolation}.

\section{Performance analysis of the Sinkhorn divergence estimator}\label{sec:statisticalestimation}
In this section, we discuss the performance of the Sinkhorn divergence estimator in two situations: when we observe independent samples or when we have access to discretized densities. But first, we study the plug-in estimator, which is the baseline against which our estimators are compared.  
\subsection{Analysis of the plug-in estimator}\label{sec:plugin}

\paragraph{A tighter statistical bound for the plug-in estimator.}
Let us first study the rate of convergence of $W_2^2(\hat \mu_n,\hat\nu_n)$ towards $W_2^2(\mu,\nu)$ where $\hat \mu_n$ and $\hat \nu_n$ are empirical distributions of $n$ independent samples.
This is well-studied in the case $\mu=\nu$, but the case $\mu\neq\nu$ was not specifically covered in the literature except for discrete measures~\cite{sommerfeld2018inference}. 
\begin{theorem}\label{thm:plug-in}
If $\mu,\nu \in \Pp(\RR^d)$ are supported on a set of diameter $1$ then it holds
\begin{equation*}
    \Esp \big[ \vert W_2^2(\hat \mu_n, \hat\nu_n) -  W_2^2(\mu, \nu)\vert\big] \lesssim \begin{cases}
    n^{-2/d} &\text{if $d>4$,}\\
    n^{-1/2}\log(n) &\text{if $d=4$,}\\
    n^{-1/2} &\text{if $d<4$,}
\end{cases}
\end{equation*}
where the notation $\lesssim$ hides constants that only depend on the dimension $d$. Also, this estimator concentrates well around its expectation, in the sense that for all $t\geq 0$,
$$
\mathbf{P}\Big[ \vert W_2^2(\hat \mu_n,\hat \nu_n) -\Esp[ W_2^2(\hat \mu_n,\hat \nu_n)]\vert \geq t \Big]\leq 2\exp(-nt^2).
$$

\end{theorem}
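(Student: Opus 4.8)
The theorem has two parts: an expectation bound on the estimation error and a concentration inequality. I would handle them separately, starting with the concentration inequality since it is cleaner and will also clarify the role of the diameter-$1$ assumption.

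For the concentration statement, the plan is to apply McDiarmid's bounded-differences inequality to the function $(X_1,\dots,X_n,Y_1,\dots,Y_n)\mapsto W_2^2(\hat\mu_n,\hat\nu_n)$. The key observation is that changing a single sample point, say $X_i$, moves $\hat\mu_n$ by at most $1/n$ in, e.g., total variation, and more importantly changes the optimal transport cost by at most a controlled amount: since all points lie in a set of diameter $1$, replacing one atom of mass $1/n$ changes $W_2^2$ by at most (a constant times) $1/n$ — one can bound this by exhibiting a feasible plan for the perturbed problem built from the optimal plan of the original, rerouting only the mass $1/n$ attached to the moved point, at a cost increment $\le \mathrm{diam}^2/n = 1/n$. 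Thus each coordinate has bounded difference $c_i \le C/n$ for an absolute constant $C$, and McDiarmid gives $\mathbf{P}[|W_2^2(\hat\mu_n,\hat\nu_n)-\mathbb E[\cdot]|\ge t]\le 2\exp(-2t^2/\sum c_i^2)=2\exp(-2t^2/(2nC^2/n^2))$; tracking constants carefully (and noting $W_2^2\le 1$ already forces small fluctuations) yields the stated $2\exp(-nt^2)$.

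For the expectation bound, the plan is to reduce to the one-sample rates via the triangle-type decomposition
$$
\big|W_2^2(\hat\mu_n,\hat\nu_n)-W_2^2(\mu,\nu)\big|\le \big|W_2^2(\hat\mu_n,\hat\nu_n)-W_2^2(\mu,\nu)\big|,
$$
handled through $W_2(\hat\mu_n,\hat\nu_n)\le W_2(\hat\mu_n,\mu)+W_2(\mu,\nu)+W_2(\nu,\hat\nu_n)$ and the reverse inequality, so that using $|a^2-b^2|=|a-b|(a+b)$ together with $W_2\le \mathrm{diam}=1$ one gets
$$
\big|W_2^2(\hat\mu_n,\hat\nu_n)-W_2^2(\mu,\nu)\big|\lesssim W_2(\hat\mu_n,\mu)+W_2(\hat\nu_n,\nu).
$$
Then I invoke the sharp one-sample convergence rate $\mathbb E[W_2(\hat\mu_n,\mu)]\lesssim n^{-1/d}$ for $d>2$ (with the $\log$ correction at $d=2$ and rate $n^{-1/2}$ for $d=1$), from Fournier–Guillin / Weed–Bach. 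This, however, only gives $n^{-1/d}$, which is the \emph{weaker} previously-known exponent — so the naive decomposition is not enough to reach $n^{-2/d}$.

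The main obstacle, and the crux of the improvement, is upgrading $n^{-1/d}$ to $n^{-2/d}$ when $d>4$. The point is that $W_2^2(\hat\mu_n,\hat\nu_n)-W_2^2(\mu,\nu)$ is a \emph{difference of squared distances}, and the first-order (linear-in-perturbation) term cancels in expectation or is itself small, leaving a quadratic dependence on $W_2(\hat\mu_n,\mu)$ and $W_2(\hat\nu_n,\nu)$. Concretely, I would center the argument on a Kantorovich potential: let $(\psi_\mu,\psi_\nu)$ be optimal dual potentials for $W_2^2(\mu,\nu)$; then $W_2^2(\hat\mu_n,\hat\nu_n)\ge \int\psi_\mu\,d\hat\mu_n+\int\psi_\nu\,d\hat\nu_n$, and $\mathbb E[\int\psi_\mu\,d\hat\mu_n]=\int\psi_\mu\,d\mu$, so in expectation this lower bound equals exactly $W_2^2(\mu,\nu)$ — giving $\mathbb E[W_2^2(\hat\mu_n,\hat\nu_n)]\ge W_2^2(\mu,\nu)$ with no error term at all. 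For the matching upper bound one controls $\mathbb E[W_2^2(\hat\mu_n,\hat\nu_n)]-W_2^2(\mu,\nu)$ by feeding the optimal plan $\gamma$ between $\mu,\nu$ through couplings of $\hat\mu_n$ with $\mu$ and $\hat\nu_n$ with $\nu$, and the excess cost is then bounded by cross terms that, after Cauchy–Schwarz and using $\mathbb E[W_2^2(\hat\mu_n,\mu)]\lesssim n^{-2/d}$ (the \emph{squared}-distance one-sample rate, which is $n^{-2/d}$ for $d>4$, not $(n^{-1/d})^2$ trivially but genuinely from the literature on $\mathbb E[W_2^2]$), close at the rate $n^{-2/d}$; the boundary cases $d=4$ (extra $\log n$) and $d<4$ (parametric $n^{-1/2}$, where a CLT-type or empirical-process argument on the potentials gives the $\sqrt n$ rate directly) are then read off from the corresponding $\mathbb E[W_2^2(\hat\mu_n,\mu)]$ bounds. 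The delicate points are making the one-sided potential argument rigorous (ensuring $\psi_\mu$ is integrable and the dual value is attained, which follows from compact support) and correctly quoting the $\mathbb E[W_2^2(\hat\mu_n,\mu)]$ rates with their exact phase transition at $d=4$.
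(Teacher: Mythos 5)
Your concentration argument is essentially the paper's: the paper also applies the bounded-difference inequality, working on the primal problem (changing one sample alters one row of the cost matrix by at most $D^2/2$ per entry, hence the value by at most $D^2/n$), which over $2n$ samples gives $2\exp(-nt^2/D^4)$. That part is fine.

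The expectation bound, however, has a genuine gap in the upper-bound half. Your lower bound is sound: plugging the population dual potentials into the empirical dual problem gives $\Esp[W_2^2(\hat\mu_n,\hat\nu_n)]\geq W_2^2(\mu,\nu)$ exactly. But the proposed upper bound by gluing the optimal plan $\gamma\in\Pi(\mu,\nu)$ with optimal couplings of $(\hat\mu_n,\mu)$ and $(\nu,\hat\nu_n)$ does not deliver $n^{-2/d}$: expanding $\|(X'-X)+(X-Y)+(Y-Y')\|_2^2$ produces cross terms such as $2\,\Esp\langle X'-X,\,X-Y\rangle$, which after Cauchy--Schwarz are bounded by $2\,W_2(\hat\mu_n,\mu)\,W_2(\mu,\nu)$ and hence are of order $n^{-1/d}$ whenever $\mu\neq\nu$ --- exactly the rate you were trying to beat. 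There is no cancellation: the optimal coupling of $(\hat\mu_n,\mu)$ is a random object with no conditional-centering property, so the first-order term does not vanish in expectation. Note also that the theorem controls $\Esp|W_2^2(\hat\mu_n,\hat\nu_n)-W_2^2(\mu,\nu)|$, not the bias, so even a correct bias bound would still need to be combined with the concentration inequality to conclude.

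The missing idea is a \emph{two-sided, uniform} dual argument. The paper bounds the error (for each marginal separately, via a triangle inequality through $T_0(\hat\mu_n,\nu)$) by $\sup_{\varphi\in\mathcal F_R}|\int\varphi\,\dd(\hat\mu_n-\mu)|$ plus a second-moment term, where, after the change of variables $\varphi=\tfrac12\|\cdot\|_2^2-u$ and replacing potentials by their double Legendre conjugates, $\mathcal F_R$ is the class of \emph{convex $R$-Lipschitz} functions on the ball. The uniformity over $\mathcal F_R$ is what handles the (random) empirical potentials; the rate then comes from Bronshtein's metric-entropy bound $\log N_\infty(\mathcal F_R,u)\lesssim u^{-d/2}$ fed into Dudley's chaining, which is precisely where the exponent $2/d$ and the phase transition at $d=4$ originate. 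Your proposal never identifies this function class or its entropy, and without it the $n^{-2/d}$ rate is not reachable by the primal gluing route.
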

To prove this result in Appendix~\ref{app:plugin}, we first upper bound the expected error by the Rademacher complexity of a certain set of convex and Lipschitz functions. We use Dudley's chaining and a bound on the covering number of this set of functions due to Bronshtein~\cite{bronshtein1976varepsilon} to conclude. The concentration bound is already present in a similar form in~\cite[Prop.~20]{weed2019sharp}.
 When $\mu=\nu$, this bound is well-known and has a sharp exponent~\cite{weed2019sharp, singh2018minimax, boissard2014mean, dudley1969speed, fournier2015rate}. However, perhaps surprisingly, this result implies that the plug-in estimator $W_2(\hat \mu_n,\hat \nu_n)$ (without the square) converges at the rate $n^{-2/d}$ when $\mu\neq \nu$, while only a bound in $n^{-1/d}$ (the rate when $\mu=\nu$) was known. This is the content of the following corollary. See Figure~\ref{fig:sample_complexity plug in} for a numerical illustration of these rates.
 \begin{corollary}\label{cor:SCW2}
 Assume that $\mu,\nu$ are supported on a set of diameter $1$ and satisfy $W_2(\mu,\nu)\geq \alpha>0$. Then $ \Esp \big[ \vert W_2(\mu_n,\nu_n) -W_2(\mu,\nu)\vert \big]$ enjoys the bound given in Theorem~\ref{thm:plug-in} multiplied by $1/\alpha$.
 \end{corollary}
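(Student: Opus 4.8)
The plan is to deduce the non-squared bound directly from the squared one established in Theorem~\ref{thm:plug-in}, by means of the elementary identity $a-b = (a^2-b^2)/(a+b)$, valid for any reals with $a+b\neq 0$. Concretely, I would write, for each realization of the samples,
$$
W_2(\hat\mu_n,\hat\nu_n) - W_2(\mu,\nu) = \frac{W_2^2(\hat\mu_n,\hat\nu_n) - W_2^2(\mu,\nu)}{W_2(\hat\mu_n,\hat\nu_n) + W_2(\mu,\nu)}.
$$
Since $W_2(\hat\mu_n,\hat\nu_n)\geq 0$ and $W_2(\mu,\nu)\geq \alpha>0$, the denominator is bounded below by $\alpha$ (in particular it never vanishes), so taking absolute values gives the pointwise bound
$$
\big\vert W_2(\hat\mu_n,\hat\nu_n) - W_2(\mu,\nu)\big\vert \leq \frac1\alpha\, \big\vert W_2^2(\hat\mu_n,\hat\nu_n) - W_2^2(\mu,\nu)\big\vert .
$$

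Then I would take expectations on both sides and invoke Theorem~\ref{thm:plug-in}, which controls $\Esp[\vert W_2^2(\hat\mu_n,\hat\nu_n) - W_2^2(\mu,\nu)\vert]$ by a dimension-only constant times $n^{-2/d}$, $n^{-1/2}\log(n)$, or $n^{-1/2}$ according to whether $d>4$, $d=4$, or $d<4$. Multiplying through by $1/\alpha$ yields exactly the claimed bound, and the diameter-$1$ hypothesis is simply inherited from Theorem~\ref{thm:plug-in}, which is what keeps its constants dimension-only.

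There is essentially no obstacle here: the argument is a one-line algebraic reduction followed by monotonicity of the expectation. The only point worth a remark is that the lower bound on the denominator must hold almost surely rather than merely in expectation, which it does because $W_2$ is nonnegative and $W_2(\mu,\nu)$ is a deterministic quantity at least $\alpha$; this is precisely why the natural hypothesis is $W_2(\mu,\nu)\geq\alpha$ and not, say, a lower bound on $\Esp[W_2(\hat\mu_n,\hat\nu_n)]$. One may additionally observe that the same manipulation transfers the concentration inequality of Theorem~\ref{thm:plug-in} to $W_2$ up to the factor $1/\alpha$, though the corollary as stated only records the bound on the expectation.
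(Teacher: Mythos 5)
Your proof is correct and is essentially identical to the paper's: both write $W_2(\hat\mu_n,\hat\nu_n)-W_2(\mu,\nu)$ as the difference of squares over the sum, lower-bound the denominator by $\alpha$ using $W_2(\mu,\nu)\geq\alpha$ and nonnegativity, and take expectations to invoke Theorem~\ref{thm:plug-in}. No gaps.
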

 \begin{proof}
It is sufficient to take expectations in the following inequality :
$$
\vert W_2(\hat \mu_n,\hat \nu_n)-W_2(\mu,\nu)\vert = \frac{\vert W_2^2(\hat \mu_n,\hat \nu_n) -W_2^2(\mu,\nu)\vert }{W_2(\mu,\nu)+W_2(\hat \mu_n,\hat \nu_n)} \leq \frac{1}{\alpha} \vert W_2^2(\hat \mu_n,\hat \nu_n) -W_2^2(\mu,\nu)\vert.\qedhere
$$
\end{proof}

          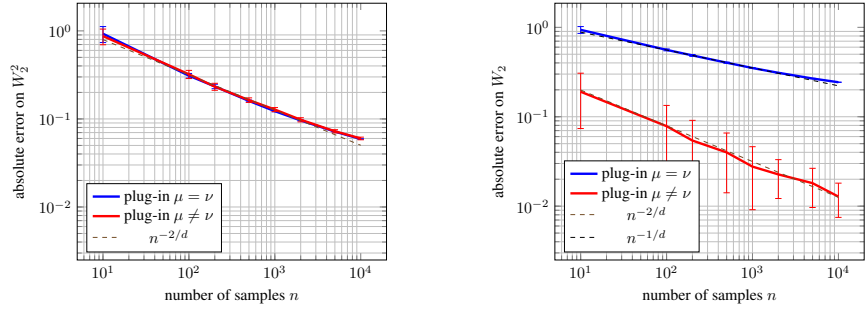
\begin{figure}
            \centering
                        \begin{subfigure}[b]{0.45\linewidth}
                        \centering
               \begin{tikzpicture}[scale = 0.6]
  \begin{loglogaxis}[xlabel = number of samples $n$, ylabel = absolute error on $W_2^2$, no markers, legend pos=south west, grid = minor, ymin = 1/400]
    \addplot[line width = 1.5pt, color = blue,error bars/.cd, y dir=both, y explicit] table [x= Nsamples, y= error_on_cost,y error = std_on_cost, col sep=space] {figures/plug_in_same_distr_elliptic_W22_dimension_5.csv};
    \addlegendentry{plug-in $\mu = \nu$}
        \addplot[line width = 1.5pt, color = red, error bars/.cd, y dir=both, y explicit] table [x= Nsamples, y= error_on_cost,y error = std_on_cost, col sep=space] {figures/plug_in_diff_distr_elliptic_W22_dimension_5.csv};
        \addlegendentry{plug-in $\mu \neq \nu$}
                    \addplot+[samples at = {10, 100, 200, 500, 1000, 2000, 5000, 10000}, dashed ] {2*(x)^(-2/5)};
                    \addlegendentry{$n^{-2/d}$}
\end{loglogaxis}
\end{tikzpicture}
            \end{subfigure}
            \begin{subfigure}[b]{0.45\linewidth}
            \centering
              \begin{tikzpicture}[scale = 0.6]
  \begin{loglogaxis}[xlabel = number of samples $n$, ylabel = absolute error on $W_2$, no markers, legend pos=south west, grid = minor, ymin = 1/400]
    \addplot[line width = 1.5pt, color = blue,error bars/.cd, y dir=both, y explicit] table [x= Nsamples, y= error_on_cost,y error = std_on_cost, col sep=space] {figures/plug_in_same_distr_elliptic_dimension_5.csv};
    \addlegendentry{plug-in $\mu = \nu$}
        \addplot[line width = 1.5pt, color = red, error bars/.cd, y dir=both, y explicit] table [x= Nsamples, y= error_on_cost,y error = std_on_cost, col sep=space] {figures/plug_in_diff_distr_elliptic_dimension_5.csv};
        \addlegendentry{plug-in $\mu \neq \nu$}
                    \addplot+[samples at = {10, 100, 200, 500, 1000, 2000, 5000, 10000}, dashed ] {0.5*(x)^(-2/5)};
                    \addlegendentry{$n^{-2/d}$}
                                        \addplot+[samples at = {10, 100, 200, 500, 1000, 2000, 5000, 10000}, dashed ] {1.4*(x)^(-1/5)};
            \addlegendentry{$n^{-1/d}$}
\end{loglogaxis}
\end{tikzpicture}
\end{subfigure}
\caption{Estimation error of the plug-in estimator for $\mu,\nu$ compactly supported with $d=5$ (as detailed in Appendix~\ref{app:additionalnumerics}). Left: error on the cost $W^2_2$ has rate $n^{-2/d}$ (Theorem~\ref{thm:plug-in}). Right: error on $W_2$ has rate $n^{-1/d}$ if $\mu = \nu$ and $n^{-2/d}$ if $\mu \neq \nu$ (Corollary~\ref{cor:SCW2}) with $\mathbf{E}_\mu[x]=0$ and $\mathbf{E}_\nu[x]=(1,\dots,1)$.
}
\label{fig:sample_complexity plug in}
\end{figure}

\paragraph{Computational complexity via Sinkhorn's algorithm.} In previous work~\cite{altschuler2017near, dvurechensky2018computational}, solving $T_\lambda(\hat \mu_n,\hat \nu_n)$ with $\lambda>0$ has been studied as a computationally efficient way to compute $T_0(\hat \mu_n,\hat \nu_n)$ and related quantities. One standard algorithm to compute $T_\lambda$ is Sinkhorn's algorithm, which can be interpreted as alternate block maximization on the dual of Eq.~\eqref{eq:entropycost}, see Appendix~\ref{app:computational}. Given two discrete marginals $\hat \mu_n = \sum_{i=1}^n p_i\delta_{x_i}$ and $\hat \nu_n=\sum_{i=1}^n q_j \delta_{y_j}$, let us define the cost matrix with entries $c_{i,j} = \frac12 \Vert x_i -y_j\Vert_2^2$. The iterates $u^{(k)}, v^{(k)} \in \RR^n$, $k\geq 1$ of Sinkhorn's algorithm are defined as follows: let $v^{(0)}=0 \in \RR^n$ and let
\begin{align}\label{eq:sinkhorniteration}
u^{(k)}_i = -\lambda \log \Big(\sum_{j=1}^n e^{(v^{(k-1)}_j-c_{i,j})/\lambda}q_j\Big) 
&&\text{and}&&
v^{(k)}_j = -\lambda \log \Big(\sum_{i=1}^n e^{(u^{(k)}_i-c_{i,j})/\lambda}p_i\Big).
\end{align}
An estimate for $\hat T_{\lambda,n} \eqdef T_\lambda(\hat \mu_n,\hat \nu_n)$ is then given by $\hat T^{(k)}_{\lambda,n} = 2\sum_{i=1}^n u_i^{(k)}p_i + v_i^{(k)}q_i$. These iterations enjoy the following guarantee, proved in~\cite{dvurechensky2018computational} (see details in Appendix~\ref{app:computational}).

\begin{proposition}\label{prop:sinkhornscomputational}
It holds $\vert \hat T^{(k)}_{\lambda,n} - \hat T_{\lambda,n} \vert\leq 2 \Vert c\Vert^2_\infty/(\lambda k)$ where $\Vert c\Vert_\infty = \max_{i,j} \Vert x_i-y_j\Vert_2^2/2$.
\end{proposition}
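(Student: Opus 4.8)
The plan is to reduce the statement to a suboptimality bound for block-coordinate ascent on the dual of entropic optimal transport, which is essentially the analysis of~\cite{dvurechensky2018computational}; in Appendix~\ref{app:computational} I would reproduce it, and here I sketch the structure. Writing $p=(p_i)_i$, $q=(q_j)_j$, introduce the dual objective
$$
D(u,v) \eqdef \langle u,p\rangle + \langle v,q\rangle - \lambda\Big(\sum_{i,j} e^{(u_i+v_j-c_{i,j})/\lambda}p_iq_j - 1\Big),
$$
so that by strong duality $\hat T_{\lambda,n} = 2\,D^\star$ where $D^\star \eqdef \max_{u,v} D(u,v)$. The first observation is that Sinkhorn's iteration~\eqref{eq:sinkhorniteration} is exactly alternating \emph{exact} maximization of $D$ over the two blocks $u$ and $v$, and that the $v$-update forces the associated plan $\gamma^{(k)}_{i,j} \eqdef e^{(u^{(k)}_i + v^{(k)}_j - c_{i,j})/\lambda}p_iq_j$ to have second marginal $q$, hence total mass $1$, so the penalty term in $D(u^{(k)},v^{(k)})$ vanishes and $D(u^{(k)},v^{(k)}) = \langle u^{(k)},p\rangle + \langle v^{(k)},q\rangle = \tfrac12\hat T^{(k)}_{\lambda,n}$. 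This already gives $\hat T^{(k)}_{\lambda,n}\le \hat T_{\lambda,n}$, and it remains to bound $e_k \eqdef D^\star - D(u^{(k)},v^{(k)}) \ge 0$ from above.

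First I would establish a priori bounds on the iterates: from~\eqref{eq:sinkhorniteration}, for $k\ge 1$ the oscillation satisfies $\max_i u^{(k)}_i - \min_i u^{(k)}_i \le \Vert c\Vert_\infty$, and similarly for $v^{(k)}$, because $u^{(k)}_i$ is a $\log$-sum-exp whose $i$-dependence enters only through the entries $c_{i,j}$, which range over an interval of length at most $\Vert c\Vert_\infty$. Using the translation invariance $(u,v)\mapsto(u+t\mathbf 1,v-t\mathbf 1)$ of $D$, I would pick a maximizer $(u^\star,v^\star)$ normalized like $(u^{(k)},v^{(k)})$, so that $\Vert u^{(k)}-u^\star\Vert_\infty \lesssim \Vert c\Vert_\infty$.

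Next I would relate $e_k$ to the first-marginal violation $r^{(k)}_i \eqdef \sum_j \gamma^{(k)}_{i,j}$. Since the second marginal is matched, $\nabla_v D(u^{(k)},v^{(k)}) = 0$ and $\nabla_u D(u^{(k)},v^{(k)}) = p - r^{(k)}$; concavity of $D$ then gives $e_k \le \langle \nabla D(u^{(k)},v^{(k)}),\, (u^\star,v^\star)-(u^{(k)},v^{(k)})\rangle = \langle p-r^{(k)},\, u^\star - u^{(k)}\rangle \lesssim \Vert c\Vert_\infty\, \Vert p - r^{(k)}\Vert_1$. On the other hand, a short direct computation shows that the $u$-update at step $k+1$ increases $D$ by exactly $\lambda\,\mathrm{KL}(p\,\Vert\, r^{(k)})$, which by Pinsker's inequality is at least $\tfrac\lambda2\Vert p-r^{(k)}\Vert_1^2$; hence $e_k - e_{k+1} \ge \tfrac\lambda2\Vert p-r^{(k)}\Vert_1^2 \gtrsim \lambda\, e_k^2/\Vert c\Vert_\infty^2$. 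Dividing by $e_ke_{k+1}$ and using $e_{k+1}\le e_k$ gives $1/e_{k+1} - 1/e_k \gtrsim \lambda/\Vert c\Vert_\infty^2$, whence $e_k \lesssim \Vert c\Vert_\infty^2/(\lambda k)$ and $\hat T_{\lambda,n} - \hat T^{(k)}_{\lambda,n} = 2e_k \lesssim \Vert c\Vert_\infty^2/(\lambda k)$.

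The main obstacle is purely one of constants: the clean bound $2\Vert c\Vert_\infty^2/(\lambda k)$ in the statement requires the sharp oscillation estimate $\Vert u^{(k)}-u^\star\Vert_\infty\le\Vert c\Vert_\infty$ together with exploiting \emph{both} half-steps of each iteration — the $v$-update contributes a further $\lambda\,\mathrm{KL}(q\,\Vert\,\tilde q^{(k)})$ decrease, with $\tilde q^{(k)}$ the current second marginal — rather than the crude telescoping above. Tracking the translation normalization carefully so that $\Vert u^{(k)}-u^\star\Vert_\infty$ is genuinely $O(\Vert c\Vert_\infty)$, and keeping these constants tight, is the only technical care needed; this refined accounting is exactly what is carried out in~\cite{dvurechensky2018computational}, which I would cite and adapt.
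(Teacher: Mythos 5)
Your proposal is correct and follows essentially the same route as the paper's Appendix~\ref{app:computational}: the dual objective, the observation that the half-step exactly matches one marginal so the value is $\langle u,p\rangle+\langle v,q\rangle$, the concavity/oscillation bound relating suboptimality to the $\ell_1$ marginal violation (Lemma~\ref{lem:boundsinkhorn}), the exact KL increase per half-step plus Pinsker, and the $1/e_k$ telescoping. The constant-tracking you flag as the remaining work is carried out in the paper exactly as you outline (with the midrange normalization giving oscillation $\le\Vert c\Vert_\infty$), and the half-step indexing in fact yields the stated constant with room to spare.
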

In particular, taking into account the fact that each iteration requires $O(n^2)$ arithmetic operations, Sinkhorn's algorithm returns an $\epsilon$-accurate estimation of $\hat T_{\lambda,n}$ in time $O(n^2 \Vert c\Vert_\infty^2 /(\lambda \epsilon))$. Moreover, if $\alpha>0$ is such that $p_i, q_j \geq \alpha/n$, we have the approximation bound $\vert \hat T_{\lambda,n} - \hat T_{0,n}\vert \leq 4\lambda \log(n/\alpha)$ which follows by bounding the relative entropy of admissible transport plans~\cite{altschuler2017near}. By fixing $\lambda = \epsilon/4(\log(n/\alpha))$, we thus obtain an $\epsilon$-accurate estimation of $\hat T_{0,n}$ in $O(n^2 \log(n/\alpha) \Vert c\Vert_\infty^2/\epsilon^2)$ operations.
As a consequence, by combining Theorem~\ref{thm:plug-in} and Proposition~\ref{prop:sinkhornscomputational}, we can thus give the following computational complexity bound to estimate $W_2^2(\mu,\nu)$ given random samples that takes into account the number of samples and the regularization level required to reach a certain accuracy.
\begin{proposition}\label{prop:plugincomputational}
Assume that $\mu,\nu$ are supported on a set of diameter $1$. Using $\hat T^{(k)}_{\lambda,n}$, an $\epsilon$-accurate estimation of $W_2^2(\mu,\nu)$ is achieved with probability $1-\delta$ in $\tilde O(\epsilon^{-\max\{6,d+2\}})$ operations, where $\tilde O$ hides poly-log factors in $1/\epsilon$ and $1/\delta$.
\end{proposition}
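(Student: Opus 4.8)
The strategy is to view $\hat T^{(k)}_{\lambda,n}$ as an estimator of $W_2^2(\mu,\nu)$ affected by three errors — a statistical error from using $n$ samples, an entropic bias from the regularization $\lambda>0$, and a Sinkhorn optimization error from running only $k$ iterations — to balance each at level $\epsilon/3$, and then read off the total arithmetic cost $O(n^2k)$ for the resulting choices of $n$, $\lambda$, $k$. Since $\hat T_{0,n}=T_0(\hat\mu_n,\hat\nu_n)=W_2^2(\hat\mu_n,\hat\nu_n)$, the triangle inequality gives
\begin{equation*}
\big\vert \hat T^{(k)}_{\lambda,n} - W_2^2(\mu,\nu)\big\vert \leq \big\vert \hat T^{(k)}_{\lambda,n} - \hat T_{\lambda,n}\big\vert + \big\vert \hat T_{\lambda,n} - \hat T_{0,n}\big\vert + \big\vert W_2^2(\hat\mu_n,\hat\nu_n) - W_2^2(\mu,\nu)\big\vert ,
\end{equation*}
so it suffices to make each summand at most $\epsilon/3$, the last one only with probability $1-\delta$.

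For the statistical term I would invoke Theorem~\ref{thm:plug-in}: its expectation is $\lesssim n^{-2/d}$ for $d>4$ and $\lesssim n^{-1/2}\log n$ for $d\leq 4$, so a sample size $n\gtrsim \epsilon^{-\max\{d/2,2\}}$ (up to poly-log factors) forces $\Esp\big[\vert W_2^2(\hat\mu_n,\hat\nu_n)-W_2^2(\mu,\nu)\vert\big]\leq \epsilon/6$; the sub-Gaussian concentration bound of the same theorem with $t=\epsilon/6$ then makes the deviation above $\epsilon/6$ have probability at most $2e^{-n\epsilon^2/36}$, which is $\leq \delta$ once $n\gtrsim \epsilon^{-2}\log(1/\delta)$. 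Taking $n=\tilde O(\epsilon^{-\max\{d/2,2\}})$ (hiding poly-logs in $1/\epsilon$ and $1/\delta$) thus controls the statistical term by $\epsilon/3$ with probability $1-\delta$. For the entropic bias, the weights of $\hat\mu_n$ and $\hat\nu_n$ all equal $1/n$, so the bound recalled just before the statement ($\vert \hat T_{\lambda,n}-\hat T_{0,n}\vert \leq 4\lambda\log(n/\alpha)$ with $\alpha=1$) is $\leq \epsilon/3$ for $\lambda\asymp \epsilon/\log n$, and since $n$ is polynomial in $1/\epsilon$ this means $\lambda = \tilde O(\epsilon)$. For the optimization error, the diameter-$1$ assumption gives $\Vert c\Vert_\infty\leq 1/2$, so Proposition~\ref{prop:sinkhornscomputational} yields $\vert \hat T^{(k)}_{\lambda,n}-\hat T_{\lambda,n}\vert \leq 1/(2\lambda k)$, which is $\leq \epsilon/3$ as soon as $k\gtrsim 1/(\lambda\epsilon)=\tilde O(\epsilon^{-2})$.

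Combining the three bounds, $\hat T^{(k)}_{\lambda,n}$ is $\epsilon$-accurate with probability $1-\delta$, and the total cost is $O(n^2k)=\tilde O\big(\epsilon^{-2\max\{d/2,2\}}\cdot\epsilon^{-2}\big)=\tilde O\big(\epsilon^{-\max\{d+2,6\}}\big)$, using $2\max\{d/2,2\}+2=\max\{d+2,6\}$. The only step that is not purely mechanical is the choice of $n$: one must combine the dimension-dependent approximation rate of Theorem~\ref{thm:plug-in} with the dimension-free concentration rate $n^{-1/2}$ and check that the binding requirement is $n\gtrsim\epsilon^{-\max\{d/2,2\}}$, the dimension-dependent constant hidden by $\lesssim$ affecting only the constant (not the exponent) in the final count since $d$ is fixed. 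I do not anticipate any genuine obstacle beyond this bookkeeping.
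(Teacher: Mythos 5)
Your proof is correct and follows essentially the same route as the paper: the same error decomposition into Sinkhorn optimization error, entropic bias on the empirical measures, and plug-in statistical error (expectation plus concentration via Theorem~\ref{thm:plug-in}), with the same choices $n=\tilde O(\epsilon^{-\max\{d/2,2\}})$, $\lambda\asymp\epsilon/\log n$, $k=\tilde O(\epsilon^{-2})$ and the same final count $\tilde O(n^2k)=\tilde O(\epsilon^{-\max\{d+2,6\}})$. The only cosmetic difference is that the paper splits the analysis into the cases $d<4$, $d=4$, $d>4$, which you handle uniformly.
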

\begin{proof}[Proof idea]
We write $W_2^2\eqdef W_2^2(\mu,\nu)$, $\hat W_2^2\eqdef W_2^2(\hat \mu_n,\hat \nu_n)$ and consider the error decomposition
$$
\vert \hat T^{(k)}_{\lambda,n} - W_2^2\vert \leq 
\vert \hat T^{(k)}_{\lambda,n} - \hat T_{\lambda,n}\vert+\vert \hat T_{\lambda,n} - \hat W_2^2\vert + \vert \hat W_2^2 - \Esp [\hat W_2^2]\vert + \Esp \vert \hat W_2^2 - W_2^2]\vert
$$
where each term has been bounded in the previous discussion, see details in Appendix~\ref{app:plugin}.
\end{proof}

\subsection{Performance of the Sinkhorn divergence estimator given random samples}\label{sec:Srandom}
\paragraph{Statistical performance.}
Let us now turn to our object of interest which is the Sinkhorn divergence estimator $\hat S_{\lambda,n} \eqdef S_{\lambda}(\hat \mu_n,\hat \nu_n)$, defined from $n$ independent samples from $\mu$ and $\nu$. We note that all the results in this section also apply to the estimator $ T_{\lambda}(\hat \mu_n,\hat\nu_n) - (T_\lambda(\hat \mu_{n/2},\hat \mu_{n/2}')+T_\lambda(\hat \nu_{n/2},\hat \nu'_{n/2}))/2$ where $\hat \mu_{n/2}$ (resp.~$\hat \mu'_{n/2}$) is the empirical distribution of the first (resp.~second) half samples from $\mu$ (assuming $n$ even for conciseness), which is a natural alternative definition. The following result gives the expected error of the estimator $\hat S_{\lambda,n}$.
\begin{proposition}\label{prop:samplesinkhorn}
Let $\mu,\nu$ be supported on a set of diameter $1$ and assume that $\vert S_\lambda(\mu,\nu)-W_2^2(\mu,\nu)\vert \leq \lambda^2I$ for some $I>0$ (see guarantees in Section~\ref{sec:approximation}). Then, with the choice $\lambda = n^{\frac{-1}{d'+4}}$, it holds
$$
\Esp \big[ \vert \hat S_{\lambda,n} -  W_2^2(\mu,\nu)\vert\big]\lesssim n^{\frac{-2}{d'+4}}.
$$
where $d'=2\lfloor d/2\rfloor$ and $\lesssim$ hides a constant depending only on $I$ and $d$. Also, this estimator concentrates well around its expectation: for all $t,\lambda\geq 0$,
$
\mathbf{P}\Big[ \vert \hat S_{\lambda,n} -\Esp[ \hat S_{\lambda,n}]\vert \geq t \Big]\leq 2\exp(-nt^2/4).
$
\end{proposition}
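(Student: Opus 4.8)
The plan is to split the estimation error into an approximation (bias) term and a statistical term, controlling each separately and then optimizing over $\lambda$. Concretely, I would write
$$
\Esp\big[\vert \hat S_{\lambda,n} - W_2^2(\mu,\nu)\vert\big] \leq \underbrace{\vert \Esp[\hat S_{\lambda,n}] - S_\lambda(\mu,\nu)\vert}_{\text{sampling bias}} + \underbrace{\Esp\vert \hat S_{\lambda,n} - \Esp[\hat S_{\lambda,n}]\vert}_{\text{fluctuations}} + \underbrace{\vert S_\lambda(\mu,\nu) - W_2^2(\mu,\nu)\vert}_{\text{approximation error}}\,.
$$
The last term is bounded by $\lambda^2 I$ by hypothesis. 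The middle term is controlled by the concentration inequality stated at the end of the proposition (which itself follows from a bounded-differences/McDiarmid argument: replacing one sample changes each $T_\lambda(\hat\mu_n,\hat\nu_n)$, $T_\lambda(\hat\mu_n,\hat\mu_n)$, etc. by $O(1/n)$ since the cost is bounded on a set of diameter $1$ — this needs a Lipschitz-in-one-coordinate bound on $T_\lambda$, valid because the optimal dual potentials are bounded), giving $\Esp\vert \hat S_{\lambda,n} - \Esp[\hat S_{\lambda,n}]\vert \lesssim n^{-1/2}$.

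The heart of the matter is the first term, the sampling bias of the entropic cost. Here I would invoke the known sample-complexity results for the Sinkhorn divergence / entropic optimal transport functional (as in Genevay et al.\ and Mena–Niles-Weed): for fixed $\lambda>0$ and measures supported on a bounded set, $\vert \Esp[T_\lambda(\hat\mu_n,\hat\nu_n)] - T_\lambda(\mu,\nu)\vert$ and the analogous self-terms are bounded by a quantity of the form $C(d)\,e^{\kappa/\lambda}/\sqrt{n}$ or, more usefully, by $C(d)\,\lambda^{-\lfloor d/2\rfloor}/\sqrt{n}$ up to lower-order corrections, using that the optimal dual potentials lie in a bounded set of a smoothness class whose covering numbers scale like $\lambda^{-\lfloor d/2 \rfloor}$. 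Writing $d' = 2\lfloor d/2\rfloor$, this yields sampling bias $\lesssim \lambda^{-d'/2} n^{-1/2}$. (I would cite the precise statement rather than reprove it; the point of isolating $d'$ is that the potentials' relevant derivatives stabilize once one differentiates $\lfloor d/2\rfloor$ times in each of two halves of the variables.)

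Combining the three pieces,
$$
\Esp\big[\vert \hat S_{\lambda,n} - W_2^2(\mu,\nu)\vert\big] \lesssim \lambda^2 I + \lambda^{-d'/2} n^{-1/2} + n^{-1/2}\,.
$$
Balancing the first two terms, $\lambda^2 \asymp \lambda^{-d'/2} n^{-1/2}$ gives $\lambda^{(d'+4)/2} \asymp n^{-1/2}$, i.e.\ $\lambda \asymp n^{-1/(d'+4)}$, and then both terms are of order $n^{-2/(d'+4)}$, which dominates the $n^{-1/2}$ fluctuation term since $d' \geq 0$. This is exactly the claimed rate, and the constant depends only on $I$ and $d$ as asserted.

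The main obstacle is obtaining the sharp $\lambda^{-d'/2}$ (rather than exponential-in-$1/\lambda$) dependence in the sampling bias of the entropic terms uniformly over the relevant class of measures; this requires either importing the refined results on the smoothness and boundedness of Sinkhorn potentials on compact sets and the associated metric-entropy bounds, or reproving them, and care is needed because $\lambda$ is allowed to depend on $n$ — one must check the bounds are uniform in $\lambda$ over the range $\lambda \gtrsim n^{-1/(d'+4)}$. The concentration bound and the approximation bound are comparatively routine, the latter being exactly the content of Theorem~\ref{th:bias} and its corollaries.
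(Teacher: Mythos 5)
Your proposal is correct and follows essentially the same route as the paper: the paper's proof uses the two-term decomposition $\Esp\vert \hat S_{\lambda,n}-W_2^2\vert \leq \Esp\vert \hat S_{\lambda,n}-S_\lambda\vert + \vert S_\lambda - W_2^2\vert$, bounds the first term by $(1+\lambda^{-d'/2})n^{-1/2}$ via exactly the Genevay et al.\ / Mena--Niles-Weed argument you cite (Sobolev-ball control of the dual potentials, stated as Lemma~\ref{lem:samplecomplexityentropy}), balances against $\lambda^2 I$ to get $\lambda \asymp n^{-1/(d'+4)}$, and proves concentration by the same bounded-differences argument. Your further splitting of the statistical term into bias plus fluctuations is harmless but unnecessary, since the cited results bound $\Esp\vert \hat S_{\lambda,n}-S_\lambda\vert$ directly and uniformly in $\lambda$.
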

Observe that when $d$ is large, the exponent $-2/(d'+4)$ is equivalent to $-2/d$ which is the rate of the plug-in estimator as shown in Theorem~\ref{thm:plug-in}. However, except for $d=1$, this exponent is slightly worse and we believe that this is due to a weakness in our bound. In fact, in our numerical experiments we observe that $\hat S_{\lambda,n}$ is in fact more statistically efficient than the plug-in estimator (cf.~Figure~\ref{fig:sampling}).

\paragraph{Computational performance.} An ideal theoretical goal would be to exhibit a computational advantage for using $\hat S_{\lambda,n}$ in the sense of Proposition~\ref{prop:plugincomputational}, but unfortunately the statistical bound in Proposition~\ref{prop:samplesinkhorn} is not strong enough to allow for such a result. Still, there is a clear computational advantage in using $\hat S_{\lambda,n}$ which is that to attain an accuracy $\epsilon$, it requires a regularization level $\lambda$ of order $\epsilon^{1/2}$ instead of $\epsilon$ for the plug-in estimator. This advantage can be formalized as follows, where $\hat S^{(k)}_{\lambda,n}$ is the estimation of $\hat S_{\lambda,n}$ obtained after $k$ Sinkhorn's iterations.
\begin{proposition}\label{prop:timecomplexitySinkhorn}
Under the assumptions of Proposition~\ref{prop:samplesinkhorn}, an $\epsilon$-accurate estimation of $W_2^2(\mu,\nu)$ can be obtained with probability $1-\delta$ in $\tilde O(\epsilon^{-(d'+5.5)})$ computations via $\hat S^{(k)}_{\lambda,n}$ where $d'=2\lfloor d/2\rfloor$ and $\tilde O$ hides a poly-log factor in $1/\delta$. Given $n$ samples, both estimators can achieve with probability $1-\delta$ an accuracy $\epsilon \asymp n^{-2/(d'+4)}$, but in time $\tilde O(n^2\epsilon^{-1.5})$ via $\hat S^{(k)}_{\lambda,n}$ and in time $\tilde O(n^2\epsilon^{-2})$ via $T^{(k)}_{\lambda,n}$.
\end{proposition}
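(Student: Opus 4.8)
The plan is to combine the statistical bound of Proposition~\ref{prop:samplesinkhorn}, the approximation bound $\vert S_\lambda(\mu,\nu)-W_2^2(\mu,\nu)\vert\leq \lambda^2 I$, and a Sinkhorn-iteration-count analysis analogous to Proposition~\ref{prop:sinkhornscomputational} but applied to the three entropic costs entering $S_\lambda$. As in the proof idea for Proposition~\ref{prop:plugincomputational}, I would start from an error decomposition
$$
\vert \hat S^{(k)}_{\lambda,n} - W_2^2\vert \leq \vert \hat S^{(k)}_{\lambda,n} - \hat S_{\lambda,n}\vert + \vert \hat S_{\lambda,n} - \Esp[\hat S_{\lambda,n}]\vert + \Esp\big[\vert \hat S_{\lambda,n} - W_2^2\vert\big],
$$
where $W_2^2 \eqdef W_2^2(\mu,\nu)$. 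The last term is $O(n^{-2/(d'+4)})$ by Proposition~\ref{prop:samplesinkhorn} (which already optimizes over $\lambda \asymp n^{-1/(d'+4)}$, itself balancing the sampling error against $\lambda^2 I$). The middle term is $O(\sqrt{\log(1/\delta)/n})$ with probability $1-\delta$ by the sub-Gaussian concentration bound in Proposition~\ref{prop:samplesinkhorn}, which is dominated by the first term since $d'\geq 0$. So it remains to control the optimization error $\vert \hat S^{(k)}_{\lambda,n} - \hat S_{\lambda,n}\vert$.

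For the optimization error, I would write $\hat S_{\lambda,n} = \hat T_{\lambda,n}^{\mu\nu} - \tfrac12(\hat T_{\lambda,n}^{\mu\mu} + \hat T_{\lambda,n}^{\nu\nu})$ and run Sinkhorn's iterations on each of the three problems, so that $\hat S^{(k)}_{\lambda,n}$ is the corresponding combination of the three approximate costs. By Proposition~\ref{prop:sinkhornscomputational} applied to each term, with $\Vert c\Vert_\infty \leq 1/2$ since the diameter is $1$, we get $\vert \hat S^{(k)}_{\lambda,n} - \hat S_{\lambda,n}\vert \leq c_0/(\lambda k)$ for an absolute constant $c_0$; so $k \asymp 1/(\lambda\epsilon)$ iterations, each costing $O(n^2)$ operations, suffice to push this term below $\epsilon$. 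To get the first displayed complexity bound, set the target accuracy $\epsilon$, choose $n \asymp \epsilon^{-(d'+4)/2}$ so that the statistical term is $\lesssim \epsilon$, choose $\lambda \asymp n^{-1/(d'+4)} \asymp \epsilon^{1/2}$ as in Proposition~\ref{prop:samplesinkhorn}, and then $k \asymp 1/(\lambda \epsilon) \asymp \epsilon^{-3/2}$; the total cost is $O(n^2 k) = \tilde O(\epsilon^{-(d'+4)}\cdot \epsilon^{-3/2}) = \tilde O(\epsilon^{-(d'+5.5)})$, with the poly-log in $1/\delta$ absorbed through the concentration term. For the second claim, fix $n$, take $\epsilon \asymp n^{-2/(d'+4)}$ (the best accuracy the $n$ samples afford), and observe $\lambda \asymp n^{-1/(d'+4)} \asymp \epsilon^{1/2}$, so $k \asymp 1/(\lambda\epsilon) \asymp \epsilon^{-3/2}$ and the runtime is $\tilde O(n^2 \epsilon^{-1.5})$; the contrast with $T^{(k)}_{\lambda,n}$ comes from the fact that the plug-in estimator needs $\lambda \asymp \epsilon/\log n$ (its bias is only $O(\lambda\log(1/\lambda))$ rather than $O(\lambda^2)$), giving $k \asymp 1/(\lambda\epsilon) \asymp \epsilon^{-2}$ up to logs and hence $\tilde O(n^2\epsilon^{-2})$.

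The only genuinely delicate point is the bookkeeping that ensures the poly-log factors (from the concentration term, and from any $\log n$ entering $\Vert c\Vert_\infty$-type estimates or the choice of $k$) are correctly swept into $\tilde O$, and that the three-term error split for the debiased quantity does not lose a factor: since $S_\lambda$, $T_\lambda(\mu,\mu)$ and $T_\lambda(\nu,\nu)$ all live on the same bounded support, the per-iteration guarantee of Proposition~\ref{prop:sinkhornscomputational} applies uniformly, so summing three such bounds only changes $c_0$ by a constant. Everything else is a routine substitution of the rates from Proposition~\ref{prop:samplesinkhorn} into the cost $O(n^2 k)$ with $k \asymp (\lambda\epsilon)^{-1}$; the full details are deferred to Appendix~\ref{app:plugin}.
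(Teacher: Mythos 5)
Your proposal is correct and follows essentially the same route as the paper: the same error decomposition (you merge the last two terms via Jensen, the paper keeps $\Esp\vert \hat S_{\lambda,n}-S_\lambda\vert$ and $\vert S_\lambda - W_2^2\vert$ separate), the same parameter choices $\lambda\asymp\epsilon^{1/2}$, $n\asymp\epsilon^{-(d'+4)/2}$ up to logs, and the same $O(n^2/(\lambda\epsilon))$ Sinkhorn cost; your explicit remark that the per-iteration guarantee applies to each of the three entropic costs in $\hat S_{\lambda,n}$ is a detail the paper leaves implicit. The only slips are cosmetic: the concentration term is dominated by the \emph{statistical} term (not the ``first'' term), and the plug-in bias is $O(\lambda\log n)$ rather than $O(\lambda\log(1/\lambda))$, neither of which affects the argument.
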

\begin{proof}[Proof idea]
For $\hat T^{(k)}_{\lambda,n}$, we consider the error decomposition of Proposition~\ref{prop:plugincomputational}, while for $\hat S^{(k)}_{\lambda,n}$, we write
$$
\vert \hat S^{(k)}_{\lambda,n} - W_2^2\vert \leq \vert  \hat S^{(k)}_{\lambda,n} -  \hat S_{\lambda,n}\vert +
\vert \hat S_{\lambda,n} - \Esp [\hat S_{\lambda,n}] \vert + 
\Esp \vert \hat S_{\lambda,n} - S_{\lambda}\vert +
\vert S_{\lambda} - W_2^2\vert.
$$
The key difference with the decomposition in the proof of Proposition~\ref{prop:plugincomputational} is that the error induced by the entropic regularization is bounded on the population quantities instead of the empirical ones. These terms have been bounded in the previous discussion, see details in Appendix~\ref{app:sinkhorndivergencesstat}.
\end{proof}

\subsection{Performance of the Sinkhorn divergence estimator given densities discretized on grids} \label{sec:discretized}
In this section, we consider the case where the marginals $\mu$ and $\nu$ are not randomly sampled, but instead are accessed via their discretized densities which is the common situation in imaging sciences. We show a stability property of the entropy regularized optimal transport which leads to improved error bounds compared to the plug-in estimator.

For simplicity, we consider measures on the $d$ dimensional torus $\TT^d=(\RR/\ZZ)^d$ with its usual distance denoted by $\Vert [x-y]\Vert_2$. For a measure $\mu \in \Pp(\TT^d)$ its discretization $\mu_h$ at resolution $h=1/m$ for an integer $m$ is the discrete measure with $n=m^d$ atoms supported on the regular grid $(\ZZ/m\ZZ)^d$ which gives to each point the mass of $\mu$ on its surrounding cell.
The following approximation result suggests that regularizing the optimal transport problem increases the stability under such a discretization.
\begin{proposition}[Stability under discretization]\label{prop:grid}
Assume that $\mu,\nu \in \Pp(\TT^d)$ admit $M$-Lipschitz continuous log-densities and let $C>0$ be any constant. If  $h(M+\lambda^{-1})\leq C$ then
\begin{align*}
\vert  T_\lambda(\mu_h,\nu_h)-T_\lambda(\mu,\nu)\vert \lesssim \min\{h, h^2(\lambda^{-1} +M+1)\}
\end{align*}
where $\lesssim$ hides constants that only depend on $d$ and $C$.
\end{proposition}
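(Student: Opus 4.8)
The plan is to compare the two entropic transport problems $T_\lambda(\mu_h,\nu_h)$ and $T_\lambda(\mu,\nu)$ by exhibiting near-optimal plans for each that are obtained from the optimal plan of the other via a coupling of $\mu$ with $\mu_h$ and $\nu$ with $\nu_h$. Let me think about what we need.

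So we want $|T_\lambda(\mu_h,\nu_h) - T_\lambda(\mu,\nu)| \lesssim \min\{h, h^2(\lambda^{-1}+M+1)\}$ under $h(M+\lambda^{-1}) \le C$.

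The key tool: $\mu_h$ is obtained from $\mu$ by a "quantization" map — each point $x$ is sent to the center of its grid cell. Call this map $Q$ (deterministic, measurable, with $Q_\# \mu = \mu_h$, and $\|Q(x) - x\| \le \sqrt{d} h/2$ roughly, actually $\le \sqrt{d}h$). Similarly for $\nu$.

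Actually wait — I should think about the two directions of the bound separately.

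**Direction: $T_\lambda(\mu_h, \nu_h) \le T_\lambda(\mu,\nu) + \text{error}$.** Take $\gamma$ optimal for $T_\lambda(\mu,\nu)$. Push it forward by $(Q, Q')$ to get $\tilde\gamma = (Q \otimes Q')_\# \gamma \in \Pi(\mu_h, \nu_h)$. Then:
- Cost term: $\int \|y'-x'\|^2 d\tilde\gamma = \int \|Q'(y) - Q(x)\|^2 d\gamma$. Since $\|Q'(y)-Q(x)\| \le \|y-x\| + \|Q(x)-x\| + \|Q'(y)-y\| \le \|y-x\| + \sqrt d h$, expanding the square gives $\le \int \|y-x\|^2 d\gamma + 2\sqrt d h \int\|y-x\| d\gamma + d h^2 \le (\text{cost of }\gamma) + O(h)$ since on the torus $\|y-x\| \le \sqrt d /2$ is bounded.
- Entropy term: here's the subtlety. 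We need $H(\tilde\gamma, \mu_h \otimes \nu_h)$. This is discrete entropy. We need to compare it to $H(\gamma, \mu\otimes\nu)$. This is where the discretization really matters: pushing forward by $(Q,Q')$ can only decrease entropy relative to the product? No — data processing. Actually $H(\tilde\gamma, \mu_h\otimes\nu_h) \le H(\gamma, \mu\otimes\nu)$ by the data processing inequality for relative entropy (KL is non-increasing under the map $(Q,Q')$ applied to both $\gamma$ and $\mu\otimes\nu$, noting $(Q\otimes Q')_\#(\mu\otimes\nu) = \mu_h\otimes\nu_h$).

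So this direction gives $T_\lambda(\mu_h,\nu_h) \le T_\lambda(\mu,\nu) + O(h)$. That's the easy half and gives the $O(h)$ part of the min in one direction.

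**Direction: $T_\lambda(\mu,\nu) \le T_\lambda(\mu_h,\nu_h) + \text{error}$.** Take $\gamma_h$ optimal for $T_\lambda(\mu_h,\nu_h)$. Now I need to "lift" it to a plan in $\Pi(\mu,\nu)$. Use disintegration: since $Q_\#\mu = \mu_h$, we can write $\mu = \int \mu^z \, d\mu_h(z)$ where $\mu^z$ is $\mu$ restricted to (and normalized on) cell $z$. Similarly $\nu = \int \nu^{z'} d\nu_h(z')$. Then define $\gamma = \int \mu^z \otimes \nu^{z'} \, d\gamma_h(z,z')$. This is in $\Pi(\mu,\nu)$.
- Cost: similar to before, $\le (\text{cost of }\gamma_h) + O(h)$.
- Entropy: $H(\gamma, \mu\otimes\nu)$. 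Hmm, $\mu\otimes\nu = \int \mu^z\otimes\nu^{z'} d(\mu_h\otimes\nu_h)(z,z')$... Need a chain-rule / gluing argument. The relative entropy decomposes: $H(\gamma, \mu\otimes\nu) = H(\gamma_h, \mu_h\otimes\nu_h) + \int H(\mu^z\otimes\nu^{z'}, \mu^z\otimes\nu^{z'}) d\gamma_h = H(\gamma_h,\mu_h\otimes\nu_h) + 0$. Wait, but that's only if the conditional distribution of $\gamma$ given cell $(z,z')$ is exactly $\mu^z\otimes\nu^{z'}$ which matches — yes by construction. And the conditional of $\mu\otimes\nu$ given $(z,z')$ is also $\mu^z\otimes\nu^{z'}$. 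So the conditional relative entropy is zero and $H(\gamma,\mu\otimes\nu) = H(\gamma_h,\mu_h\otimes\nu_h)$ exactly! Great — so this direction also works and gives $O(h)$.

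So the $O(h)$ bound on $|T_\lambda(\mu_h,\nu_h) - T_\lambda(\mu,\nu)|$ is relatively clean. The **hard part** is the $h^2(\lambda^{-1}+M+1)$ bound, which is better when $h(\lambda^{-1}+M) \lesssim 1$. Getting $h^2$ instead of $h$ requires exploiting cancellation. The crude estimate $\|Q'(y)-Q(x)\| \le \|y-x\| + \sqrt d h$ loses a factor — the displacement $Q(x) - x$ is centered within each cell (its conditional mean under $\mu^z$ is NOT zero in general because the density isn't constant on the cell, but it's $O(h^2)$ off from the centroid if the log-density is $M$-Lipschitz... hmm, actually the centroid of $\mu^z$ differs from the cell center by $O(Mh \cdot h) = O(Mh^2)$). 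So we should quantize to *centroids* rather than *cell centers*? But the proposition says grid points. Let me reconsider: the error $\int 2\langle y-x, (Q'(y)-y) - (Q(x)-x)\rangle d\gamma$ — the linear term — is where we need the $h^2$.

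For the $h^2$ improvement I would: (i) use the dual formulation. Write $T_\lambda(\mu,\nu) = \max_{f,g} \int f d\mu + \int g d\nu - \lambda \int (e^{(f\oplus g - c)/\lambda} - 1) d(\mu\otimes\nu)$ (Schrödinger dual). The optimal potentials $f_\lambda, g_\lambda$ for the *regularized* problem are Lipschitz — in fact, since $c$ is $\sqrt d$-Lipschitz-ish and bounded, and the Sinkhorn fixed-point equations give $f = -\lambda\log\int e^{(g-c)/\lambda}$, one can show $f_\lambda$ is $\mathrm{Lip}(c) = O(\sqrt d)$-Lipschitz and moreover has bounded second derivatives with a bound like $O(\lambda^{-1} + 1)$ (from differentiating the soft-min twice — the Hessian of a log-sum-exp of $c$ picks up $\lambda^{-1}$ times a variance term). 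Then $T_\lambda(\mu_h,\nu_h) - T_\lambda(\mu,\nu) = (\int f_\lambda d\mu_h - \int f_\lambda d\mu) + (\ldots) + \text{second-order dual terms}$; since $\mu_h$ is the cell-average of $\mu$, $\int f_\lambda d\mu_h - \int f_\lambda d\mu = \int (f_\lambda(Q(x)) - f_\lambda(x)) d\mu(x)$, and a Taylor expansion with the second-derivative bound — combined with the fact that $Q(x)$ averages to near the centroid over each cell up to the log-density tilt — gives $O((\lambda^{-1}+M+1) h^2)$. The condition $h(M+\lambda^{-1})\le C$ ensures the Taylor remainders and the suboptimality cross-terms stay controlled so they don't blow up.

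The main obstacle I anticipate is making the second-order dual expansion rigorous: one must either (a) establish the $C^{1,1}$ or $C^2$ regularity of the Sinkhorn potentials with explicit $\lambda^{-1}$-dependence and control how the (sub)optimality in the dual propagates when we plug $\mu_h$ for $\mu$, or (b) do the primal gluing argument above more carefully, tracking that the first-order term $\int\langle y-x, \Delta(x)\rangle d\gamma$ (with $\Delta$ the quantization displacement) can be re-expressed, using optimality of $\gamma$ (its conditionals are tilted by $e^{-c/\lambda}$-type factors, contributing the $\lambda^{-1}$), as something of size $h^2(\lambda^{-1}+M+1)$ rather than $h$. I would pursue the dual route (a) as cleaner, leaning on standard a priori estimates for Sinkhorn potentials on compact domains, and relegate the regularity lemma (the $\lambda^{-1}$-dependent bound on $\nabla^2 f_\lambda$) to the appendix.
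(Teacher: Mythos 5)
Your two primal constructions are exactly the paper's: the pushforward $(q_h\otimes q_h)_\#\gamma$ with the data-processing inequality for one direction, and the cell-wise product lifting (constant conditional density, so the relative entropy is preserved exactly) for the other. Together with the crude Lipschitz bound on the cost these give the $O(h)$ estimate correctly. You also correctly identify the centering mechanism for the marginals: the cell center is the Lebesgue centroid, so $\bigl\vert\int_{Q_i}(x-x_i)\,\mu(x)\,\dd x\bigr\vert\lesssim Mh^2\mu(Q_i)$ once $\log\mu$ is $M$-Lipschitz, which is how the paper gets the $h^2(1+M)$ bound for the lifting direction.

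The gap is that the $h^2(\lambda^{-1}+M+1)$ bound --- the actual content of the proposition, and the only part used downstream in Proposition~\ref{prop:gridperf} --- is left as a plan. The ingredient you are missing is a regularity estimate on the \emph{optimal entropic plan} $p_\lambda$ rather than on the dual potentials: the paper proves (Lemma~\ref{lem:regularityplan}) that $\Vert\nabla\log p_\lambda\Vert_2\le 4\sqrt d\,\lambda^{-1}+2M$ by differentiating the Sinkhorn fixed-point equations ($\nabla u_\lambda$ is a conditional average of $[x-y]$, hence bounded by $\sqrt d$), and then Gr\"onwall gives $\vert p_\lambda(z)-p_\lambda(z')\vert\le(e^{(4\sqrt d\lambda^{-1}+M)\Vert[z-z']\Vert_2}-1)p_\lambda(z)$. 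Feeding this into the same centering argument you already used for the marginals kills the first-order term of the cost expansion in the pushforward direction and yields $h^2(\lambda^{-1}+M)$; the hypothesis $h(M+\lambda^{-1})\le C$ is what linearizes the exponential. Your preferred dual route (a) is not a free lunch: the Schr\"odinger dual's penalty term is precisely the mass of $e^{(f\oplus g-c)/\lambda}$ against $\mu\otimes\nu$ versus $\mu_h\otimes\nu_h$, so its cell-wise variation is again governed by $\nabla\log p_\lambda$ and you would need the same lemma; moreover a bound on $\nabla^2 f_\lambda$ runs into the non-differentiability of $\tfrac12\Vert[y-x]\Vert_2^2$ at the cut locus of the torus, which the first-order primal argument avoids. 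As written, the proposal proves only the $O(h)$ half.
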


This bound implies an error of order $h^2$ for the entropy regularized problem while it is not known whether such a bound is possible for $\lambda=0$, where a naive analysis suggests a bound of order $h$. When combined with the approximation error and the analysis of Sinkhorn's iterations, this yields the following performance guarantees for $S_\lambda(\mu_h,\nu_h)$ as defined in Eq.~\eqref{eq:sinkhorndivergence}.

\begin{proposition}\label{prop:gridperf}
Assume that $\mu,\nu \in \Pp(\TT^d)$ admit Lipschitz continuous log-densities and that $I_0(\mu,\nu) $ is finite. We can estimate $W_2^2(\mu,\nu)$ to $\epsilon$-accuracy:
\begin{itemize}[label={--}]
\item with $T_\lambda(\mu_h,\nu_h)$ in time $\tilde O(\epsilon^{-(2d+2)})$ by setting $h \asymp \epsilon$ and $\lambda \asymp \epsilon/\log(1/\epsilon)$,
\item with $S_\lambda(\mu_h,\nu_h)$ in time $O(\epsilon^{-(3d/2+3/2)})$ by setting  $h \asymp \epsilon^{3/4}$ and $\lambda \asymp \epsilon^{1/2}$.
\end{itemize}
\end{proposition}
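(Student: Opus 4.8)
The plan is to assemble the final time-complexity bounds from three ingredients already available in the excerpt: the approximation error of the (debiased or undebiased) regularized cost with respect to $W_2^2(\mu,\nu)$, the discretization stability of Proposition~\ref{prop:grid}, and the iteration complexity of Sinkhorn's algorithm from Proposition~\ref{prop:sinkhornscomputational}. For each of the two estimators I would set up an error decomposition, choose $h$ and $\lambda$ to balance the terms at level $\epsilon$, and then read off the number of arithmetic operations as a function of $\epsilon$.

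First I would treat the undebiased estimator $T_\lambda(\mu_h,\nu_h)$. The decomposition is
\[
\vert \hat T^{(k)}_{\lambda,n} - W_2^2(\mu,\nu)\vert \leq \vert \hat T^{(k)}_{\lambda,n} - T_\lambda(\mu_h,\nu_h)\vert + \vert T_\lambda(\mu_h,\nu_h) - T_\lambda(\mu,\nu)\vert + \vert T_\lambda(\mu,\nu) - W_2^2(\mu,\nu)\vert.
\]
The third term is $O(\lambda\log(1/\lambda))$ by the classical entropic bias bound (the same $4\lambda\log(n/\alpha)$-type estimate used for the plug-in discussion, here on the population measures on the torus so that $\log n$ is replaced by a constant), which forces $\lambda \asymp \epsilon/\log(1/\epsilon)$. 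The second term is $O(h)$ by the first (crude) branch of Proposition~\ref{prop:grid}, or one could use the $h^2(\lambda^{-1}+M+1)$ branch; with $\lambda\asymp\epsilon/\log(1/\epsilon)$ the bound $h^2\lambda^{-1}\lesssim\epsilon$ also yields $h\asymp\epsilon$ up to log factors, so either way $h\asymp\epsilon$ is the right choice and then $n=h^{-d}\asymp\epsilon^{-d}$. The first term is $O(\Vert c\Vert_\infty^2/(\lambda k))$ by Proposition~\ref{prop:sinkhornscomputational}, with $\Vert c\Vert_\infty$ bounded since the torus has bounded diameter, so $k \asymp 1/(\lambda\epsilon)\asymp \log(1/\epsilon)/\epsilon^2$ iterations suffice; each iteration costs $O(n^2)=O(\epsilon^{-2d})$ operations, giving total time $\tilde O(n^2/(\lambda\epsilon)) = \tilde O(\epsilon^{-2d}\cdot\epsilon^{-2}) = \tilde O(\epsilon^{-(2d+2)})$.

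For the debiased estimator $S_\lambda(\mu_h,\nu_h)$ I would use the analogous decomposition, applying Proposition~\ref{prop:grid} to each of the three regularized costs $T_\lambda(\mu_h,\nu_h)$, $T_\lambda(\mu_h,\mu_h)$, $T_\lambda(\nu_h,\nu_h)$ appearing in the definition of $S_\lambda$, and replacing the $O(\lambda\log(1/\lambda))$ bias term by the $O(\lambda^2 I)$ bound of Theorem~\ref{th:bias} (using that $I_0(\mu,\nu)$, and hence $I_0(\mu,\mu)$, $I_0(\nu,\nu)$, are finite). Now the bias allows $\lambda \asymp \epsilon^{1/2}$, and the discretization term from the second branch of Proposition~\ref{prop:grid}, namely $O(h^2\lambda^{-1})= O(h^2\epsilon^{-1/2})$, is $\lesssim\epsilon$ for $h\asymp\epsilon^{3/4}$; one should check that the admissibility hypothesis $h(M+\lambda^{-1})\leq C$ of Proposition~\ref{prop:grid} holds for this choice (it does, since $h\lambda^{-1}\asymp\epsilon^{3/4}\epsilon^{-1/2}=\epsilon^{1/4}\to 0$, so the condition is met for $\epsilon$ small, which is the regime of interest, or after rescaling $C$). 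Then $n=h^{-d}\asymp\epsilon^{-3d/4}$, the number of Sinkhorn iterations needed for each of the three transport problems is $k\asymp 1/(\lambda\epsilon)\asymp\epsilon^{-3/2}$, and the total cost is $O(n^2/(\lambda\epsilon)) = O(\epsilon^{-3d/2}\cdot\epsilon^{-3/2}) = O(\epsilon^{-(3d/2+3/2)})$, as claimed; here no poly-log factors arise because the $\lambda^2$ bias does not involve a logarithm.

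The only genuinely delicate point is verifying that the Sinkhorn accuracy estimate of Proposition~\ref{prop:sinkhornscomputational} transfers cleanly to the \emph{debiased} quantity $\hat S^{(k)}_{\lambda,n}$: one needs that running Sinkhorn to accuracy $\epsilon/3$ on each of the three sub-problems yields an estimate of $S_\lambda(\mu_h,\nu_h)$ to accuracy $\epsilon$, which follows from the triangle inequality and the definition of $S_\lambda$, at the cost of a constant factor $3$ in the number of operations. Everything else is a matter of substituting the chosen exponents into the three bounds and collecting powers of $\epsilon$; I would relegate the routine arithmetic and the precise tracking of log factors to Appendix~\ref{app:sinkhorndivergencesstat} (or wherever the grid-case proofs are collected).
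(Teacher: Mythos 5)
Your overall architecture (discretization error via Proposition~\ref{prop:grid} $+$ regularization bias $+$ Sinkhorn iteration error, then balance in $h,\lambda$) matches the paper's, and your exponent bookkeeping is correct. But there is a genuine gap in the $S_\lambda$ branch, and it is precisely the step you gloss over by "replacing the $O(\lambda\log(1/\lambda))$ bias term by the $O(\lambda^2 I)$ bound of Theorem~\ref{th:bias}". Theorem~\ref{th:bias} is proved through the dynamical formulation \eqref{eq:EntropicDynamic}, which on the torus corresponds to an entropic cost built from the \emph{heat kernel on $\TT^d$}, i.e.\ to $\tilde c_\lambda(x,y) = -\lambda\log\sum_{k\in\mathbb{Z}^d}\exp(-\Vert x-y-k\Vert_2^2/(2\lambda))$, and not to the cost $\frac12\Vert[x-y]\Vert_2^2$ that defines $T_\lambda$ and $S_\lambda$ in Section~\ref{sec:discretized}. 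One cannot simply work on $\RR^d$ instead, because there are no compactly supported measures on $\RR^d$ with Lipschitz log-densities. The paper therefore inserts the heat-kernel divergence $\tilde S_\lambda$ into the decomposition, applies the torus analogue of Theorem~\ref{th:bias} to $\tilde S_\lambda$ (Eq.~\eqref{eq:biastorus}), and separately proves $\vert S_\lambda(\mu,\nu)-\tilde S_\lambda(\mu,\nu)\vert\leq c_1' e^{-c_2/\lambda}$ (Lemma~\ref{lem:fromheattotorus}); that lemma is the substantive part of the proof, resting on the regularity theory of optimal transport on the torus and the fact that the optimal transport graph stays at positive distance from the cut-locus. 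Your proposal identifies the "genuinely delicate point" as the transfer of the Sinkhorn accuracy guarantee to the debiased quantity, which is in fact the routine step; the delicate point is the one above, and without it the $O(\lambda^2)$ bias bound for $S_\lambda$ on the torus is unjustified.

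A second, smaller flaw concerns the $T_\lambda$ branch: you bound the population bias $\vert T_\lambda(\mu,\nu)-W_2^2(\mu,\nu)\vert$ by invoking the "$4\lambda\log(n/\alpha)$-type estimate" on the population measures. That estimate comes from $T_\lambda-T_0\leq 2\lambda H(\gamma^*,\mu\otimes\nu)$ with $\gamma^*$ the unregularized optimal plan, and for absolutely continuous marginals $\gamma^*$ is singular with respect to $\mu\otimes\nu$, so this relative entropy is infinite; the population bound $O(\lambda\log(1/\lambda))$ is true but requires a different (block-approximation or dynamical) argument not developed in the paper. The paper sidesteps this by decomposing through the \emph{discrete} unregularized cost, $\vert T_\lambda(\mu_h,\nu_h)-T_0(\mu_h,\nu_h)\vert + \vert T_0(\mu_h,\nu_h)-T_0(\mu,\nu)\vert$, bounding the first term by $O(\lambda\log n)$ on the discrete measures and the second by $O(h)$ via Proposition~\ref{prop:grid} at $\lambda=0$. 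Since $\log n\asymp d\log(1/h)$, the final rates coincide with yours, but your justification as written does not stand. (Minor: finiteness of $I_0(\mu,\mu)$ and $I_0(\nu,\nu)$ does not follow from finiteness of $I_0(\mu,\nu)$; it follows from the Lipschitz log-density assumption, which gives $I_0(\mu,\mu)\leq M^2$.)
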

This result suggests that $S_\lambda(\mu_h,\nu_h)$ estimates $W_2^2(\mu,\nu)$ both faster and more accurately than $T_\lambda(\mu_h,\nu_h)$ for their respective optimal $\lambda$, and this behavior is observed in numerical experiments (cf.~Figure~\ref{fig-deterministic}). Our aim with Proposition~\ref{prop:gridperf} is to illustrate the potential usefulness of the debiasing terms beyond the random sampling setting, but we stress that we are just comparing simple upper bounds which are not intended to be the best possible (in particular, we are not exploiting the fact that the computational cost of each Sinkhorn iteration could be reduced from $O(n^2)$ to $O(n\log(n))$ using discrete convolutions~\cite[Sec.~6.3.1]{berman2017sinkhorn}). In fact, in a similar setting, a completely different analysis of Sinkhorn's iterations is carried in~\cite[Cor.1.4]{berman2017sinkhorn}, where a time complexity in $\tilde O(\epsilon^{-(2d+1)})$ is derived for $T_\lambda(\mu_h,\nu_h)$.

\section{Towards faster estimation with Richardson extrapolation}\label{sec:extrapolation}
The systematic bias induced by the Fisher information terms in Theorem~\ref{th:bias} can be removed using Richardson extrapolation~\cite{joyce1971survey, richardson1911approximate}, which usefulness in machine learning was recently pointed out  in~\cite{bach2020effectiveness}.
This technique consists in taking linear combinations of  $S_\lambda$ for various values of $\lambda>0$ in order to estimate $S_0$, by cancelling the successive terms of the Taylor expansion of $S_\lambda$ at $0$. Since in our context the first term of $S_\lambda-S_0$ is of order $\lambda^2$, this suggests to define (among other possible choices)
$
R_\lambda \eqdef 2S_\lambda - S_{\sqrt{2}\lambda}.
$
Indeed, whenever $S_\lambda = S_0 + \lambda^2 I +o(\lambda^2)$ for some $I\in \RR$, such as under the assumptions of Theorem~\ref{th:bias}, this quantity satisfies $R_\lambda = S_0 +o(\lambda^2)$.

\paragraph{Efficiency of $R_\lambda$ under an abstract assumption.} A difficulty with $R_\lambda$, or other extrapolated estimators, is that understanding their performance requires a fine understanding of the regularization path $\lambda \mapsto S_\lambda$. By remarking that in Eq.~\eqref{eq:EntropicDynamic}, $\lambda$ appears only via its square after debiasing, we might conjecture that if $S_\lambda$ admits a $4$th order Taylor expansion at $\lambda=0$, then the third term vanish. Before giving some arguments in favor of this property, let us state what it implies in terms of the performance of $\hat R_{\lambda,n} = \hat S_{\lambda,n} - \hat S_{\sqrt{2}\lambda,n}$, the extrapolation of the estimator $\hat S_{\lambda,n}$.

\begin{proposition}\label{prop:richardsonsample}
Assume that $\mu,\nu$ are compactly supported, that
$S_\lambda(\mu,\nu) - W_2^2(\mu,\nu) =  \lambda^2 I  +O(\lambda^4)$ for some $I\in \RR$ and let $d'=2\lfloor d/2\rfloor$. Then with $\lambda \asymp n^{-1/(d'+8)}$ it holds
$$
\Esp\big[ \vert \hat R_{\lambda,n} - W_2^2(\mu,\nu) \vert \big] \lesssim n^{-4/(d'+8)}. 
$$
Moreover, with probability $1-\delta$, this estimator returns an $\epsilon$-accurate estimation of $W_2^2(\mu,\nu)$ with $\tilde O(\epsilon^{-(d'+11)/2})$ computations via Sinkhorn's algorithm where $\tilde O$ hides poly-log factors in $1/\delta$.
\end{proposition}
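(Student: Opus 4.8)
The plan is to follow the template of Propositions~\ref{prop:samplesinkhorn} and~\ref{prop:timecomplexitySinkhorn}, based on the decomposition
$$
\big|\hat R_{\lambda,n} - W_2^2(\mu,\nu)\big| \;\le\; \big|\hat R_{\lambda,n} - \Esp[\hat R_{\lambda,n}]\big| \;+\; \big|\Esp[\hat R_{\lambda,n}] - R_\lambda(\mu,\nu)\big| \;+\; \big|R_\lambda(\mu,\nu) - W_2^2(\mu,\nu)\big|,
$$
together with the extra term $|\hat R^{(k)}_{\lambda,n} - \hat R_{\lambda,n}|$ produced by truncating Sinkhorn's algorithm to $k$ iterations, which is only needed for the computational claim. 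Throughout I would use that $\hat R_{\lambda,n} = 2\hat S_{\lambda,n} - \hat S_{\sqrt2\lambda,n}$ and $R_\lambda = 2S_\lambda - S_{\sqrt2\lambda}$ are fixed linear combinations, so that each ``statistical'' term reduces, up to a constant factor, to the corresponding term for $\hat S_{\lambda,n}$ at regularizations $\lambda$ and $\sqrt2\lambda$. The approximation term is where the improvement comes from and it is immediate: since $S_\lambda - W_2^2 = \lambda^2 I + O(\lambda^4)$ and $S_{\sqrt2\lambda} - W_2^2 = 2\lambda^2 I + O(\lambda^4)$, linearity gives $R_\lambda - W_2^2 = 2(S_\lambda - W_2^2) - (S_{\sqrt2\lambda} - W_2^2) = O(\lambda^4)$, the $\lambda^2 I$ contributions cancelling exactly.

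For the bias term I would invoke from Appendix~\ref{app:sinkhorndivergencesstat} the estimate underlying Proposition~\ref{prop:samplesinkhorn}, namely $\big|\Esp[\hat S_{\lambda,n}] - S_\lambda(\mu,\nu)\big| \lesssim \lambda^{-\lfloor d/2\rfloor} n^{-1/2}$ on bounded supports, apply it at $\lambda$ and at $\sqrt2\lambda$, and keep the larger of the two (the one at $\lambda$), obtaining $\big|\Esp[\hat R_{\lambda,n}] - R_\lambda\big| \lesssim \lambda^{-d'/2} n^{-1/2}$. For the concentration term, changing one sample perturbs each of $\hat S_{\lambda,n}$ and $\hat S_{\sqrt2\lambda,n}$ by $O(1/n)$ uniformly in $\lambda$ (bounded supports, hence bounded transport costs and dual potentials) --- this is exactly the bounded-difference property behind the concentration inequality in Proposition~\ref{prop:samplesinkhorn} --- so $\hat R_{\lambda,n}$ has bounded differences $O(1/n)$ as well, and McDiarmid's inequality yields $\mathbf{P}\big[|\hat R_{\lambda,n} - \Esp[\hat R_{\lambda,n}]| \ge t\big] \le 2\exp(-cnt^2)$ for a universal $c>0$; in particular this term contributes $O(n^{-1/2})$ in expectation, which is dominated by $\lambda^{-d'/2} n^{-1/2}$.

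It then remains to optimize over $\lambda$. Balancing the approximation error $\lambda^4$ against the statistical error $\lambda^{-d'/2} n^{-1/2}$ forces $\lambda^{4+d'/2} \asymp n^{-1/2}$, i.e.\ $\lambda \asymp n^{-1/(d'+8)}$, for which both are of order $n^{-4/(d'+8)}$; this gives the expected-error bound, and taking $t \asymp \sqrt{\log(1/\delta)/n}$ in the concentration inequality upgrades it to a statement holding with probability $1-\delta$ at the cost of a $\sqrt{\log(1/\delta)}$ factor, which is absorbed since $n^{-1/2} \le n^{-4/(d'+8)}$. For the computational claim I would fix $\lambda \asymp \epsilon^{1/4}$ so that the approximation error is $O(\epsilon)$, take $n$ just large enough that $\lambda^{-d'/2} n^{-1/2} = O(\epsilon)$, and run $k \asymp 1/(\lambda\epsilon)$ Sinkhorn iterations, which by Proposition~\ref{prop:sinkhornscomputational} applied to each of the entropic costs composing $\hat S_{\lambda,n}$ and $\hat S_{\sqrt2\lambda,n}$ makes the truncation error $O(\epsilon)$; multiplying the $O(n^2)$ cost per iteration and absorbing poly-logarithmic factors in $1/\epsilon$ and $1/\delta$ gives the stated total $\tilde O(\epsilon^{-(d'+11)/2})$, exactly as in the proof of Proposition~\ref{prop:timecomplexitySinkhorn}.

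The genuine content is entirely packaged in the sample-complexity bound for $\hat S_{\lambda,n}$ with the sharp exponent $\lfloor d/2\rfloor$; given that, the only subtleties specific to $\hat R_{\lambda,n}$ are (i) checking that the fixed linear combination does not amplify the bias beyond a constant, which relies on the stability of that bound under the substitution $\lambda \mapsto \sqrt2\lambda$, and (ii) the exact cancellation of the $\lambda^2$ term, which uses nothing beyond the hypothesized expansion. Consequently the main difficulty lies upstream --- in establishing the $\lambda^{-\lfloor d/2\rfloor} n^{-1/2}$ bias bound and, for the present statement, in the assumed fourth-order control $S_\lambda - W_2^2 = \lambda^2 I + O(\lambda^4)$ of the regularization path, whose verification (e.g.\ for Gaussians) is discussed separately.
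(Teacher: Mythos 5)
Your proposal is correct and follows essentially the same route as the paper: the paper's proof also reduces everything to Lemma~\ref{lem:samplecomplexityentropy} applied at $\lambda$ and $\sqrt{2}\lambda$ (giving the $(1+\lambda^{-d'/2})n^{-1/2}$ statistical term), uses the exact cancellation of the $\lambda^2 I$ term to get an $O(\lambda^4)$ approximation error, optimizes in $\lambda$, and handles the computational claim as in Proposition~\ref{prop:plugincomputational}. Your further splitting of the statistical error into a bias term and a McDiarmid concentration term is an immaterial refinement of the paper's direct bound on $\Esp[\vert \hat R_{\lambda,n}-R_\lambda\vert]$, and your exponent accounting (in fact yielding $(d'+10.5)/2$) is consistent with, indeed slightly sharper than, the stated $\tilde O(\epsilon^{-(d'+11)/2})$.
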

\begin{proof}
 We use Lemma~\ref{lem:samplecomplexityentropy} to get
 $$
\Esp[ \vert \hat R_{\lambda,n} - W_2^2(\mu,\nu)\vert] \leq  \Esp[\vert \hat R_{\lambda,n} - R_{\lambda}(\mu,\nu)\vert ] + \vert R_\lambda(\mu,\nu) - W_2^2(\mu,\nu)\vert \lesssim (1+\lambda^{-d'/2})n^{-1/2} + \lambda^4
$$
and optimize the bound in $\lambda$. For the last claim we proceed as in the proof of Proposition~\ref{prop:plugincomputational}.
\end{proof}
 Under this abstract assumption, there is thus a clear statistical improvement over the plug-in estimator for $d>8$ and a computational improvement for $d>6$. Notice that a similar performance analysis could be done in the deterministic setting of Section~\ref{sec:discretized}. In the rest of this section we discuss the assumption of Proposition~\ref{prop:richardsonsample}. First we show that it is satisfied in the Gaussian case and second we propose formal calculations towards a $4$th order Taylor expansion of $T_\lambda$.
 
 \paragraph{Gaussian case.}
 Let $\mu =\mathcal{N}(a,A)$ and $\nu = \mathcal{N}(b,B)$ be Gaussian probability distributions with means $a,b \in \RR^d$ and positive definite covariances $A,B \in \RR^{d\times d}$.
In this case, it is well known that $W_2^2(\mu,\nu) = \Vert a-b\Vert_2^2 + \bures^2(A,B)$ where $\bures^2(A,B) \eqdef \tr(A) +\tr(B) - 2\tr(S)$ with $S=(A^{1/2}BA^{1/2})^{1/2}$ is the squared Bures distance~\cite{bhatia2019bures}. More recently, an explicit expression for
$T_\lambda(\mu,\nu)$ was derived in~\cite{anonymous2020gaussian,chen2015optimal,mallasto2020entropy}. By a Taylor expansion of this expression (see Appendix~\ref{app:gaussian}), we find that
$$
S_\lambda(\mu,\nu) - W_2^2(\mu,\nu) = -\frac{\lambda^2}{8} \bures^2(A^{-1},B^{-1}) + \frac{\lambda^4}{384}\bures^2(A^{-3},B^{-3}) +O(\lambda^5).
$$
 This expansion shows that the hypotheses of Proposition~\ref{prop:richardsonsample} are satisfied (to the exception of the compactness assumption, but note that sample complexity bounds for $S_\lambda$ are also known in this case~\cite{mena2019statistical}). Also we can explicitly compute the Fisher information $I_0(\mu,\nu) = \tr (S^{-1})$ (Appendix~\ref{app:bias}) which shows that the second order term is consistent, as it must, with the expansion in Theorem~\ref{th:bias}.

\paragraph{Formal fourth order expansion.}
Denoting $J_{\lambda^2}(\mu,\nu)$ the r.h.s.~of Eq.~\eqref{eq:EntropicDynamic}, we show in Lemma~\ref{lem:rightderivative} that $\sigma \mapsto J_\sigma$ admits a right derivative at all $\sigma\geq 0$ which is the Fisher information $\frac14 I_{\sqrt{\sigma}}(\mu,\nu)$ defined in Eq.~\eqref{eq:fisherinformation}. Thus, if we assume that $\sigma \mapsto I_{\sqrt{\sigma}}(\mu,\nu)$ admits a right derivative $I'_0$ at $0$, then it holds
\begin{multline*}
T_\lambda(\mu,\nu)  = T_0(\mu,\nu) -{d\lambda}\log(2\pi \lambda)-\lambda (H(\mu)  + H(\nu)) + \frac{\lambda^2}{4} I_0(\mu,\nu) + \frac{\lambda^4}{8}  I'_0 +o(\lambda^4)\,,
\end{multline*}
where $I'_0=\frac{\dd}{\dd(\lambda^2)}I_\lambda(\mu,\nu)\vert_{\lambda=0} = \int_0^1 \int_{\RR^d} (\Vert \nabla \log \rho_0 \Vert^2 - 2\Delta \rho_0/\rho_0)\delta_{\lambda^2} \rho_\lambda\vert_{\lambda=0} \, dx$ is the variation of Fisher information in the direction of $\delta_{\lambda^2} \rho_\lambda\vert_{\lambda=0_+}$, the first variation of $\rho_\lambda$ w.r.t.~$\lambda^2$. 
Hence under this abstract regularity assumption on $I_{\sqrt{\sigma}}(\mu,\nu)$, the result of Proposition~\ref{prop:richardsonsample} holds true.
\section{Numerical experiments}\label{sec:numerics}
In this section, we assess the statistical and computational efficiency of the proposed estimators on synthetic problems\footnote{The code to reproduce these experiments is available at this webpage~\url{https://gitlab.com/proussillon/wasserstein-estimation-sinkhorn-divergence}.}. While this is what our theory controls, the error on the scalar $W_2^2(\mu,\nu)$ is not a suitable quantity to plot as it might vanish spuriously as we vary other parameters (such as $n$ or~$\lambda$), which hinders interpretation of the plots (see Appendix~\ref{app:additionalnumerics}). Instead, we propose to observe a more stringent and stable quantity, namely the $L_1$ error on the estimated dual potential $\varphi$, which is the Lagrange multiplier associated to the first marginal constraint in Eq.~\eqref{eq:entropycost}. This dual potential is the gradient of $W_2^2(\mu,\nu)$ with respect to $\mu$~\cite[Prop.~7.17]{santambrogio2015optimal}, a quantity of high interest when training machine learning models with $W_2^2$ as a loss function.

Specifically, given $v^{(k)} \in \RR^n$ obtained after $k$ Sinkhorn's iterations with discrete marginals $\mu_n,\nu_n$ as in Eq.~\eqref{eq:sinkhorniteration}, we define the  function $\hat u_{\mu,\nu}(x) = -\lambda \log (\sum_{j=1}^n e^{(v^{(k)}_j-\frac12 \Vert x-y_j \Vert_2^2)/\lambda}q_j)$. The quantity we plot is $\int \vert \hat \varphi_{\lambda,n}(x) - \varphi(x)\vert \dd\mu(x)$ estimated via Monte Carlo integration or on a fine grid, where $\hat \varphi_{\lambda,n}$ is defined as follows: (i) $\hat \varphi_{\lambda,n} = 2\hat u_{\mu,\nu}$ for the biased estimator $\hat T_{\lambda,n}$, (ii) $\hat \varphi_{\lambda,n} =2\hat u_{\mu,\nu} - (\hat u_{\mu,\mu'}+\hat v_{\mu,\mu'})$ for the debiased estimator $\hat S_{\lambda,n}$ and (iii) $2\hat \varphi_{\lambda,n} - \hat \varphi_{\sqrt{2}\lambda,n}$ for the extrapolated estimator $\hat R_{\lambda,n}$.

\paragraph{Random sampling.} Figure~\ref{fig:sampling} shows the approximation error for the estimators $T_\lambda$, $S_\lambda$ and $R_\lambda$ in the random sampling setting. Here, $\mu,\nu\in \Pp(\RR^d)$ with $d=5$ are smooth elliptically contoured distributions with compact support and are such that the optimal potential $\varphi$ is quadratic and admits a closed-form, as well as the transport cost (see Appendix~\ref{app:additionalnumerics}). These properties guarantee that the conclusions of Proposition~\ref{prop:samplesinkhorn} apply. As expected, for a given $\lambda$, $S_\lambda$ and $R_\lambda$ have a much smaller bias than $T_\lambda$ (left plot). Looking at the performance as a function of $\lambda$ (middle plot), we see that the error is minimal for some $\lambda^*$ that is much larger than what is needed for $T_\lambda$ to achieve a comparable accuracy. Also, choosing the best $\lambda^*$ for each $n$ (right panel), we see that $S_{\lambda^*}$ has the same rate as the plug-in estimator (estimated with $T_\lambda$ with a small $\lambda$), with a better constant. We remark that $R_\lambda$ does not converge faster, which does not contradict ours results since we have no guarantee on the specific quantity plotted here. 

\begin{figure}
  \centering
  
  \begin{subfigure}[b]{0.31\textwidth}
    \begin{tikzpicture}[scale = 0.5]
\begin{loglogaxis}[xlabel = number of samples $n$, ylabel = $L^1$ error on the first potential, no markers, legend pos=south west, grid = minor]

     \addplot+[ line width = 1.5pt, color = blue, error bars/.cd, y dir=both, y explicit] table [x= Nsamples, y= error_on_potentials, col sep=space, y error = std_on_potentials] {figures/OT_lambda_1_dimension_5.csv};
  \addlegendentry{$T_\lambda$}
  \addplot+[line width = 1.5pt, color = red, error bars/.cd, y dir=both, y explicit] table [x= Nsamples, y= error_on_potentials, col sep=space, y error = std_on_potentials] {figures/S_lambda_1_dimension_5.csv};
  \addlegendentry{$S_\lambda$}
  \addplot+[line width = 1.5pt, color = brown, error bars/.cd, y dir=both, y explicit] table [x= Nsamples, y= error_on_potentials, col sep=space, y error = std_on_potentials] {figures/R1_lambda_1_dimension_5.csv};
  \addlegendentry{$R_\lambda$}
                \addplot[thick, black] table [x= Nsamples, y= error_on_potentials, col sep=space] {figures/OT_lambda_0.01_dimension_5.csv};
    \addlegendentry{plug-in}

\end{loglogaxis}
\end{tikzpicture}
\end{subfigure}
\hfill
\begin{subfigure}[b]{0.31\textwidth}
    \begin{tikzpicture}[scale = 0.5]
\begin{loglogaxis}[xlabel = $\lambda$, ylabel = $L^1$ error on the first potential, no markers, legend pos=north west, grid = both, ymin = 2/100]
  \addplot+[ line width = 1.5pt, color = blue, error bars/.cd, y dir=both, y explicit] table [x= blur, y= error_on_potentials,y error = std_on_potentials, col sep=space]  {figures/OT_Nsamples_1000_dimension_5.csv};
  \addlegendentry{$T_\lambda$}
  \addplot+[line width = 1.5pt, color = red, error bars/.cd, y dir=both, y explicit] table [x= blur, y= error_on_potentials,y error = std_on_potentials, col sep=space]{figures/SD_Nsamples_1000_dimension_5.csv};
  \addlegendentry{$S_\lambda$}
  \addplot+[line width = 1.5pt, color = brown, error bars/.cd, y dir=both, y explicit] table [x= blur, y= error_on_potentials,y error = std_on_potentials, col sep=space] {figures/R1_Nsamples_1000_dimension_5.csv};
  \addlegendentry{$R_\lambda$}
\end{loglogaxis}
\end{tikzpicture}
\end{subfigure}
\hfill
\begin{subfigure}[b]{0.31\textwidth}
     \begin{tikzpicture}[scale = 0.5]
  \begin{loglogaxis}[xlabel = number of samples $n$, ylabel = $L^1$ error on the first potential, no markers, legend pos=north east, grid = minor, ymin = 1/100]
    \addplot[line width = 1.5pt, color = blue] table [x= Nsamples, y= error_on_potentials, col sep=space] {figures/OT_lambda_opt_dimension_5.csv};
    \addlegendentry{$T_{\lambda^*}$}
        \addplot[line width = 1.5pt, color = red] table [x= Nsamples, y= error_on_potentials, col sep=space] {figures/S_lambda_opt_dimension_5.csv};
        \addlegendentry{$S_{\lambda^*}$}
            \addplot[line width = 1.5pt, color = brown] table [x= Nsamples, y= error_on_potentials, col sep=space] {figures/R1_lambda_opt_dimension_5.csv};
  \addlegendentry{$R_{\lambda^*}$}
                \addplot[thick, black] table [x= Nsamples, y= error_on_potentials, col sep=space] {figures/OT_lambda_0.01_dimension_5_xmax10000.csv};
                \addlegendentry{plug-in}

\end{loglogaxis}
\end{tikzpicture}
\end{subfigure}
\caption{$L^1$ estimation error on the first potential for $\mu,\nu$ smooth compactly supported distributions with $d=5$. Left: as function of $n$ for $\lambda=1$. Middle: as a function of $\lambda$, for $n=10000$. Right: as a function of $n$ for the optimal $\lambda^*(n)$. Error bars show the standard deviation on 30 realizations.}\label{fig:sampling}
\end{figure}
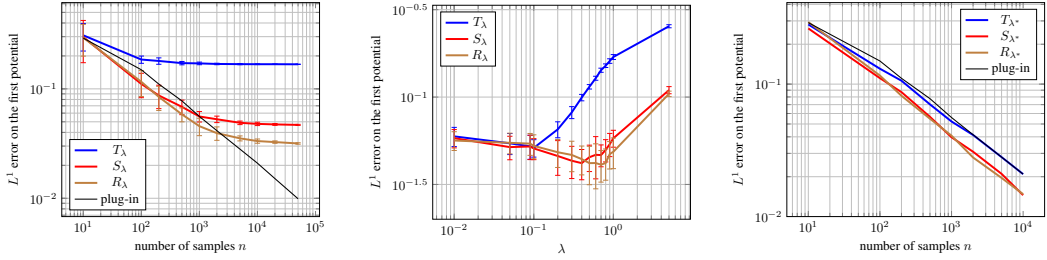

Overall, these estimators require less samples and a larger $\lambda$ to achieve a given accuracy compared to $T_\lambda$, which leads to substantial computational gains. 
This is illustrated on Figure~\ref{fig:computational_complexity} where for a target $L^1$ error on the potential, we chose the largest $\lambda$ and smallest $n$ that achieve this error, with $\lambda \in [0.1, 1]$ and $n \in [10, 100 000]$. We report the computational time using the Sinkhorn's iterations of Eq.~\eqref{eq:sinkhorniteration} stopped when the $\ell_1$-error on the marginals is below $10^{-5}$. We observe that for small target accuracies, the estimators $S_\lambda$ and $R_\lambda$ compare favorably to $T_\lambda$. In practical settings, one does not know \emph{a priori} the best choice for $\lambda$, but many machine learning tasks involving $W_2^2$ come with a performance criterion, in which case cross-validation can be used to select this parameter.

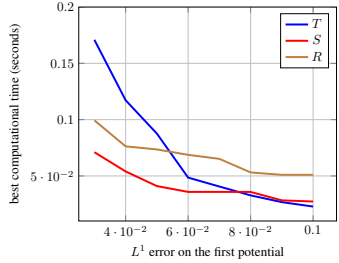
\begin{figure}[h]
              \centering
 \begin{tikzpicture}[scale = 0.5]
  \begin{axis}[xlabel = $L^1$ error on the first potential, ylabel = best computational time (seconds), no markers, legend pos=north east, grid = major, xmin = 0.025, ymin = 1/100, ymax = 0.2]
    \addplot[line width = 1.5pt, color = blue] table [x= Error , y= T, col sep=space] {figures/xp_ultime.csv};
    \addlegendentry{$T$}
    \addplot[line width = 1.5pt, color = red] table [x= Error , y= S, col sep=space] {figures/xp_ultime.csv};
    \addlegendentry{$S$}
   \addplot[line width = 1.5pt, color = brown] table [x= Error , y= R, col sep=space] {figures/xp_ultime.csv};
    \addlegendentry{$R$}
\end{axis}
\end{tikzpicture}
\caption{Best computational time achieved by the estimators to reach a given accuracy (after optimizing over $n$ and $\lambda$), for $\mu,\nu$ smooth compactly supported distributions with $d=5$. }
\label{fig:computational_complexity}
\end{figure}

\paragraph{Discretization on grids.}

Figure~\ref{fig-deterministic} shows the evolution of the errors for densities $(\mu,\nu)$ on the 1-D torus, the setting of Proposition~\ref{prop:gridperf}. In this case, one can compute efficiently the dual potentials $\varphi$ using cumulative functions~\cite{rabin2011transportation}.
This figure shows that, as expected, for a fixed $(h,\lambda)$ the error of $S_\lambda$ and $R_\lambda$ is systematically lower than that of $T_\lambda$. Even when selecting the optimal regularization $\lambda^\star(h)$ for each $h$ and for each method (which is a fair comparison), the error of $S_\lambda$ and $R_\lambda$ is still lower. 
Furthermore, the optimal parameter $\lambda^\star(h)$ is systematically larger for $S_\lambda$ and $R_\lambda$. 
Additional figures showing visual comparisons of the potentials and their approximations are provided in the appendix. 

\begin{figure}[h]
		\centering
		\includegraphics[width=\linewidth]{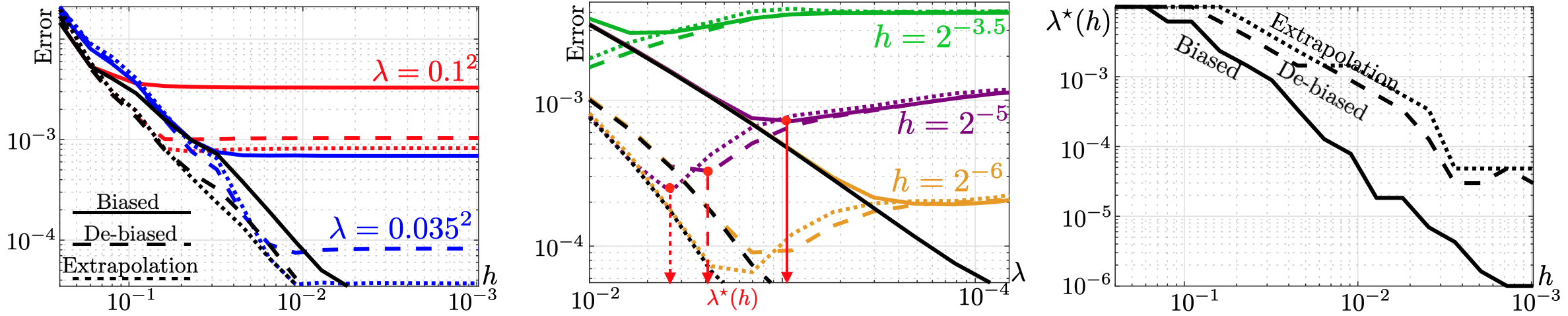}
	\caption{
	    Left: $L^1$ error on the first potential $\varphi$ as a function of the grid size $h$, for several value of $\lambda$. 
	    Middle: same error, displayed as a function of $\lambda$, for several grid sizes $h$.
	    Right: evolution of the optimal regularization parameter $\lambda^\star(h)$ as a function of the grid size $h$. 
	    }\label{fig-deterministic}
\end{figure}
\section{Conclusion and open questions}
In this paper we have exhibited the usefulness of entropic regularization with debiasing for the estimation of the squared Wasserstein distance: it may increase both accuracy and efficiency when the problem has a smooth nature. Numerical experiments suggest that the theory could be extended in several directions. First, the Sinkhorn divergence estimator appears at least as statistical efficient as the plug-in estimator, while our bound is slightly weaker. Also, the estimation of Kantorovich potentials seems to enjoy similar guaranties, but this is not covered by our theory.


\section*{Acknowledgments}
The works of Pierre Roussillon, Flavien L\'eger and Gabriel Peyr\'e is supported by the ERC grant NORIA and by the French government under management of Agence Nationale de la Recherche as part of the “Investissements d’avenir” program, reference ANR19-P3IA-0001 (PRAIRIE 3IA Institute).

\bibliographystyle{plain}
\bibliography{paper_2020.bib}

\newpage
\appendix

\section*{Supplementary Material}
Supplementary material for the paper: “Faster Wasserstein Distance Estimation with the Sinkhorn Divergence” authored by Lénaïc Chizat, Pierre Roussillon, Flavien Léger, François-Xavier Vialard and Gabriel Peyré (NeurIPS 2020). This supplementary material is organized as follows:
\begin{itemize}
    \item Appendix~\ref{app:bias} contains the proofs of Section~\ref{sec:approximation},
    \item Appendix~\ref{app:computational} recaps the convergence analysis of~\cite{dvurechensky2018computational} to obtain Proposition~\ref{prop:sinkhornscomputational},
    \item Appendix~\ref{app:plugin} contains the proofs of Section~\ref{sec:plugin},
    \item Appendix~\ref{app:sinkhorndivergencesstat} contains the proofs of Section~\ref{sec:Srandom},
    \item Appendix~\ref{app:grid} contains the proofs of Section~\ref{sec:discretized},
    \item in Appendix~\ref{app:gaussian}, we derive the Taylor expansion for Gaussian distributions presented in Section~\ref{sec:extrapolation},
    \item finally, Appendix~\ref{app:additionalnumerics} contains details on the settings of the numerical experiments and additional figures.
\end{itemize}

\section{Bounds on the approximation error}\label{app:bias}

\paragraph{Dynamic entropy regularized optimal transport.}
Let us first justify how to obtain Eq.~\eqref{eq:EntropicDynamic} since our conventions are slightly different than in~\cite{conforti2019formula}. In that reference, for $\mu$ and $\nu$ absolutely continuous with compact support, the authors define
$$
\lambda C_\lambda(\mu,\nu) = \min_{\gamma \in \Pi(\mu,\nu)} \lambda H(\gamma, K)
$$
where $K = (2\pi \lambda)^{-d/2}\exp(-\Vert y-x\Vert_2^2/(2\lambda))\dd x\dd y$ is the heat kernel at time $\lambda/2$. In contrast, we can see from Eq.~\eqref{eq:entropycost} that
$$
\frac12 T_\lambda(\mu,\nu) = \min_{\gamma \in \Pi(\mu,\nu)} \lambda H(\gamma,\tilde K)
$$
where $\tilde K = \exp(-\Vert y-x\Vert_2^2/(2\lambda))\mu(x)\nu(y)\dd x \dd y$. We directly deduce that $\frac12 T_\lambda(\mu,\nu) = \lambda C_\lambda(\mu,\nu) - \lambda H(\mu)-\lambda H(\nu) -\frac{d\lambda}{2}\log(2\pi\lambda)$. Thus Eq.~\eqref{eq:EntropicDynamic} follows by the dynamic formulation of entropy regularized optimal transport in~\cite{conforti2019formula} which reads
\begin{align*}
\lambda C_\lambda(\mu,\nu) -\frac\lambda2 H(\mu) -\frac\lambda2H(\nu) 
&= \min_{\rho, v} \int_0^1 \int_{\RR^d} \big( \frac12\Vert v(t,x)\Vert_2^2 + \frac{\lambda^2}{8} \Vert \nabla_x \log \rho(t,x)\Vert_2^2 \big)\rho(t,x)\dd x \dd t
\end{align*}
where the constraints on $(\rho,v)$ are as in Eq.~\eqref{eq:EntropicDynamic}. Note that $\nabla_x \log \rho$ refers to the \emph{weak logarithmic gradient} of $\rho$, which in particular does not requires $\rho>0$ to be well defined, but only that for almost every $t\in [0,1]$, $\rho(t,\cdot)$ admits a distributional gradient which is an absolutely continuous measure with respect to $\rho(t,\cdot)$, and $\nabla_x \log \rho_t \eqdef \frac{\dd \nabla \rho_t}{\dd \rho_t}$ refers to its density with respect to $\rho_t$ (see e.g.~\cite{gianazza2009wasserstein}).
\paragraph{First order expansion.} Let us state and prove a lemma that intervenes in the proof of Theorem~\ref{th:bias}. Arguments towards this expansion appeared in~\cite[Theorem 1.6]{conforti2019formula} but under an abstract twice-differentiability assumption that is not needed in our statement.
\begin{lemma}\label{lem:rightderivative}
Assume that $\mu,\nu \in \Pp(\RR^d)$ have bounded densities and supports. It holds
$$
\frac{\dd}{\dd \sigma } J_{\sigma}\vert_{\sigma =0_+} = \frac14 I_0(\mu,\nu)
$$
where, as in the proof of Theorem~\ref{th:bias}, $J_{\lambda^2}(\mu,\nu)$ refers to the right-hand side of \eqref{eq:EntropicDynamic}. More generally, the right derivative of $\sigma \mapsto J_\sigma$ exists for all $\sigma \geq 0$ and equals $\frac14 I_{\sqrt{\sigma}}(\mu,\nu)$.
\end{lemma}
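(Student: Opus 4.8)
The plan is to view $\sigma \mapsto J_\sigma$ as an infimum of affine functions of $\sigma$ and to combine the resulting concavity with a stability (lower-semicontinuity) analysis of the minimizers of the right-hand side of Eq.~\eqref{eq:EntropicDynamic}. Write $J_\sigma(\mu,\nu) = \inf_{(\rho,v)} \big( K(\rho,v) + \tfrac{\sigma}{4} F(\rho) \big)$, where $K(\rho,v) = \int_0^1\!\int \|v(t,x)\|_2^2\, \rho(t,x)\,\dd x\,\dd t$ is the kinetic energy, $F(\rho) = \int_0^1\!\int \|\nabla_x\log\rho(t,x)\|_2^2\,\rho(t,x)\,\dd x\,\dd t$ is the integrated Fisher information, and the infimum runs over pairs $(\rho,v)$ feasible in Eq.~\eqref{eq:EntropicDynamic} (continuity equation together with the endpoint constraints $\rho(0,\cdot) = \mu$, $\rho(1,\cdot) = \nu$). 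For each fixed feasible $(\rho,v)$ the map $\sigma \mapsto K(\rho,v) + \tfrac\sigma4 F(\rho)$ is affine, so $J$ is concave on $[0,\infty)$ and hence admits a right derivative at every $\sigma_0 \geq 0$. Fix $\sigma_0 \geq 0$, set $\lambda_0 = \sqrt{\sigma_0}$, and let $(\rho_{\lambda_0}, v_{\lambda_0})$ denote the unique minimizer at parameter $\sigma_0$~\cite{gigli2018benamoubrenier}; by definition~\eqref{eq:fisherinformation}, $F(\rho_{\lambda_0}) = I_{\lambda_0}(\mu,\nu)$, which is finite when $\sigma_0 > 0$ since $\tfrac{\sigma_0}{4}F(\rho_{\lambda_0}) \leq J_{\sigma_0} < \infty$.

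For the upper bound on the right derivative, I would use $(\rho_{\lambda_0}, v_{\lambda_0})$ as a competitor for $J_\sigma$ with $\sigma > \sigma_0$: this pair is feasible for every $\sigma$, and $K(\rho_{\lambda_0}, v_{\lambda_0}) = J_{\sigma_0} - \tfrac{\sigma_0}{4} I_{\lambda_0}(\mu,\nu)$, so $J_\sigma \leq J_{\sigma_0} + \tfrac{\sigma - \sigma_0}{4} I_{\lambda_0}(\mu,\nu)$ (the bound being trivial when $I_{\lambda_0}(\mu,\nu) = \infty$); dividing by $\sigma-\sigma_0$ and letting $\sigma\downarrow\sigma_0$ gives $\frac{\dd}{\dd\sigma}J_\sigma\vert_{\sigma = \sigma_0+} \leq \tfrac14 I_{\lambda_0}(\mu,\nu)$. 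For the matching lower bound, let $(\rho_{\sqrt\sigma}, v_{\sqrt\sigma})$ be the minimizer at level $\sigma$. Since this pair is feasible, $K(\rho_{\sqrt\sigma}, v_{\sqrt\sigma}) + \tfrac{\sigma_0}{4} F(\rho_{\sqrt\sigma}) \geq J_{\sigma_0}$, whence $J_\sigma = K(\rho_{\sqrt\sigma}, v_{\sqrt\sigma}) + \tfrac{\sigma}{4} F(\rho_{\sqrt\sigma}) \geq J_{\sigma_0} + \tfrac{\sigma - \sigma_0}{4} F(\rho_{\sqrt\sigma})$, and therefore $\frac{\dd}{\dd\sigma}J_\sigma\vert_{\sigma=\sigma_0+} \geq \tfrac14 \liminf_{\sigma\downarrow\sigma_0} F(\rho_{\sqrt\sigma})$. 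Everything thus reduces to the lower-semicontinuity statement $\liminf_{\sigma\downarrow\sigma_0} F(\rho_{\sqrt\sigma}) \geq F(\rho_{\lambda_0})$.

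To establish this, I would first extract compactness: the bound $K(\rho_{\sqrt\sigma}, v_{\sqrt\sigma}) \leq J_\sigma \leq J_{\sigma_0 + 1}$ is uniform for $\sigma \in (\sigma_0, \sigma_0+1]$, which makes the curves $t\mapsto \rho_{\sqrt\sigma}(t,\cdot)$ uniformly Hölder-$\tfrac12$ in $W_2$, and, combined with the boundedness of the supports of $\mu$ and $\nu$ (bounding second moments uniformly in $t$ and $\sigma$), yields tightness; along any sequence $\sigma_k \downarrow \sigma_0$ one then passes to a limit $\rho_{\sqrt{\sigma_k}} \to \bar\rho$ narrowly, uniformly in $t$, together with a weak-$*$ limit $\rho_{\sqrt{\sigma_k}} v_{\sqrt{\sigma_k}} \to \bar m$ of the momenta, and the limit pair $(\bar\rho,\bar m)$ is feasible. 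Using the joint lower-semicontinuity of the kinetic energy in momentum form and of $F(\rho) = \int_0^1 \mathcal I(\rho_t)\,\dd t$ (where $\mathcal I$ is the spatial Fisher information, lower-semicontinuous under narrow convergence, together with Fatou in $t$), and the fact that $K(\rho_{\sqrt{\sigma_k}}, v_{\sqrt{\sigma_k}}) + \tfrac{\sigma_0}{4} F(\rho_{\sqrt{\sigma_k}}) \leq J_{\sigma_k} \to J_{\sigma_0}$, the limit pair attains $J_{\sigma_0}$, hence by uniqueness of the minimizer~\cite{gigli2018benamoubrenier} one has $\bar\rho = \rho_{\lambda_0}$; as this holds along every subsequence, the whole family converges and lower-semicontinuity of $F$ gives $\liminf_{\sigma\downarrow\sigma_0} F(\rho_{\sqrt\sigma}) \geq F(\rho_{\lambda_0}) = I_{\lambda_0}(\mu,\nu)$. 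Combined with the upper bound this proves $\frac{\dd}{\dd\sigma}J_\sigma\vert_{\sigma=\sigma_0+} = \tfrac14 I_{\lambda_0}(\mu,\nu)$; specializing to $\sigma_0 = 0$ gives the first claim, the identity holding with both sides equal to $+\infty$ when $I_0(\mu,\nu) = \infty$ since then the lower bound alone forces the right derivative to be infinite.

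The main obstacle is this last step: making the compactness and lower-semicontinuity argument rigorous requires choosing the right topology — narrow convergence of $\rho_t$ uniformly in $t$, weak-$*$ convergence of the momenta — in which the continuity equation and the endpoint constraints are stable and both the kinetic energy and the weak integrated Fisher information are simultaneously lower-semicontinuous; handling the weak logarithmic gradient of possibly non-positive densities needs care. By contrast, the concavity-based first paragraph and the two difference-quotient estimates are essentially mechanical, and the monotonicity of the difference quotients makes the case $I_0(\mu,\nu)=\infty$ automatic once the finite-information estimates are in place.
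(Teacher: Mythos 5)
Your proposal is correct and follows essentially the same route as the paper's proof: concavity of $\sigma\mapsto J_\sigma$ as an infimum of affine functions, the upper bound on the difference quotient from using the minimizer at $\sigma_0$ as a competitor, and the lower bound $J_\sigma \geq J_{\sigma_0} + \tfrac{\sigma-\sigma_0}{4}F(\rho_{\sqrt\sigma})$ combined with convergence of the minimizers to $\rho_{\lambda_0}$ and lower semicontinuity of the integrated Fisher information. The only difference is cosmetic: where you sketch a per-time-slice narrow-convergence plus Fatou argument for the lower semicontinuity of $F$, the paper invokes a space-time result on weak logarithmic gradients (Proposition~2.2 of the cited reference of Gianazza et al.) to identify the weak limit of $\nabla\rho_n$ with $\nabla_x\rho_0$ and conclude $\liminf I_n \geq I_0$.
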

\begin{proof}
Since $\sigma \mapsto J_\sigma(\mu,\nu)$ is defined as an infimum of affine functions in $\sigma$, it is concave. Let $(\sigma_n)_{n\in \mathbb{N}}$ be a decreasing sequence of positive real numbers converging to $0$ and let 
$$
\alpha_n = \frac{J_{\sigma_n}-J_0}{\sigma_n}.
$$
By concavity, $\alpha_n$ is non-decreasing and admits a limit $J'_0 = \frac{\dd}{\dd \sigma} J_{\sigma}\vert_{\sigma=0_+}$ that is the right derivative of $J$ at $0$. Our goal is to show that $J'_0 =I_0(\mu,\nu)/4$.
By the argument in the proof of Theorem~\eqref{th:bias}, we have $\alpha_n\leq I_0(\mu,\nu)/4$ $\forall n$ thus $J'_0 \leq I_0(\mu,\nu)/4$, so we just have to prove the other inequality.

Let $(\rho_n,v_n)_{n\geq 0}$ be a sequence of minimizers for the r.h.s.~of Eq.~\eqref{eq:EntropicDynamic} (which is in fact unique although we do not use that fact here~\cite{gigli2018benamoubrenier}) with $\lambda^2=\sigma_n$ and let $V_n = \int_0^1\int_{\RR^d} \Vert v_n\Vert^2_2\dd\rho_n$ and $I_n = \int_0^1\int_{\RR^d} \Vert \nabla \log(\rho_n)\Vert_2^2\dd\rho_n$. Since $V_n$ is uniformly bounded and converges to $V_0=W_2^2(\mu,\nu)$, we have that $\rho_n$ converges weakly (in duality with continuous functions with compact support) to $\rho_0$, the unique constant speed Wasserstein geodesic between $\mu$ and $\nu$ (see, e.g.~\cite[Cor.~4.10]{dolbeault2009new} or by an application of~\cite[Proposition~2.2]{gianazza2009wasserstein} as below). Moreover, since $V_n\geq V_0$, it holds
$$
\alpha_n =\frac{V_n - V_0}{\sigma_n} + \frac14 I_n \geq \frac14 I_n
$$
and in particular we have the uniform bound $I_n\leq I_0$. It follows by~\cite[Proposition~2.2]{gianazza2009wasserstein} applied to the quantity $I_n=\int_0^1\int_{\RR^d}\Vert \frac{\dd(\nabla \rho_n)}{\dd \rho_n}\Vert_2^2\dd\rho_n(x,t)$  that $\nabla \rho_n$, seen as a vector valued measure on $[0,1]\times \RR^d$, admits a weak limit denoted $\omega$ which is absolutely continuous with respect to $\rho_0$ and that $\lim\inf I_n \geq \int_0^1\int_{\RR^d} \Vert \frac{\dd \omega}{\dd \rho_0}\Vert_2^2\dd\rho_0(t,x)$. Since for any compactly supported function $\varphi \in \mathcal{C}^1([0,1]\times \RR^d;\RR^d)$ it holds $\int \mathrm{div}_x(\varphi) \dd\rho_n\to \int \mathrm{div}_x(\varphi) \dd\rho_0$ and $\int \varphi \cdot \dd(\nabla \rho_n)\to \int \varphi \cdot \dd \omega$, we have that $\omega = \nabla_x \rho_0$ and thus the previous integral is precisely the Fisher information of $\rho_0$ integrated in time. It follows that $\lim\inf I_n \geq I_0$ hence $J'_0\geq \frac14 I_0$ which concludes the proof.
Inspecting the above argument, we see that in fact it applies directly to the case $\sigma>0$ (except that of course the trajectory recovered as $n\to \infty$ is $\rho_{\sqrt{\sigma}}$), hence our second claim.
\end{proof}

\paragraph{Bounds on the Fisher information of the geodesic.} Let us now prove the bounds on the Fisher information of the Wasserstein geodesic that appear in Proposition~\ref{cor:FirstFisherBound}. The main idea is to express $I_0(\mu,\nu)$ in terms of the initial and final densities and the Brenier potential.

\begin{proof}[Proof of Proposition~\ref{cor:FirstFisherBound}]
Let us express $I_0(\mu,\nu)$ in terms of the densities $\rho_0$ and $\rho_1$ (of $\mu$ and $\nu$ respectively) and the Brenier potential $\varphi$ which is the convex function such that $(\nabla \varphi)_\#\mu = \nu$, i.e.~$\nu$ is the pushforward of $\mu$ by the map $\nabla \varphi$. Let $(\rho_t)_{t\in [0,1]}$ be the density of the $W_2$-geodesic between $\mu$ and $\nu$. We start with the conservation of mass formula which holds under our regularity assumptions:
$$
\rho_0(x) = \det(\nabla^2\varphi_t(x))\rho_t(\nabla \varphi_t(x)))
$$
where $\varphi_t(x)\eqdef (1-t)\Vert x\Vert^2_2/2 + t\varphi(x)$ is such that $(\nabla \varphi_t)_\# \rho_0 =\rho_t$. By taking the logarithm we get
$$
\log \rho_0(x) = \log\rho_t(\nabla \varphi_t(x)) + \log \det(\nabla^2\varphi_t(x)).
$$
Let us now take the gradient of this expression. We denote by $d^3\varphi(x):\RR^d\to \RR^{d\times d}$ the weak differential of $x\mapsto \nabla^2\varphi(x)$ (which exists for almost every $x$ and is bounded since $\nabla^2\varphi$ is assumed Lipschitz) and by $[d^3\varphi(x)]^*:\RR^{d\times d}\to \RR^d$ its adjoint. Using the fact that the differential of $A\mapsto \log\det A$ at $A$ is the scalar product with $A^{-1}$ we get that for almost every $x\in \RR^d$,
\begin{equation}\label{eq:gradlog}
    \nabla \log \rho_0(x) = \nabla^2\varphi_t(x) \nabla \log \rho_t(\nabla \varphi_t(x)) + [d^3\varphi_t(x)]^* [\nabla^2 \varphi_t(x)]^{-1}.
\end{equation}
It follows that
\begin{align*}
    I_0(\mu,\nu) 
    &= \int_0^1\int_{\RR^d} \Vert \nabla \log \rho_t(x)\Vert_2^2 \rho_t(x) \dd x\dd t\\
    & =\int_0^1 \int_{\RR^d} \Vert \nabla \log \rho_t(\nabla \varphi_t(x)) \Vert_2^2 \rho_0(x)\dd x\dd t\\
    & = \int_0^1 \int_{\RR^d} \Vert [\nabla^2\varphi_t(x)]^{-1} \nabla \log \rho_0(x) - t  [\nabla^2\varphi_t(x)]^{-1} [d^3\varphi(x)]^* [\nabla^2 \varphi_t(x)]^{-1} \Vert_2^2 \rho_0(x)\dd x\dd t
\end{align*}
where we have used the fact that $d^3\varphi_t(x)=td^3\varphi(x)$.
\paragraph{General case.} In the general case, we simply use the bounds $\nabla^2 \varphi_t(x)\succeq ((1-t)+t\kappa) \mathrm{Id}$ and $\Vert d^3\varphi(x)\Vert \leq L$ almost everywhere in operator norm and the identity $\vert a+b\vert^2 \leq 2\vert a\vert^2 + 2\vert b\vert^2$ valid for any $a,b\in \RR$ to get
\begin{align*}
I_0(\mu,\nu) &\leq 2\Big(\int_0^1 \frac{\dd t}{(1+(\kappa-1))^2}\Big)I_0(\mu,\mu) + 2\Big(\int_0^1 \frac{t^2\dd t}{(1+(\kappa-1))^4}\Big)L^2 \\
& = 2\kappa^{-1}I_0(\mu,\mu) +(2/3)\kappa^{-3}L^2.
\end{align*}

\paragraph{One dimensional case.} When $d=1$, from Eq.~\eqref{eq:gradlog} at time $t=1$, we get
\[
\varphi'''(x) = \nabla \log \rho_0 (x) \varphi''(x) - \nabla \rho_1(\nabla\varphi(x)) \varphi''(x)^2.
  \]
  Plugging this expression in the previous integral leads to:
  \[
I_0(\mu,\nu) = \int_{\RR}\int_0^1\Bigg(\frac{(1-t)\nabla\log\rho_0 + t\nabla \log \rho_1(\nabla\varphi(x))\varphi''(x)^2}{\big((1-t) + t\varphi''(x) \big)^2}\Bigg)^2\dd t\rho_0(x)\dd x
\]
With the valid change of variables $1-s = \frac{t\varphi''(x)}{(1-t) + t\varphi''(x)}$ (and thus $s = \frac{1-t}{(1-t) + t\varphi''(x)}$), we obtain:

\[
  \begin{aligned}
    I_0(\mu,\nu) &= \int_\RR \int_0^{1}\frac{1}{\varphi''(x)}\Big((1-s)\nabla \log\rho_0(x) + s \nabla\log\rho_1(\varphi'(x))\varphi''(x)\Big)^2\dd s \rho_0(x) \dd x \\
    &= \int_\RR \int_0^{1}\frac{1}{\varphi''(x)}\Big((1-s)\nabla \log\rho_0(x) + s \nabla\log(\rho_1\circ\varphi')(x)\Big)^2\dd s \rho_0(x) \dd x
  \end{aligned}
\]
This leads to the bound $I_0(\mu,\nu) \leq (2/3)\kappa^{-1} I_0(\mu,\mu) + (2/3)KI_0(\nu,\nu)$ since $(\varphi')_\# \mu=\nu$.
\end{proof}

\paragraph{Gaussian case.} Let us now give the explicit expression of the Fisher information of the Wasserstein geodesic between Gaussian distributions, which is mentioned in Section~\ref{sec:extrapolation}. Whenever we deal with a positive semidefinite matrix $A$, the matrix $A^{1/2}$ refers to its unique positive semidefinite square root.
\begin{proposition}\label{prop:explicit_fisher}
If $\mu = {\cal N}(0,A), \: \nu = {\cal N}(0,B)$ then $I_0(\mu,\nu) = \tr S^{-1}$ with $S = (A^{1/2}BA^{1/2})^{1/2}$.
\end{proposition}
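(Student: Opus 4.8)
The plan is to compute the Wasserstein geodesic between the two centered Gaussians explicitly, observe that it is itself a centered Gaussian at each time, differentiate its log-density in space, and integrate. First I would recall that the optimal transport map between $\mu = \mathcal{N}(0,A)$ and $\nu = \mathcal{N}(0,B)$ is linear, given by $\nabla\varphi(x) = Tx$ where $T = A^{-1/2}(A^{1/2}BA^{1/2})^{1/2}A^{-1/2} = A^{-1/2}SA^{-1/2}$ is the unique symmetric positive definite matrix with $TAT = B$ (see \cite{bhatia2019bures}). The displacement interpolation is then $\rho_t = (T_t)_\#\mu$ with $T_t = (1-t)\mathrm{Id} + tT$, so $\rho_t = \mathcal{N}(0,A_t)$ where $A_t = T_t A T_t$ (using $T_t$ symmetric). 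In particular $\rho_t$ is Gaussian with mean zero at every time.

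Next I would use the elementary identity for a centered Gaussian density $g(x) = (2\pi)^{-d/2}(\det C)^{-1/2}\exp(-\tfrac12 x^\top C^{-1} x)$: one has $\nabla\log g(x) = -C^{-1}x$, hence the spatial Fisher information of $\rho_t$ is
$$
\int_{\RR^d}\Vert \nabla\log\rho_t(x)\Vert_2^2\,\rho_t(x)\,\dd x = \int_{\RR^d} x^\top A_t^{-1} A_t^{-1} x \,\rho_t(x)\,\dd x = \tr(A_t^{-1}A_t^{-1}A_t) = \tr(A_t^{-1}).
$$
So $I_0(\mu,\nu) = \int_0^1 \tr(A_t^{-1})\,\dd t = \int_0^1 \tr\big((T_tAT_t)^{-1}\big)\,\dd t$. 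It remains to evaluate this integral. The clean way is to conjugate by $A^{1/2}$: writing $M = A^{1/2}TA^{-1/2}$ one checks $M$ is similar to a symmetric matrix and in fact $A^{1/2}T_tAT_tA^{1/2} = \big((1-t)A^{1/2} + tA^{1/2}T A^{1/2}\big)\cdot(\dots)$; more directly, since $T = A^{-1/2}SA^{-1/2}$ we get $A^{1/2}T_t A T_t A^{1/2} = \big((1-t)A + tS\big)A^{-1}\big((1-t)A+tS\big)$... this is getting heavy, so instead I would diagonalize in the basis that simultaneously handles $A$ and $S$. Since $S = (A^{1/2}BA^{1/2})^{1/2}$ commutes with $A^{1/2}BA^{1/2}$, and after the substitution $y = A^{-1/2}x$ the geodesic becomes $\tilde\rho_t = \mathcal{N}(0,\tilde A_t)$ with $\tilde A_t = \big((1-t)\mathrm{Id} + tA^{-1/2}SA^{-1/2}\big)^2$ scaled appropriately; in an orthonormal eigenbasis of the relevant symmetric matrix with eigenvalues $s_i$ (the eigenvalues of $S$), each scalar track contributes $\int_0^1 \frac{\dd t}{((1-t)+ts_i)^2} = \frac{1}{s_i}$. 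Summing over $i$ gives $\sum_i s_i^{-1} = \tr(S^{-1})$.

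The main obstacle is purely bookkeeping: keeping the conjugations straight so that the one-dimensional integral $\int_0^1((1-t)+ts)^{-2}\dd t = 1/s$ appears with the eigenvalues of $S$ (not of $A$, $B$, or $T$) as the parameters. The cleanest route to avoid error is: (i) reduce to the scalar/commuting case by the change of variables $x \mapsto A^{-1/2}x$, under which $\mu \mapsto \mathcal{N}(0,\mathrm{Id})$, $\nu\mapsto \mathcal{N}(0, A^{-1/2}BA^{-1/2})$, noting $I_0$ is invariant under this linear change because the integrand $\Vert\nabla\log\rho_t\Vert^2\rho_t$ transforms as a density-weighted quantity whose integral is the trace of a conjugation-invariant matrix; (ii) in that frame the transport is $y\mapsto Cy$ with $C$ symmetric positive definite and $C^2 = A^{-1/2}BA^{-1/2}$, whose eigenvalues are exactly $s_i^2/\text{(eigenvalues of }A)$ — again one must be careful, so alternatively just verify the final formula against the known consistency check already noted in the paper, namely that $-\tfrac14 I_0(\mu,\nu) + \tfrac18(I_0(\mu,\mu)+I_0(\nu,\nu))$ must equal $\tfrac18\bures^2(A^{-1},B^{-1})$, using $I_0(\mathcal{N}(0,A),\mathcal{N}(0,A)) = \tr(A^{-1})$ (the $s_i = a_i$ case) and $\bures^2(A^{-1},B^{-1}) = \tr(A^{-1}) + \tr(B^{-1}) - 2\tr((A^{-1/2}B^{-1}A^{-1/2})^{1/2})$; matching the cross term forces $I_0(\mu,\nu) = \tr((A^{1/2}BA^{1/2})^{-1/2}) = \tr(S^{-1})$, which both confirms the computation and pins down the normalization.
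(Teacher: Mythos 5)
Your setup is sound and, once unwound, is essentially the same computation as the paper's, just organized at time $t$ instead of pulled back to $t=0$ through the transport map as in the proof of Proposition~\ref{cor:FirstFisherBound}: the geodesic is $\rho_t=\mathcal{N}(0,A_t)$ with $A_t=T_tAT_t$, $T_t=(1-t)\mathrm{Id}+tT$, the Fisher information of $\mathcal{N}(0,C)$ is $\tr(C^{-1})$, hence $I_0(\mu,\nu)=\int_0^1\tr(A_t^{-1})\,\dd t$. The gap is in how you evaluate this integral, and both routes you offer fail. Route (i) rests on a false claim: $I_0$ is \emph{not} invariant under the linear change of variables $x\mapsto A^{-1/2}x$, since the Fisher information of $\mathcal{N}(0,C)$ becomes $\tr(AC^{-1})$ rather than $\tr(C^{-1})$ (already for $d=1$, $A=a$, $B=b$, the direct computation gives $(ab)^{-1/2}=\tr(S^{-1})$ while the rescaled geodesic gives $\sqrt{a/b}$ — a genuinely different, wrong answer). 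Route (ii) asserts that the scalar integrals carry the eigenvalues $s_i$ of $S$; they do not, because $A$ and $S$ need not commute: the family that can be simultaneously diagonalized is $\{T_t\}_{t\in[0,1]}$, whose eigenvalues are those of $T=A^{-1/2}SA^{-1/2}$, not of $S$. The consistency-check fallback is not a proof either: it needs the second claim of Theorem~\ref{th:bias}, whose compactness hypothesis Gaussians violate (as the paper itself notes), and it presupposes the closed form of $T_\lambda$.

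The fix is one line. By cyclicity, $\tr(A_t^{-1})=\tr(T_t^{-1}A^{-1}T_t^{-1})=\tr\bigl(A^{-1}T_t^{-2}\bigr)$, which isolates the time dependence inside the commuting family $\{T_t\}$. Diagonalizing $T$ with eigenvalues $\tau_i>0$ and using $\int_0^1\bigl((1-t)+t\tau\bigr)^{-2}\dd t=\tau^{-1}$ gives $\int_0^1T_t^{-2}\,\dd t=T^{-1}$, so $I_0(\mu,\nu)=\tr(A^{-1}T^{-1})=\tr(A^{-1}A^{1/2}S^{-1}A^{1/2})=\tr(A^{-1/2}S^{-1}A^{1/2})=\tr(S^{-1})$. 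This is precisely the paper's computation, where $T$ is denoted $H$ (the Hessian of the Brenier potential) and the identity $\int_0^1[\nabla^2\varphi_t]^{-2}\dd t=H^{-1}$ plays the role of $\int_0^1T_t^{-2}\dd t=T^{-1}$; so with this correction your argument is a valid, equivalent rewriting rather than a new route.
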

Remark in particular that the expansion in Theorem~\ref{th:bias} then gives 
$$
S_\lambda(\mu,\nu) - W_2^2(\mu,\nu) = \frac18(2I_0(\mu,\nu) - I_0(\mu,\mu) -I_0(\nu,\nu)) = \frac18(2\tr S^{-1} - \tr A^{-1} - \tr B^{-1})
$$
which is consistent, as it must, with the expansion in Section~\ref{sec:extrapolation}.

\begin{proof}
When the Brenier potential $\varphi = \frac12 x^\top Hx$ is quadratic, we have by the proof of Proposition~\ref{cor:FirstFisherBound}
$$
I_0(\mu,\nu) = \int_{\RR^d}\int_0^1 \Vert [\nabla^2\varphi_t(x)]^{-1} \nabla \log \rho_0(x)\Vert_2^2 \rho_0(x)\dd t\dd x.
$$
Putting ourselves in a basis diagonalizing $H$, the integration in time is explicit and we get
$$
I_0(\mu,\nu) = \int_{\RR^d} \Vert H^{-1/2} \nabla \log \rho_0(x)\Vert_2^2\rho_0(x)\dd x.
$$
It turns out that if $\mu = {\cal N}(0,A), \: \nu = {\cal N}(0,B)$, then $\varphi(x) = \frac{1}{2}x^THx $ where~\cite{bhatia2019bures} $$H = A^{-1/2}\Big(A^{1/2}BA^{1/2}\Big)^{1/2}A^{-1/2}$$ and thus
\[
  \begin{aligned}
    I_0(\mu,\nu) &= \int_{\RR^d} \Vert H^{-1/2}\nabla  \log \rho_0(x)\Vert_2^2\rho_0(x)\dd x.\\
    &=  \int_{\RR^d} \Vert H^{-1/2}A^{-1}x\Vert_2^2\rho_0(x)\dd x \\
    &= \Esp_{X \sim {\cal N}(0,A)}\Big[X^T\big(A^{-1}H^{-1}A^{-1}\big)X\Big] \\
    &= \tr\big( A^{-1}H^{-1}A^{-1}A\big)  =  \tr\big( A^{-1}H^{-1}\big) =  \tr\big( \big(A^{1/2}BA^{1/2}\big)^{-1/2}\big)
  \end{aligned}
\]
where the last row is obtained using \cite[Eq.~(378)]{petersen2012matrix}.
\end{proof}

\section{Computational complexity of Sinkhorn's algorithm}\label{app:computational}
In this appendix, we recall the computational complexity analysis of Sinkhorn's algorithm from~\cite{dvurechensky2018computational}, in order to state Proposition~\ref{prop:computationalcomplexity} exactly as per our needs (while this result is implicit in~\cite{dvurechensky2018computational}).  There is nothing specific in this analysis about the squared-distance cost so we just assume that the cost $c:\RR^d\times \RR^d\to \RR$ is continuous, keeping in mind that in our case, $c(x,y)=\frac12 \Vert y-x\Vert_2^2$. We also consider a compact set $\Xx \subset \RR^d$ and measures $\mu,\nu \in \Pp(\Xx)$ which are concentrated on this set. We consider the dual objective function of entropy regularized optimal transport~\cite{peyre2019computational}:
\begin{equation}\label{eq:dualentropyregularized}
F_\lambda(u,v) = \int_{\RR^d} u \dd\mu + \int_{\RR^d} v \dd\nu +\lambda \Big(1 - \int_{(\RR^d)^2} \exp((u(x)+v(y)-c(x,y))/\lambda)\dd\mu(x)\dd\nu(y) \Big).
\end{equation}
By Fenchel duality, we have with $c(x,y)=\frac12 \Vert y-x\Vert_2^2$ that 
\begin{align}\label{eq:dualentropic}
\frac12 T_\lambda(\mu,\nu) = \max_{u,v} F_\lambda(u,v) 
\end{align}
where the maximum is over pairs of continuous real-valued functions on $\RR^d$, $(u,v)\in \mathcal{C}(\Xx)^2$. Sinkhorn's algorithm is alternate maximization on $u$ and $v$: it starts with $u_0,v_0 =0$ and defines,
\begin{align*}
    u_{k+1} &= u_k -\lambda \log \int_{\RR^d} \exp((u_k(\cdot)+v_k(y)-c(\cdot,y))/\lambda)\dd\nu(y), & v_{k+1}&=v_k &&\text{if $k$ is odd}\\
    v_{k+1} &= v_k - \lambda \log \int_{\RR^d} \exp((u_k(x)+v_k(\cdot)-c(x,\cdot))/\lambda)\dd\mu(x), & u_{k+1}&=u_k &&\text{if $k$ is even}.
\end{align*}
This form of the iterations that distinguishes between even and odd updates is convenient for the analysis, but beware that the index  $k$ here is twice the index appearing in Proposition~\ref{prop:sinkhornscomputational}, so the statements are adjusted consequently.
We also introduce $\gamma_k = \exp((u_k(x)+v_k(y)-c(x,y))/\lambda)\mu \otimes \nu$, which is such that the update can be written: $u_{k+1} = u_k +\lambda \log (\dd\mu/\dd \pi^1_\# \gamma_k)$ if $k$ odd and $v_{k+1} = v_k +\lambda \log (\dd\nu/\dd \pi^2_\# \gamma_k)$ if $k$ even, where $\pi^1_\#\gamma$ is the marginal of $\gamma$ on the first factor of $\RR^d\times \RR^d$ and $\pi^2_\#\gamma$ its marginal on the second. The following is a rearrangement of some intermediate results in~\cite{dvurechensky2018computational} in a simplified form which is sufficient to our purpose. 

\begin{proposition}\label{prop:convergencesinkhorn}
Assume $c\geq 0$ and let $\Vert c\Vert_\infty = \sup_{(x,y)\in \Xx^2} c(x,y)$. Sinkhorn's iterates satisfy, for $k\geq 1$,
$$
0 \leq \max_{u,v} F_\lambda(u,v) - F_\lambda(u_k,v_k)\leq \frac{2 \Vert c\Vert^2_\infty}{\lambda k}
$$
\end{proposition}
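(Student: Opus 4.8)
The plan is to follow the standard ``sufficient decrease plus distance-to-optimum'' argument for alternating maximization of a concave objective, specialized to $F_\lambda$. First I would record two structural facts about $F_\lambda$. (i) \emph{Optimality of partial maximizers}: the Sinkhorn update is exact block maximization, so after an odd step $\partial_u F_\lambda(u_{k},v_{k}) = 0$ in the sense that the first marginal of $\gamma_k$ equals $\mu$, and similarly after an even step the second marginal of $\gamma_k$ equals $\nu$; consequently at such iterates $F_\lambda(u_k,v_k) = \int u_k\,\dd\mu + \int v_k\,\dd\nu$ (the exponential term contributes exactly $\lambda(1-1)=0$). (ii) \emph{A bound on the optimal potentials}: by the Fenchel–Rockafellar characterization, the optimal $(u^\star,v^\star)$ can be taken to satisfy $u^\star(x) = -\lambda\log\int \exp((v^\star(y)-c(x,y))/\lambda)\,\dd\nu(y)$ and the symmetric relation, which together with $0\le c\le \|c\|_\infty$ forces the oscillation of each optimal potential to be at most $\|c\|_\infty$; after the usual normalization we get $\|u^\star\|_\infty, \|v^\star\|_\infty \lesssim \|c\|_\infty$.

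Next I would establish the \textbf{one-step progress inequality}. Writing $\delta_k \eqdef \max_{u,v}F_\lambda(u,v) - F_\lambda(u_k,v_k)\ge 0$, the key is to lower bound $\delta_k - \delta_{k+1} = F_\lambda(u_{k+1},v_{k+1}) - F_\lambda(u_k,v_k)$ by something like $c_0\,\delta_{k+1}^2/\|c\|_\infty^2$. This is done via the Pinsker-type computation in~\cite{dvurechensky2018computational}: the increase at an odd step equals $\lambda\,H(\mu \,\|\, \pi^1_\#\gamma_k)$ (a relative entropy between probability measures, since $\pi^1_\#\gamma_k$ is normalized at the previous even step), and by Pinsker's inequality this is at least $\tfrac{\lambda}{2}\|\mu-\pi^1_\#\gamma_k\|_{TV}^2$. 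One then relates the marginal violation $\|\mu-\pi^1_\#\gamma_k\|_{TV}$ to the suboptimality $\delta_k$: using concavity of $F_\lambda$ and the bound on $\|u^\star\|_\infty,\|v^\star\|_\infty$, $\delta_k \le \langle \nabla F_\lambda(u_k,v_k), (u^\star,v^\star)-(u_k,v_k)\rangle \le 2\|u^\star - u_k\|_\infty$-type terms times the marginal gap, giving $\delta_k \lesssim \|c\|_\infty \cdot (\text{marginal violation})$. Combining, $\delta_k - \delta_{k+1} \gtrsim \lambda\,\delta_{k+1}^2/\|c\|_\infty^2$ (care is needed with whether the relevant marginal gap controls $\delta_k$ or $\delta_{k+1}$; the clean version uses that after the update the \emph{other} marginal is exact, so the full suboptimality is carried by the not-yet-updated marginal, which is what the next step will fix).

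Finally I would run the \textbf{recursion}. From $\delta_{k+1} \le \delta_k - c_1\lambda\delta_{k+1}^2/\|c\|_\infty^2$ with $c_1$ an absolute constant, dividing by $\delta_k\delta_{k+1}$ gives $\frac{1}{\delta_k} \le \frac{1}{\delta_{k+1}} - c_1\lambda\frac{\delta_{k+1}}{\delta_k\|c\|_\infty^2} \le \frac{1}{\delta_{k+1}} - c_1\lambda/\|c\|_\infty^2$ using $\delta_{k+1}\le\delta_k$, i.e. $1/\delta_{k+1} \ge 1/\delta_k + c_1\lambda/\|c\|_\infty^2$. Telescoping from a suitable starting index yields $1/\delta_k \gtrsim k\lambda/\|c\|_\infty^2$, hence $\delta_k \lesssim \|c\|_\infty^2/(\lambda k)$; tracking the constants as in~\cite{dvurechensky2018computational} gives the stated bound with constant $2$. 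The main obstacle is the middle step: cleanly relating the objective suboptimality $\delta_k$ to the marginal constraint violation with a constant of the right order in $\|c\|_\infty/\lambda$ (rather than a worse power), which is where the normalization of the optimal potentials and the precise bookkeeping of which marginal is exact after each half-step are essential; everything else is Pinsker plus a telescoping sum. Since this is exactly the content of~\cite{dvurechensky2018computational}, I would cite their intermediate lemmas and only re-derive the rearrangement needed to obtain the displayed inequality.
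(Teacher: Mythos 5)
Your proposal follows essentially the same route as the paper's proof: exact block maximization makes one marginal of $\gamma_k$ exact and reduces $F_\lambda$ to the linear part, the per-step gain equals $\lambda$ times a relative entropy which Pinsker converts to a squared total-variation gap, the suboptimality is bounded by $\Vert c\Vert_\infty$ times that same gap via concavity and the oscillation bound on the potentials (the paper's Lemma~\ref{lem:boundsinkhorn}), and a telescoping sum of reciprocals finishes. The only slip is in your recursion: the progress inequality must carry the \emph{pre-update} suboptimality, $\Delta_k-\Delta_{k+1}\geq \tfrac{\lambda}{2\Vert c\Vert_\infty^2}\Delta_k^2$ (which is what the marginal gap of $\gamma_k$ controls, since the other marginal of $\gamma_k$ is already exact); with $\Delta_{k+1}^2$ on the right, dividing by $\Delta_k\Delta_{k+1}$ leaves the factor $\Delta_{k+1}/\Delta_k\leq 1$, which points the wrong way, whereas with $\Delta_k^2$ the factor is $\Delta_k/\Delta_{k+1}\geq 1$ and the telescoping goes through — a point you flag yourself and which the cited bookkeeping resolves.
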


\begin{proof}
First, remark that the iterations are such that $\int \dd\gamma_k=1$ for $k\geq 1$, so it holds $F_\lambda(u,v) = \int u \dd\mu + \int v \dd\nu$ for $(u,v)=(u_k,v_k)$ and also for any maximizer $(u,v)=(u^*,v^*)$. The key of the proof is the following equality first noticed by~\cite{altschuler2017near}. If $k$ is odd, then
\begin{align*}
    F_\lambda(u_{k+1},v_{k+1}) - F_\lambda(u_k,v_k) &= -\lambda \int \log \Big(\int \exp((u_k(x)+v_k(y)-c(x,y))/\lambda)\dd\nu(y)\Big)\dd\mu(x)\\
    &=\lambda \int \log(\dd\mu/\dd \pi^1_\# \gamma_k)\dd\mu=\lambda H(\mu, \pi^1_\# \gamma_k).
\end{align*}
Let us define $\Delta_k = F_\lambda(u^*,v^*)- F_\lambda(u_k,v_k)\geq 0$. Using Pinsker's inequality and Lemma~\ref{lem:boundsinkhorn} it follows
$$
\Delta_{k} - \Delta_{k+1} \geq \frac{\lambda}{2} \Vert \mu -  \pi^1_\#\gamma_k\Vert_1^2 \geq \frac{\lambda}{2\Vert c\Vert^2_\infty} \Delta_k^2.
$$
We can similarly prove the same inequality for $k$ even. We conclude as in the usual proof of gradient descent for smooth functions~\cite[Thm. 2.1.14]{nesterov2013introductory}: by dividing by $\Delta_k\Delta_{k+1}$ we have
$$
\frac{1}{\Delta_{k+1}} - \frac{1}{\Delta_k} \geq \frac{\lambda}{2\Vert c\Vert^2_\infty}\frac{\Delta_k}{\Delta_{k+1}} \geq \frac{\lambda}{2\Vert c\Vert^2_\infty}.
$$
Summing these inequalities yields a telescopic sum and we get $1/\Delta_k \geq \lambda k / (2\Vert c\Vert^2_\infty)$ which allows to conclude.
\end{proof}

From this analysis, we deduce the following complexity to approximate $T_\lambda$ and $T_0$ using Sinkhorn's iterations, adapted from~\cite{dvurechensky2018computational}.
\begin{proposition}\label{prop:computationalcomplexity}
Assume that $\mu_n = \sum_{i=1}^n p_i \delta_{x_i}$ and $\nu_n = \sum_{j=1}^n q_j \delta_{y_j}$ are discrete measures with $n$ atoms such that $p_i, q_j \geq \alpha/n$ for some $\alpha>0$. Then Sinkhorn's algorithm returns an $\epsilon$-accurate estimation of $T_\lambda(\mu,\nu)$ in time $O(n^2 \Vert c\Vert_\infty^2 /(\lambda \epsilon))$. Moreover, fixing $\lambda = \epsilon/4(\log(n)+\log(1/\alpha))$, it returns an $\epsilon$-accurate estimation of $T_0(\mu,\nu)$ in $O(n^2\log(n) \Vert c\Vert_\infty^2/\epsilon^2)$ operations.
\end{proposition}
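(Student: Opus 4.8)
The plan is to repackage the convergence estimate of Proposition~\ref{prop:convergencesinkhorn} together with the classical bound on the entropic bias, taking care to first translate the abstract dual gap into an error on the scalar output $\hat T^{(k)}_{\lambda,n}$.

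First I would note that for $k\geq 1$ the Sinkhorn update forces the coupling $\gamma_k$ to have unit total mass, so that $F_\lambda(u_k,v_k) = \sum_i u_k(x_i) p_i + \sum_j v_k(y_j) q_j$; up to the index-doubling convention recalled after Proposition~\ref{prop:convergencesinkhorn}, this equals $\tfrac12 \hat T^{(k)}_{\lambda,n}$. Since $\tfrac12 T_\lambda(\mu_n,\nu_n) = \max_{u,v} F_\lambda(u,v)$ by the Fenchel duality~\eqref{eq:dualentropic}, Proposition~\ref{prop:convergencesinkhorn} yields
$$
0 \;\le\; T_\lambda(\mu_n,\nu_n) - \hat T^{(k)}_{\lambda,n} \;=\; 2\Big(\max_{u,v}F_\lambda(u,v) - F_\lambda(u_k,v_k)\Big) \;\le\; \frac{4\,\Vert c\Vert^2_\infty}{\lambda k}\,.
$$
Hence $k = \lceil 4\Vert c\Vert^2_\infty/(\lambda\epsilon)\rceil$ iterations suffice to reach an $\epsilon$-accurate estimate of $T_\lambda(\mu_n,\nu_n)$. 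Each iteration in~\eqref{eq:sinkhorniteration} evaluates $n$ log-sum-exp expressions, each a sum of $n$ terms, hence costs $O(n^2)$ arithmetic operations; multiplying by the iteration count gives the stated $O(n^2\Vert c\Vert^2_\infty/(\lambda\epsilon))$ bound.

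For the estimation of $T_0$ I would insert the population-regularized quantity and use the triangle inequality,
$$
\big\vert \hat T^{(k)}_{\lambda,n} - T_0(\mu_n,\nu_n)\big\vert \;\le\; \big\vert \hat T^{(k)}_{\lambda,n} - T_\lambda(\mu_n,\nu_n)\big\vert + \big\vert T_\lambda(\mu_n,\nu_n) - T_0(\mu_n,\nu_n)\big\vert\,,
$$
where the first term is handled as above. For the entropic bias, plugging the optimal unregularized plan $\gamma^\star$ into the objective of~\eqref{eq:entropycost} and using $H(\gamma,\mu_n\otimes\nu_n)\ge 0$ gives $0 \le T_\lambda - T_0 \le 2\lambda H(\gamma^\star,\mu_n\otimes\nu_n)$; since every entry of $\mu_n\otimes\nu_n$ is at least $\alpha^2/n^2$ while $\gamma^\star$ has entries in $[0,1]$, one obtains $H(\gamma^\star,\mu_n\otimes\nu_n) \le 2\log(n/\alpha)$, so that $\vert T_\lambda - T_0\vert \le 4\lambda\log(n/\alpha)$. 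Choosing $\lambda = \epsilon/(4(\log n + \log(1/\alpha)))$ makes this term $O(\epsilon)$, and substituting this $\lambda$ into the iteration count of the first part gives $k = O(\Vert c\Vert^2_\infty\log(n/\alpha)/\epsilon^2)$, hence a total running time $O(n^2\log(n)\Vert c\Vert^2_\infty/\epsilon^2)$ after absorbing the $\log(1/\alpha)$ factor.

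I do not anticipate a genuine obstacle, as the statement is essentially a bookkeeping rearrangement of the analysis of~\cite{dvurechensky2018computational}. The only points needing care are: keeping track of the factor-of-two index convention between Proposition~\ref{prop:convergencesinkhorn} and Proposition~\ref{prop:sinkhornscomputational}; checking that $\gamma_k$ has unit mass for $k\ge 1$ so that the dual value collapses to the linear estimator $\tfrac12\hat T^{(k)}_{\lambda,n}$; and the elementary entropy estimate $H(\gamma^\star,\mu_n\otimes\nu_n)\le 2\log(n/\alpha)$ together with splitting the target accuracy between the optimization and the regularization errors when fixing $\lambda$.
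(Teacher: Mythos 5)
Your proposal is correct and follows essentially the same route as the paper: the first claim is Proposition~\ref{prop:convergencesinkhorn} combined with the $O(n^2)$ per-iteration cost, and the second claim is the entropic-bias bound $0\leq T_\lambda-T_0\leq 2\lambda H(\gamma^\star,\mu_n\otimes\nu_n)\leq 4\lambda\log(n/\alpha)$ with the stated choice of $\lambda$. The extra details you supply (unit mass of $\gamma_k$ for $k\geq 1$, the factor-of-two index convention, and the elementary entropy estimate) are exactly the bookkeeping the paper leaves implicit, and they check out.
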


\begin{proof}
The first claim is a direct consequence of Proposition~\ref{prop:convergencesinkhorn} since when $\mu$ and $\nu$ have a finite support of size $n$, an iteration of Sinkhorn can be performed with $O(n^2)$ operations. The second claim follows from the bound
$$
0\leq T_\lambda(\mu,\nu) - T_0(\mu,\nu) \leq 2\lambda H(\gamma^*, \mu\otimes \nu) \leq 4 \lambda (\log n +\log(1/\alpha))
$$
where $\gamma^*$ is the optimal transport plan for $T_0$.
\end{proof}

\begin{lemma}\label{lem:boundsinkhorn}
Under the assumptions and notations of Proposition~\ref{prop:convergencesinkhorn} it holds 
$$
\Delta_k \leq \Vert c\Vert_\infty \Big( \Vert \mu -  \pi^1_\#\gamma_k\Vert_1 + \Vert \nu -  \pi^2_\#\gamma_k\Vert_1\Big)
$$
where $\Vert \mu\Vert_1 \eqdef \sup_{\Vert u\Vert_\infty \leq 1}\int u(x)\dd\mu(x)$ denotes the total variation norm in the space of measures.
\end{lemma}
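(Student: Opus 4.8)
The plan is to combine the concavity of the dual functional $F_\lambda$ with the shift invariance $(u,v)\mapsto(u+t,v-t)$ of the problem and with an elementary oscillation bound for soft $c$-transforms. Recall that $\Delta_k = \max_{u,v}F_\lambda(u,v) - F_\lambda(u_k,v_k)$, and fix a maximizer $(u^*,v^*)$ of $F_\lambda$ of soft-$c$-transform form (such a maximizer exists, e.g.\ as a limit point of the iterates), so that $u^* = -\lambda\log\int\exp((v^*(y)-c(\cdot,y))/\lambda)\,\dd\nu(y)$ and symmetrically for $v^*$. The first step is to observe that $F_\lambda$ is concave on $\mathcal{C}(\Xx)^2$: its only nonlinear part, $-\lambda\int\exp((u(x)+v(y)-c(x,y))/\lambda)\,\dd\mu(x)\dd\nu(y)$, is a concave function of $(u,v)$, being minus an exponential of an affine map integrated against a positive measure. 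Moreover $F_\lambda$ is Gateaux differentiable with partial gradients $\mu-\pi^1_\#\gamma$ and $\nu-\pi^2_\#\gamma$ at $(u,v)$, where $\gamma = \exp((u(x)+v(y)-c(x,y))/\lambda)\,\mu\otimes\nu$ (differentiation under the integral sign is legitimate since $\Xx$ is compact). Applying the concavity inequality $F_\lambda(u^*,v^*)\leq F_\lambda(u_k,v_k) + \langle\nabla F_\lambda(u_k,v_k),(u^*-u_k,v^*-v_k)\rangle$ then yields
$$
\Delta_k \leq \langle \mu-\pi^1_\#\gamma_k,\, u^*-u_k\rangle + \langle \nu-\pi^2_\#\gamma_k,\, v^*-v_k\rangle .
$$

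Next I would exploit that in the setting of Proposition~\ref{prop:convergencesinkhorn} at least one Sinkhorn update has been performed, so $\int\dd\gamma_k = 1$ and hence the signed measures $\mu-\pi^1_\#\gamma_k$ and $\nu-\pi^2_\#\gamma_k$ both have total mass zero. Therefore $\langle\mu-\pi^1_\#\gamma_k,u^*-u_k\rangle$ is unchanged when $u^*-u_k$ is replaced by $u^*-u_k-t$ for any constant $t$, and choosing $t$ equal to the midpoint of the range of $u^*-u_k$ gives $\|u^*-u_k-t\|_\infty = \tfrac12\operatorname{osc}(u^*-u_k)\leq\tfrac12(\operatorname{osc}(u^*)+\operatorname{osc}(u_k))$, and similarly for the $v$-term. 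By the dual characterisation of the total variation norm, $\langle\mu-\pi^1_\#\gamma_k,u^*-u_k-t\rangle\leq\|u^*-u_k-t\|_\infty\,\|\mu-\pi^1_\#\gamma_k\|_1$.

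It then only remains to bound the oscillations by $\|c\|_\infty$. Every iterate $u_k,v_k$ with $k\geq 1$ is either identically $0$ (the trivial case $u_1=0$) or a soft $c$-transform $w(x)=-\lambda\log\int\exp((g(y)-c(x,y))/\lambda)\,\dd\eta(y)$ of a bounded function $g$ against a probability measure $\eta$; the same holds for $u^*$ and $v^*$ by our choice of maximizer. Since $0\leq c\leq\|c\|_\infty$, for any $x,x'$ one has $\exp((g(y)-c(x,y))/\lambda) = \exp((g(y)-c(x',y))/\lambda)\exp((c(x',y)-c(x,y))/\lambda)$ with $|c(x',y)-c(x,y)|\leq\|c\|_\infty$, so the ratio of the two integrals defining $w(x)$ and $w(x')$ lies in $[e^{-\|c\|_\infty/\lambda},e^{\|c\|_\infty/\lambda}]$ and hence $|w(x)-w(x')|\leq\|c\|_\infty$, i.e.\ $\operatorname{osc}(w)\leq\|c\|_\infty$. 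Substituting $\operatorname{osc}(u^*),\operatorname{osc}(u_k),\operatorname{osc}(v^*),\operatorname{osc}(v_k)\leq\|c\|_\infty$ into the two bounds of the previous paragraph and adding them gives exactly $\Delta_k\leq\|c\|_\infty(\|\mu-\pi^1_\#\gamma_k\|_1 + \|\nu-\pi^2_\#\gamma_k\|_1)$.

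I expect the only delicate point to be the bookkeeping that legitimises the shift step: one has to check that $\gamma_k$ is mass-balanced for $k\geq 1$ — which is where the precise form of the update $u_{k+1}=u_k+\lambda\log(\dd\mu/\dd\pi^1_\#\gamma_k)$ (and its counterpart) is used — and that a maximizer of $F_\lambda$ of soft-$c$-transform form exists, so that $\operatorname{osc}(u^*),\operatorname{osc}(v^*)\leq\|c\|_\infty$. The concavity inequality, the differentiation under the integral sign, and Hölder's inequality are all routine.
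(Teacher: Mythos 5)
Your proof is correct and follows essentially the same route as the paper's: the concavity inequality with gradient $(\mu-\pi^1_\#\gamma_k,\nu-\pi^2_\#\gamma_k)$, the shift by a constant justified by the mass balance $\int\dd\gamma_k=1$, and the oscillation bound $\operatorname{osc}(w)\leq\Vert c\Vert_\infty$ for soft $c$-transforms. Your handling of the edge case $u_1=0$ and of the existence of a maximizer in soft-$c$-transform form is slightly more explicit than the paper's, but the argument is the same.
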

\begin{proof}
Remark that $F_\lambda$ is differentiable in $(u,v)$ with gradient $(\mu - \pi^1_\# \gamma_k, \nu - \pi^2_\# \gamma_k)$ at $(u_k,v_k)$. The concavity inequality then gives
$$
\Delta_k \leq \int (u^*-u_k)\dd(\mu - \pi^1_\# \gamma_k) + \int (v^*-v_k)\dd(\nu - \pi^2_\# \gamma_k).
$$
Also, for any $u\in \mathcal{C}(\Xx)$ and $\alpha = (\max u + \min u)/2$, using the fact that $\int \mu = \int \pi^1_\# \gamma_k$, we have
$$
\int u\dd(\mu - \pi^1_\# \gamma_k) = \int (u-\alpha)\dd(\mu - \pi^1_\# \gamma_k) \leq \frac12 (\max u - \min u) \Vert\mu - \pi^1_\# \gamma_k\Vert_1.
$$
Finally, for $u=u^*$ or $u=u_k$ for $k\geq 1$, we have, for some $v\in \mathcal{C}(\Xx)$, and for all $x,x'\in \Xx$
$$
u(x) = -\lambda \log \int \exp((v(y)-c(x,y))/\lambda)\dd\nu(y) \leq \Vert c\Vert_\infty - u(x')
$$
because $c(x,y)\leq c(x',y)+ \Vert c\Vert_\infty$. Thus $(\max u -\min u)/2 \leq \Vert c\Vert_\infty/2$. The conclusion follows by bounding all terms this way.
\end{proof}

\section{Properties of the plug-in estimator}\label{app:plugin}
In this section we prove Theorem~\ref{thm:plug-in} about the rate of convergence of $T_0(\hat \mu_n,\hat \nu_n)$ to $T_0(\mu,\nu)$ (we recall that, by definition $W_2^2(\mu,\nu)=T_0(\mu,\nu)$). We start with the following lemma which bounds the estimation error by simpler quantities. Note that we consider measures on the centered ball of radius $R$ in $\RR^d$, for some $R>0$, which is without loss of generality compared to other bounded sets since $T_\lambda(\mu,\nu)$ is invariant by translation of both measures. In the following lemma $\mu_n,\nu_n \in \Pp(\RR^d)$ can be unrelated to $\mu,\nu$ but this lemma will later be applied to the case where $\mu_n,\nu_n$ are empirical distributions of $n$ samples, hence our choice of notation.
\begin{lemma}\label{lem:plugin_decomposition}
Let $\mu,\nu, \mu_n,\nu_n \in \Pp(\RR^d)$ be concentrated on the centered ball of radius $R$. Then it holds
\begin{align*}
\Big\vert \frac12 T_0(\mu,\nu) - \frac12 T_0(\mu_n,\nu_n)\Big\vert 
&\leq
\Big\vert \frac12 \int \Vert x\Vert_2^2 \dd (\mu-\mu_n)(x)\Big\vert + \Big\vert \frac12 \int \Vert x\Vert_2^2 \dd (\nu-\nu_n)(x)\Big\vert \\
&+
\sup_{\varphi \in \mathcal{F}_R} \Big\vert \int \varphi \dd(\mu_n -\mu)\Big\vert + \sup_{\varphi \in \mathcal{F}_R} \Big\vert \int \varphi \dd(\nu_n -\nu)\Big\vert
\end{align*}
where $\mathcal{F}_R$ is the set of convex and $R$-Lipschitz functions on the ball of radius $R$.
\end{lemma}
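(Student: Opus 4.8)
The plan is to reduce everything to Kantorovich duality, written in the form that makes the dependence on the measures affine plus a concave remainder that is controlled by integration against $\mathcal{F}_R$.

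First I would recall that, for the cost $c(x,y) = \frac12\Vert x-y\Vert_2^2$, Kantorovich duality gives $\frac12 T_0(\mu,\nu) = \sup\{\int f\,\dd\mu + \int g\,\dd\nu : f(x)+g(y)\leq c(x,y)\}$, with no duality gap since $c$ is continuous and the measures are compactly supported. Expanding $c(x,y) = \frac12\Vert x\Vert_2^2 - \langle x,y\rangle + \frac12\Vert y\Vert_2^2$ and substituting $\tilde f(x) = \frac12\Vert x\Vert_2^2 - f(x)$, $\tilde g(y) = \frac12\Vert y\Vert_2^2 - g(y)$ turns the dual problem into
$$
\tfrac12 T_0(\mu,\nu) = \tfrac12\int \Vert x\Vert_2^2\,\dd\mu + \tfrac12\int \Vert y\Vert_2^2\,\dd\nu - G(\mu,\nu),
$$
where $G(\mu,\nu) \eqdef \inf\{\int\tilde f\,\dd\mu + \int\tilde g\,\dd\nu : \tilde f(x)+\tilde g(y)\geq\langle x,y\rangle \text{ on the ball of radius }R\}$, the infimum being over pairs of functions with finite integrals.

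Second, I would show that in the definition of $G$ one may restrict to $\tilde f,\tilde g\in\mathcal{F}_R$ without changing the value. Starting from any admissible pair, replace $\tilde g$ by $y\mapsto\sup_{\Vert x\Vert_2\leq R}(\langle x,y\rangle-\tilde f(x))$ and then $\tilde f$ by $x\mapsto\sup_{\Vert y\Vert_2\leq R}(\langle x,y\rangle-\tilde g(y))$: each is a supremum of affine functions with gradients lying in the ball of radius $R$, hence convex and $R$-Lipschitz there, so both land in $\mathcal{F}_R$; feasibility is preserved at each step (by construction $\tilde f(x)+\tilde g_1(y)\geq\langle x,y\rangle$, and likewise after the second step); and the objective can only decrease since $\tilde g_1\leq\tilde g$ and $\tilde f_1\leq\tilde f$ on the ball. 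This is precisely the point where the normalization matters: working with $\frac12\Vert x-y\Vert_2^2$ (equivalently $\frac12 T_0$) is what yields gradients of size $R$ rather than $2R$, matching the convention used elsewhere in the paper.

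Third, for arbitrary measures I would bound $\lvert G(\mu,\nu) - G(\mu_n,\nu_n)\rvert$: pick a pair $(\tilde f,\tilde g)\in\mathcal{F}_R^2$ achieving (or nearly achieving) the infimum for one of the two — attainment follows from a standard equicontinuity/compactness argument, the competitors being uniformly bounded and equi-Lipschitz — use it as a competitor for the other, and observe that the resulting gap equals $\int\tilde f\,\dd(\mu-\mu_n) + \int\tilde g\,\dd(\nu-\nu_n)$, each term bounded by the corresponding $\sup_{\varphi\in\mathcal{F}_R}\lvert\int\varphi\,\dd(\cdot)\rvert$. Symmetrizing gives the two-sided bound on $G$, and combining it with the displayed identity for $\frac12 T_0$ and the triangle inequality yields the lemma. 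The main obstacle is really the first two steps — invoking strong duality cleanly and checking that the $c$-transform reduction lands inside $\mathcal{F}_R$ with the correct Lipschitz constant — after which the estimate on $G$ is routine.
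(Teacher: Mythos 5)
Your proposal is correct and follows essentially the same route as the paper: Kantorovich duality, the change of variables turning the cost into the inner product so that double conjugation lands in $\mathcal{F}_R$ (convex, $R$-Lipschitz), and the standard swap-the-optimizer argument to compare the two dual infima. The only cosmetic difference is that you perturb both marginals at once and symmetrize, whereas the paper changes one marginal at a time and concludes with a triangle inequality; both yield the same bound.
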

\begin{proof}
The first part of the proof is fairly classical. By Kantorovich duality, we have
\begin{align*}
 \frac12 T_0(\mu,\nu) &= \max_{u,v\in \mathcal{C}(\Xx)} \int u \dd\mu + \int v \dd\nu
\end{align*}
where $\Xx$ is the closed ball of radius $R$ and under the constraint that $u(x)+v(y)\leq \frac12 \Vert y-x\Vert_2^2$ for all $(x,y) \in \Xx^2$ and there exists a maximizer~\cite{santambrogio2015optimal}. By expanding the square and changing the unknown $(\varphi,\psi)=(\frac12 \Vert \cdot \Vert_2^2 -u,\frac12 \Vert \cdot \Vert_2^2 -v )$, we can equivalently write
\begin{align*}
\frac12 T_0(\mu,\nu) &= \frac12 \int \Vert x\Vert_2^2 \dd \mu(x) +\frac12 \int \Vert x\Vert_2^2\dd\nu(x) - \min_{\varphi,\psi\in \mathcal{C}(\Xx)} \left( \int \varphi \dd\mu + \int \psi \dd\nu\right)
\end{align*}
under the constraint that $\varphi(x)+\psi(y)\geq \langle x, y\rangle$ for all $(x,y)\in \Xx^2$. In the minimization problem, fix an arbitrary $\psi\in \mathcal{C}(\Xx)$ and notice that the value of the objective cannot increase if we replace $\varphi$ by $\psi^*$ defined by $\psi^*(x) = \max_{y\in \Xx} \langle x,y\rangle - \psi(y)$ and the couple $(\psi^*,\psi)$ still satisfies the constraint. Repeating this process by now fixing $\psi^*$, we find that the couple $(\psi^{*},\psi^{**})$ satisfies the constraint and has a smaller objective value. 
Now, as a supremum of affine functions, $\psi^*$ is convex. For any $y_0\in \Xx$, let $x_0$ be such that $\psi^*(y_0) = \langle x_0,y_0\rangle -\psi(x_0)$, and observe that for all $y\in \Xx$
\begin{align*}
\left\{
    \begin{aligned}
    \psi^*(y_0) &= \langle x_0, y_0\rangle - \psi(y_0)\\
    \psi^*(y) &\geq \langle x_0, y\rangle - \psi(y)
    \end{aligned}
    \right. 
    &&\Rightarrow &&
    \psi^*(y_0) - \psi^*(y) \leq \langle x_0, y_0-y\rangle  \leq R\Vert y_0-y\Vert_2.
\end{align*}
Since $y_0$ and $y$ are arbitrary, this shows that $\psi^*$ is $R$-Lipschitz, i.e., $\vert \psi^*(y)-\psi^*(y')\vert \leq R\Vert y-y'\Vert_2$ for all $(y,y')\in \Xx^2$. We thus have 
$$
\frac12 T_0(\mu,\nu) = \frac12 \int \Vert x\Vert_2^2 \dd \mu(x) +\frac12 \int \Vert x\Vert_2^2\dd\nu(x) - \min_{\varphi \in \mathcal{F_R}} \Big( \int \varphi \dd\mu + \int \varphi^* \dd\nu\Big).
$$
The rest of the proof is inspired by~\cite[Prop. 2]{mena2019statistical} (which analyzes the sample complexity of $T_\lambda$ for $\lambda>0$). Let us denote $\mathcal{S}_{\mu,\nu}(\varphi) \eqdef \int \varphi \dd\mu +\int \varphi^*\dd\nu$ and $\varphi_{\mu,\nu}$ the minimizer of $\mathcal{S}_{\mu,\nu}$ over $\mathcal{F}_R$. By optimality, we have
$$
\mathcal{S}_{\mu,\nu}(\varphi_{\mu_n,\nu}) - \mathcal{S}_{\mu_n,\nu}(\varphi_{\mu_n,\nu}) \leq \mathcal{S}_{\mu,\nu}(\varphi_{\mu,\nu}) - \mathcal{S}_{\mu_n,\nu}(\varphi_{\mu_n,\nu}) \leq\mathcal{S}_{\mu,\nu}(\varphi_{\mu,\nu}) - \mathcal{S}_{\mu_n,\nu}(\varphi_{\mu,\nu}).
$$
It follows that 
\begin{align*}
\vert \mathcal{S}_{\mu,\nu}(\varphi_{\mu,\nu}) - \mathcal{S}_{\mu_n,\nu}(\varphi_{\mu_n,\nu})\vert 
&\leq 
\max \Big\{ \vert \mathcal{S}_{\mu,\nu}(\varphi_{\mu_n,\nu}) - \mathcal{S}_{\mu_n,\nu}(\varphi_{\mu_n,\nu})\vert , \vert \mathcal{S}_{\mu,\nu}(\varphi_{\mu,\nu}) - \mathcal{S}_{\mu_n,\nu}(\varphi_{\mu,\nu})\vert \Big\}\\
&\leq \sup_{\varphi \in \mathcal{F}_R}\vert \mathcal{S}_{\mu,\nu}(\varphi) - \mathcal{S}_{\mu_n,\nu}(\varphi)\vert = \sup_{\varphi \in \mathcal{F}_R} \Big\vert \int \varphi \dd(\mu_n -\mu)\Big\vert .
\end{align*}
As a consequence, we have
$$
\Big\vert \frac12 T_0(\mu,\nu) - \frac12 T_0(\mu_n,\nu)\Big\vert \leq \Big\vert \frac12 \int \Vert x\Vert_2^2 \dd (\mu-\mu_n)(x)\Big\vert + \sup_{\varphi \in \mathcal{F}_R} \Big\vert \int \varphi \dd(\mu_n -\mu)\Big\vert.
$$
We finally conclude with the triangle inequality
$$
\vert T_0(\mu,\nu) - T_0(\mu_n,\nu_n)\vert \leq \vert T_0(\mu,\nu) - T_0(\mu_n,\nu)\vert + \vert T_0(\mu_n,\nu) - T_0(\mu_n,\nu_n)\vert
$$
and by bounding the second term in the same fashion.
\end{proof}

The next technical step is to bound the supremum of an empirical process that appears in the bound of Lemma~\ref{lem:plugin_decomposition}.
\begin{lemma}\label{lemma:chaining_bound}
Let $\mu\in \Pp(\RR^d)$ be concentrated on the ball of radius $R$ and $\hat \mu_n$ an empirical distribution of $n$ independent samples. Then it holds
$$
\mathbf{E} \Big [\sup_{\varphi \in \mathcal{F}_R} \Big\vert \int \varphi \dd(\hat \mu_n -\mu)\Big\vert \Big] \lesssim
\begin{cases}
R^2n^{-1/2} & \text{if $d<4$},\\
R^2n^{-1/2}\log(n) & \text{if $d=4$},\\
R^2 n^{-2/d} & \text{if $d>4$}
\end{cases}
$$
where the notation $\lesssim$ hides a constant depending only on $d$ and $\mathcal{F}_R$ is defined in Lemma~\ref{lem:plugin_decomposition}.
\end{lemma}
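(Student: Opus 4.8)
The plan is to combine a symmetrization inequality with Dudley's chaining, controlling the metric entropy of $\mathcal F_R$ via Bronshtein's theorem~\cite{bronshtein1976varepsilon}. First I would note that $\varphi\mapsto \int\varphi\,\dd(\hat\mu_n-\mu)$ is invariant under adding a constant to $\varphi$, so it suffices to take the supremum over the subclass $\tilde{\mathcal F}_R\eqdef\{\varphi\in\mathcal F_R:\varphi(0)=0\}$. Each such $\varphi$ satisfies $|\varphi(x)|=|\varphi(x)-\varphi(0)|\leq R\Vert x\Vert_2\leq R^2$ on the ball of radius $R$, so $\tilde{\mathcal F}_R$ has envelope $R^2$ and $\Vert\cdot\Vert_\infty$-diameter at most $2R^2$. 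The standard symmetrization inequality then gives
$$
\mathbf E\Big[\sup_{\varphi\in\tilde{\mathcal F}_R}\Big|\int\varphi\,\dd(\hat\mu_n-\mu)\Big|\Big]\leq 2\,\mathbf E\Big[\sup_{\varphi\in\tilde{\mathcal F}_R}\Big|\tfrac1n\textstyle\sum_{i=1}^n\epsilon_i\varphi(X_i)\Big|\Big],
$$
where $(\epsilon_i)$ are i.i.d.\ Rademacher variables independent of the sample $(X_i)$.

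Second, I would fix the sample and bound the conditional Rademacher average by chaining. Conditionally on $(X_i)$, the process $\varphi\mapsto\frac1n\sum_i\epsilon_i\varphi(X_i)$ is sub-Gaussian for the pseudometric $d_n(\varphi,\psi)^2=\frac1n\sum_i(\varphi(X_i)-\psi(X_i))^2\leq\Vert\varphi-\psi\Vert_\infty^2$, so its $d_n$-covering numbers are dominated by the $\Vert\cdot\Vert_\infty$-covering numbers of $\tilde{\mathcal F}_R$. Rescaling to the unit ball and normalizing functions by $R^2$, Bronshtein's theorem~\cite{bronshtein1976varepsilon} yields $\log N(\epsilon,\tilde{\mathcal F}_R,\Vert\cdot\Vert_\infty)\leq C_d\,(R^2/\epsilon)^{d/2}$ for $0<\epsilon\leq R^2$, with $C_d$ depending only on $d$. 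When $d<4$, the entropy integral $\int_0^{2R^2}\sqrt{\log N(\epsilon,\cdot,\Vert\cdot\Vert_\infty)}\,\dd\epsilon\lesssim R^{d/2}\int_0^{2R^2}\epsilon^{-d/4}\,\dd\epsilon\lesssim R^2$ is finite, and Dudley's inequality gives a conditional average $\lesssim R^2n^{-1/2}$; taking the outer expectation finishes this case.

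Third, for $d\geq 4$ I would use truncated chaining: for $\delta>0$, write $\varphi=(\varphi-\pi_\delta\varphi)+\pi_\delta\varphi$ where $\pi_\delta\varphi$ is a nearest point in a minimal $\Vert\cdot\Vert_\infty$-net of mesh $\delta$. The residual is controlled deterministically, $\big|\frac1n\sum_i\epsilon_i(\varphi-\pi_\delta\varphi)(X_i)\big|\leq\Vert\varphi-\pi_\delta\varphi\Vert_\infty\leq\delta$, while Dudley's inequality applied to the (finite) net part gives $\lesssim n^{-1/2}\int_\delta^{2R^2}\sqrt{\log N(\epsilon,\cdot,\Vert\cdot\Vert_\infty)}\,\dd\epsilon\lesssim n^{-1/2}R^{d/2}\int_\delta^{2R^2}\epsilon^{-d/4}\,\dd\epsilon$. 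For $d>4$ this integral is $\lesssim\delta^{1-d/4}$, so the total bound is $\delta+n^{-1/2}R^{d/2}\delta^{1-d/4}$, minimized at $\delta\asymp R^2n^{-2/d}$, which yields $R^2n^{-2/d}$. For $d=4$ the integral is $\lesssim R^2\log(R^2/\delta)$, so the bound is $\delta+n^{-1/2}R^2\log(R^2/\delta)$; choosing $\delta=R^2n^{-1/2}$ gives $R^2n^{-1/2}\log n$. Since all these bounds are uniform in the sample, taking the outer expectation over $(X_i)$ concludes.

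The main obstacle I anticipate is making the truncated chaining step fully precise: keeping the mesh of the chain controlled in the \emph{fixed} $\Vert\cdot\Vert_\infty$ metric so that the tail residual is deterministic and the number of links at each scale obeys Bronshtein's $n$-independent bound, and then carrying out the optimization over $\delta$ to recover the exact exponents $n^{-2/d}$ (for $d>4$) and $n^{-1/2}\log n$ (for $d=4$). One must also check that once $\tilde{\mathcal F}_R$ is normalized, the constants produced by Bronshtein's theorem depend only on $d$, so that the implicit constant in $\lesssim$ is indeed dimension-only.
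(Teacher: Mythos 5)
Your proposal is correct and follows essentially the same route as the paper's proof: reduce to the normalized subclass with $\varphi(0)=0$, symmetrize to a Rademacher average, apply the truncated form of Dudley's chaining with the $\Vert\cdot\Vert_\infty$ metric, invoke Bronshtein's covering-number bound $\log N_\infty(\mathcal F_R,u)\lesssim (u/R^2)^{-d/2}$, and optimize over the truncation level $\delta$ to get the three regimes. The only difference is cosmetic: you re-derive the truncated Dudley bound by hand via the $\delta$-net decomposition, whereas the paper cites it directly.
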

\begin{proof}
First notice that we can include in the definition of $\mathcal{F}_R$ the property that $\varphi(0)=0$ without changing the supremum. With this additional property, we in particular have that $\Vert \varphi\Vert_\infty \leq R^2$ for all $\varphi \in \mathcal{F}_R$. By a classical symmetrization argument~\cite[Thm.~4.10]{wainwright2019high}, we have
$$
\mathbf{E} \Big [\sup_{\varphi \in \mathcal{F}_R} \Big\vert \int \varphi \dd(\hat\mu_n -\mu)\Big\vert \Big] \leq 2 \underbrace{\Esp_{\sigma,X}\Big[ \sup_{\varphi \in \mathcal{F}_R}\Big\vert\frac1n \sum_{i=1}^n \sigma_i \varphi(X_i) \Big\vert \Big]}_{\mathcal{R}_n(\mathcal{F}_R,\mu)}
$$
where $\sigma_1,\dots,\sigma_n$ are independent Rademacher random variables taking the values $\{-1,+1\}$ with equal probability and $X_1,\dots,X_n$ are independent random variables with law $\mu$. This quantity $\mathcal{R}_n(\mathcal{F}_R,\mu)$ is the Rademacher complexity of $\mathcal{F}_R$ under the distribution $\mu$. It can be bounded by Dudley's chaining technique (see~\cite[Thm. 5.22]{wainwright2019high} and the associated discussion): it holds, for some universal constant $C>0$,
$$
\mathcal{R}_n(\mathcal{F}_R,\mu) \leq C \inf_{\delta>0} \Big( \delta + n^{-1/2}\int_{\delta}^{R^2} \sqrt{\log N_\infty(\mathcal{F}_R,u)}\dd u\Big)
$$
where $N_\infty(\mathcal{F}_R,u)$ is the covering number of the set $\mathcal{F}_R$ for the metric $\Vert \cdot \Vert_\infty$ at scale $u$. Then we use the covering number bound of Bronshtein~\cite{bronshtein1976varepsilon}, as reported in~\cite[Thm.~1]{guntuboyina2012} which states that there exists constants $C_1,C_2>0$ depending only on $d$ such that if $u/R^2 \leq C_1$ then
$$
\log N_\infty(\mathcal{F}_R,u) \leq C_2 (u/R^2)^{-d/2}.
$$
After a change of variable we thus have that 
$$
\mathcal{R}_n(\mathcal{F}_R,\mu) \lesssim R^2 \Big(\inf_{\delta>0} \delta + n^{-1/2}\int_\delta^1 u^{-d/4}\dd u\Big). 
$$
The claim follows by optimizing over $\delta$ which gives $\delta =0$ for $d<4$, $\delta = n^{-1/2}$ for $d=4$ and $\delta =n^{-2/d}$ for $d>4$.
\end{proof}

We are now in position to conclude the proof of Theorem~\ref{thm:plug-in}.
\begin{proof}[Proof of Theorem~\ref{thm:plug-in}]
Let us assume without loss of generality that $\mu,\nu$ are concentrated on the centered closed ball of radius $R$ in $\RR^d$ (which can be taken as $R=1/2$ under our assumptions, but let us continue with an arbitrary $R$ for explicitness of the proof). Given Lemma~\ref{lem:plugin_decomposition} and Lemma~\ref{lemma:chaining_bound}, it only remains to bound the quantity
$$
A \eqdef \Esp \Big\vert \frac12 \int \Vert x\Vert_2^2 \dd (\mu-\hat \mu_n)(x)\Big\vert
$$
and the corresponding quantity for $\nu$. Considering independent samples of the random variable $Y=\frac12 \Vert X\Vert_2^2$ where the law of $X$ is $\mu$, our goal is to bound $A = \Esp \vert \frac1n \sum_{i=1}^n Y_i -\Esp Y\vert$. By Chebyshev’s inequality and the fact that the variance of $Y$ is bounded by $R^4$, we have for all $t\geq 0$,
$$
\mathbf{P}\Big[ \big \vert \frac1n \sum_{i=1}^n Y_i - \Esp Y\big \vert \geq t \Big] \leq \min\{ 1, R^4/(nt^2)\}.
$$
Finally, by the integral representation of the expectation of a nonnegative random variable we have
$$
A = \int_0^\infty \mathbf{P}\Big[ \big \vert \frac1n \sum_{i=1}^n Y_i - \Esp Y\big \vert \geq t \Big]\dd t \leq \frac{R^2}{\sqrt{n}} + \int_{R^2n^{-1/2}}^\infty \frac{R^4}{nt^2}\dd t = 2R^2 n^{-1/2}
$$
which is sufficient to conclude. The concentration bound is proved separately in Proposition~\ref{prop:concentration_unified}.
\end{proof}


Let us now prove the concentration bound, which is a consequence of the bounded difference inequality. We give a unified proof for $T_\lambda$ and $T_0$ since the argument is similar. The result for $T_0$ was known~\cite{weed2019sharp} but we are not aware of a similar result for $\lambda>0$ (note that the concentration bound in~\cite{genevay2019sample} has an undesirable exponential dependency in $\lambda$ and the central limit theorem in~\cite{mena2019statistical} does not a priori gives the dependency in $\lambda$).
\begin{proposition}\label{prop:concentration_unified}
Assume that $\mu,\nu\in \Pp(\RR^d)$ are concentrated on a set of diameter $D$. It holds for all $t\geq 0$, $\lambda\geq 0$ and $n\geq 1$,
$$
\mathbf{P} \Big[ \big\vert T_\lambda(\mu_n,\nu_n) - \Esp [T_\lambda(\mu_n,\nu_n)]\big\vert \geq t \Big] \leq 2 \exp(-nt^2/D^4).
$$
\end{proposition}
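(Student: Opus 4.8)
The plan is to obtain this concentration bound from the bounded difference (McDiarmid) inequality, applied to the map $(X_1,\dots,X_n,Y_1,\dots,Y_n)\mapsto T_\lambda(\mu_n,\nu_n)$ seen as a function of its $2n$ independent arguments. Everything then reduces to one deterministic estimate: perturbing a single sample changes $T_\lambda(\mu_n,\nu_n)$ by at most $D^2/n$. Writing $\mu_n^z=\tfrac1n\delta_z+\rho$, where $\rho$ is the fixed empirical measure of the other $n-1$ samples of $\mu$, the claim to prove is that $\vert T_\lambda(\mu_n^z,\nu_n)-T_\lambda(\mu_n^{z'},\nu_n)\vert\le D^2/n$ for all $z,z'$ in the common (diameter-$D$) support, and symmetrically when a $Y$-sample is changed.

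To prove this I would take an optimal plan $\gamma^\star\in\Pi(\mu_n^z,\nu_n)$ (the minimum is attained, the objective being lower semicontinuous and convex over the compact transportation polytope) and build an explicit feasible competitor $\gamma'\in\Pi(\mu_n^{z'},\nu_n)$ by ``moving'' the mass-$\tfrac1n$ block that $\gamma^\star$ places over $z$ onto $z'$: split off from $\gamma^\star$ a sub-plan $\gamma^{(1)}$ with first marginal $\tfrac1n\delta_z$ (possible since $\tfrac1n\delta_z\le\mu_n^z$), which is necessarily of the form $\delta_z\otimes\sigma$ with $\sigma$ a positive measure of mass $\tfrac1n$, set $\gamma^{(2)}=\gamma^\star-\gamma^{(1)}$, and define $\gamma'=\delta_{z'}\otimes\sigma+\gamma^{(2)}$. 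Its first marginal is $\mu_n^{z'}$ and its second marginal is $\nu_n$. The transport part then changes by $\int(\Vert y-z'\Vert_2^2-\Vert y-z\Vert_2^2)\,\dd\sigma(y)$, which lies in $[-D^2/n,D^2/n]$ since $\sigma$ has mass $\tfrac1n$ and is dominated by $\nu_n$, hence supported in the diameter-$D$ set. For the entropy part I would show $H(\gamma',\mu_n^{z'}\otimes\nu_n)\le H(\gamma^\star,\mu_n^z\otimes\nu_n)$: when $z'$ is not an atom of $\rho$, the block move relabels $\gamma^\star$ and the reference $\mu_n^z\otimes\nu_n$ in exactly the same way, so the relative entropy is unchanged; when $z'$ coincides with an existing atom, the moved block merges with the block already over $z'$ and both the plan masses and the reference masses add up, so the log-sum inequality (joint convexity of relative entropy) bounds the merged contribution by the sum of the separate ones. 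Putting the two parts together, $T_\lambda(\mu_n^{z'},\nu_n)\le\int\Vert y-x\Vert_2^2\,\dd\gamma'+2\lambda H(\gamma',\mu_n^{z'}\otimes\nu_n)\le T_\lambda(\mu_n^z,\nu_n)+D^2/n$, and exchanging $z$ and $z'$ gives the two-sided bound.

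I expect the entropy monotonicity to be the delicate point, essentially because of samples that coincide (so that atoms carry weights that are integer multiples of $\tfrac1n$ and the ``block over $z$'' may itself need to be split). A clean way to dispose of this is to carry the sample indices along: run the whole argument with $\tfrac1n\sum_i\delta_{(i,X_i)}$ and $\tfrac1n\sum_j\delta_{(j,Y_j)}$ on $\{1,\dots,n\}\times\RR^d$ with cost $((i,x),(j,y))\mapsto\Vert x-y\Vert_2^2$. On this space the $n$ atoms are automatically distinct, perturbing a sample is a genuine relabeling (the ``no merging'' case), and $T_\lambda$ is unchanged by this lift: one inequality is the data-processing inequality for relative entropy under the projection $(i,x)\mapsto x$, and the reverse follows by lifting an optimal unlabeled plan to the symmetric plan that spreads mass uniformly over index pairs with equal value, which preserves both the cost and the relative entropy.

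Finally, with bounded-difference constant $c_k=D^2/n$ for each of the $2n$ coordinates, McDiarmid's inequality yields, for all $t\ge0$, $\mathbf{P}[\,\vert T_\lambda(\mu_n,\nu_n)-\Esp\,T_\lambda(\mu_n,\nu_n)\vert\ge t\,]\le 2\exp(-2t^2/\sum_k c_k^2)=2\exp(-2t^2/(2D^4/n))=2\exp(-nt^2/D^4)$, as claimed. The case $\lambda=0$ is covered by the same reasoning with the entropy term removed, which recovers the concentration statement of Theorem~\ref{thm:plug-in}.
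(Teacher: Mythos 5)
Your proof is correct and follows essentially the same route as the paper: a stability estimate of $D^2/n$ for the primal problem under perturbation of a single sample, followed by McDiarmid's inequality over the $2n$ coordinates. In particular, your "clean way" of carrying sample indices along is exactly the paper's argument, which writes the primal as a minimization over $n\times n$ matrices $P$ indexed by samples (so the entropy term $\sum_{i,j}P_{i,j}\log(n^2P_{i,j})$ is unaffected by moving a sample and the optimal $P^*$ remains a feasible competitor); your explicit handling of coinciding atoms via the lifting/data-processing argument is a point the paper passes over silently.
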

\begin{proof}
As in~\cite{weed2019sharp}, we want to apply the bounded difference inequality but we study the stability of the primal problem (instead of the dual) in order to cover the regularized case painlessly. The empirical measures are of the form $\mu_n = \frac1n \sum_{i=1}^n \delta_{x_i}$ and $\nu_n = \frac1n \sum_{j=1}^n \delta_{y_j}$. Let $c\in \RR^{n\times n}$ be the cost matrix with entries $c_{i,j} = \frac12 \Vert x_i - y_j\Vert_2^2$. With those notations, it holds
$$
\frac12 T_\lambda(\mu_n,\nu_n) = \min \sum_{i,j} c_{i,j} P_{i,j} + \lambda \sum_{i,j} P_{i,j}\log(n^2P_{i,j})
$$
where the minimum is over matrices $P\in \RR^{n\times n}_+$ such that $P\mathbf{1}=\mathbf{1}/n$ and $P^\top \mathbf{1}=\mathbf{1}/n$ (i.e.~$nP$ is bistochastic). Let $P^*$ be a minimizer. Now let $\tilde \mu_n =\frac1n (\sum_{i=1}^{n-1} \delta_{x_i} + \delta_{\tilde x_n})$ for some $\tilde x_i$ in the same set of diameter $D$. This changes one row in the cost matrix, each entry in this row being changed by at most $D^2/2$. Thus using $P^*$ as a candidate in the minimization problem defining $T_\lambda(\tilde \mu_n,\nu_n)$ we get $T_\lambda(\tilde \mu_n,\nu_n) \leq T_\lambda( \mu_n,\nu_n) + \frac{D^2}{n}$. Interchanging the role of $\mu_n$ and $\tilde \mu_n$, we get the reverse inequality and thus
$$
\vert T_\lambda(\tilde \mu_n,\nu_n) - T_\lambda( \mu_n,\nu_n\vert \leq \frac{D^2}{n}.
$$
The same stability can be shown about perturbing $\nu_n$ by one sample. The proposition follows by applying the bounded difference inequality~\cite[Cor.~2.21]{wainwright2019high}, paying attention to the fact that the total number of samples is $2n$.
\end{proof}

Finally, let us give the details of the proof of Proposition~\ref{prop:plugincomputational}.
\begin{proof}[Proof of Proposition~\ref{prop:plugincomputational}]
By the concentration result we have that with probability $1-\delta$, $\vert W_2^2(\mu_n,\nu_n)-\Esp W_2^2(\mu_n,\nu_n)\vert \lesssim n^{-1/2}\sqrt{\log(2/\delta)}$. Let us break down the proof into three cases depending on the dimension $d$.

If $d< 4$, then by choosing $n \gtrsim \log(2/\delta)\epsilon^{-2}$, the quantity $W_2^2(\mu_n,\nu_n)$ has the desired accuracy with probability $1-\delta$. Also choosing $\lambda \lesssim \epsilon/(2\log n)$ guarantees that $\vert \hat T_{\lambda,n} - \hat W_2^2\vert \lesssim \epsilon$. Thus, the computational complexity is $O(n^2/(\lambda\epsilon)) = \tilde O(\epsilon^{-6})$.

If $d>4$, we can choose $n\gtrsim \log(2/\delta)^{d/4}\epsilon^{-d/2}$ to reach the desired accuracy, which leads to a computational complexity in $\tilde O(\epsilon^{-d-2})$.

Finally if $d=4$, we can choose $n$ such that $\epsilon \asymp n^{-1/2}(\log(n) +\sqrt{\log(2/\delta)})$ which leads to a computational complexity in $O(n^2\log(n)\epsilon^{-2}) = O(\epsilon^{-6}(\log n + \sqrt{\log(2/\delta)})^4) = \tilde O(\epsilon^{-6})$.
\end{proof}

\section{Analysis of the Sinkhorn divergence estimator given samples}\label{app:sinkhorndivergencesstat}
Let us first state a result on the sample complexity to estimate $S_\lambda$ with $\hat S_{\lambda,n}$ which is defined, given $x_1,\dots,x_n$ i.i.d.~samples from $\mu$ and $y_1,\dots,y_n$ i.i.d.~samples from $\nu$, as $\hat S_{\lambda,n} = S_\lambda(\hat \mu_n,\hat \nu_n)$ as in Eq.~\eqref{eq:sinkhorndivergence} where $\hat \mu_n = \frac1n \sum_{i=1}^n \delta_{x_i}$ and $\hat \nu_n = \frac1n \sum_{i=1}^n \delta_{y_i}$.
Since the following result has not yet been stated in the precise form that we use, we give a short proof below. It essentially just requires to combine the results from~\cite{mena2019statistical} and~\cite{genevay2019sample}.
\begin{lemma}\label{lem:samplecomplexityentropy}
Let $\mu,\nu\in \Pp(\RR^d)$ be concentrated on a set of diameter $1$, let $\hat \mu_n,\hat \nu_n$ be empirical distributions with $n$ independent samples and let $d'=2\lfloor d/2\rfloor$. Then
$$
\Esp\Big[ \vert \hat S_{\lambda,n} -S_\lambda(\mu,\nu) \vert \Big] \lesssim (1+\lambda^{-d'/2}) n^{-1/2}
$$
where $\lesssim$ hides a constant that only depends on $d$.
\end{lemma}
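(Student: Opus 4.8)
The plan is to decompose the Sinkhorn divergence into its three constituent entropic transport costs and control each one separately, then combine. Write
$$
\hat S_{\lambda,n} - S_\lambda(\mu,\nu) = \big(T_\lambda(\hat\mu_n,\hat\nu_n)-T_\lambda(\mu,\nu)\big) - \tfrac12\big(T_\lambda(\hat\mu_n,\hat\mu_n)-T_\lambda(\mu,\mu)\big) - \tfrac12\big(T_\lambda(\hat\nu_n,\hat\nu_n)-T_\lambda(\nu,\nu)\big),
$$
so that by the triangle inequality it suffices to bound $\Esp[\,|T_\lambda(\hat\mu_n,\hat\nu_n)-T_\lambda(\mu,\nu)|\,]$ and the two ``diagonal'' terms $\Esp[\,|T_\lambda(\hat\mu_n,\hat\mu_n)-T_\lambda(\mu,\mu)|\,]$, each by $O((1+\lambda^{-d'/2})n^{-1/2})$. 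For the off-diagonal term, the reference~\cite{mena2019statistical} provides exactly a sample-complexity bound of the form $\Esp[\,|T_\lambda(\hat\mu_n,\hat\nu_n)-T_\lambda(\mu,\nu)|\,]\lesssim (1+\lambda^{-\lceil d/2\rceil})n^{-1/2}$ (or the stated $\lambda^{-d'/2}$ after matching conventions for even/odd $d$), obtained by bounding the Rademacher complexity of the class of smooth dual potentials whose derivatives up to order roughly $d/2$ are controlled with constants scaling in powers of $\lambda^{-1}$. I would invoke this result directly, being careful to translate their normalization of the entropic penalty (and their cost $\|x-y\|^2$ vs.\ our $\|x-y\|_2^2$ with the factor $2\lambda$) into ours; this accounts for the factor $d' = 2\lfloor d/2\rfloor$.

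For the diagonal terms, the same bound of~\cite{mena2019statistical} applies verbatim with $\nu$ replaced by $\mu$, since there is nothing in their argument requiring the two marginals to differ; alternatively, and this is where~\cite{genevay2019sample} enters, one can note that $T_\lambda(\hat\mu_n,\hat\mu_n)$ is a U-statistic-like functional of the samples and that its fluctuations are controlled by the same dual-potential Rademacher complexity. Either way, each diagonal term is $O((1+\lambda^{-d'/2})n^{-1/2})$, and summing the three contributions with the triangle inequality yields the claim. The constant depends only on $d$ (through the order of differentiation and the dimension-dependent covering-number constants) because the diameter is normalized to $1$.

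The main obstacle is bookkeeping rather than a genuine mathematical difficulty: the cited bounds in~\cite{mena2019statistical} and~\cite{genevay2019sample} are stated under slightly different conventions (placement of the factor $2$ in front of $\lambda H$, the heat-kernel versus product-reference normalization discussed in Appendix~\ref{app:computational}, and the precise exponent $\lceil d/2\rceil$ versus $\lfloor d/2\rfloor$), so the work is to check that none of these rescalings changes the exponent of $\lambda$ beyond the stated $d'/2$ and that the compactness of the supports (diameter $1$) is all that is needed for the dual potentials to lie in the relevant smoothness class. A secondary point to verify is that the expectation bound, not merely a high-probability bound, comes out of their argument; this is immediate since the symmetrization and chaining steps are carried out in expectation. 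Once these conventions are reconciled, the proof is a one-line application of the triangle inequality.
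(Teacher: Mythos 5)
Your proposal matches the paper's proof essentially verbatim: the paper also decomposes $\hat S_{\lambda,n}-S_\lambda(\mu,\nu)$ into the three $T_\lambda$ differences, reduces each to a supremum of an empirical process over the class of dual potentials via \cite{mena2019statistical}, identifies that class as a Sobolev ball $H^s$ with $s=d'/2+1$ of radius $O(1+\lambda^{1-s})$ via \cite{genevay2019sample}, and concludes with the Rademacher complexity of RKHS balls. The only detail worth fixing is the exponent: with $s=\lfloor d/2\rfloor+1$ the radius scales as $\lambda^{-\lfloor d/2\rfloor}=\lambda^{-d'/2}$ directly, so no $\lceil d/2\rceil$ variant arises.
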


\begin{proof}
It has been shown in~\cite[Cor.~2]{mena2019statistical}, with a strategy similar to that employed in the end of the proof of Lemma~\ref{lem:plugin_decomposition}, that
$$
\Big\vert\frac12 T_\lambda(\hat \mu_n, \hat\nu_n) -\frac12 T_\lambda(\mu,\nu)\Big\vert \leq \sup_{f \in \mathcal{F}} \Big\vert \int f \dd(\hat \mu_n-\mu)\Big\vert + \sup_{f \in \mathcal{F}} \Big\vert \int f \dd(\hat \nu_n-\nu)\Big\vert
$$
where $\mathcal{F}$ is any class of functions that is large enough to contain all the solutions to Eq.~\eqref{eq:dualentropic} for all pairs of measures $\mu,\nu \in \Pp(\RR^d)$ concentrated on a set of diameter $1$. It was shown in~\cite[Thm.~2]{genevay2019sample} that $\mathcal{F}$ can be chosen as a ball in the Sobolev space $H^{s}$, $s\geq 1$ with diameter $C(1+\lambda^{1-s})$ for some $C>0$ that only depends on $d$ and $s$. In particular, for $s= d'/2+1$, $H^{s}$ is a reproducible kernel Hilbert space. Thus, using the notion of Rademacher complexity introduced in the proof of Lemma~\ref{lemma:chaining_bound} and its bound for balls in reproducible kernel Hilbert spaces (as in~\cite[Prop.~2]{genevay2019sample}), it follows
$$
\Esp\Big[ \sup_{f\in \mathcal{F}} \Big\vert \int f\dd(\hat \mu_n-\mu)\Big\vert \Big] \leq 2\mathcal{R}_n(\mathcal{F},\mu) \lesssim (1+\lambda^{-d'/2})n^{-1/2}.
$$
This is sufficient to bound the expected estimation error of $T_\lambda$. Let us now turn our attention to $\hat S_{\lambda,n}$. It holds
\begin{multline*}
\vert \hat S_{\lambda,n} - S_\lambda(\mu,\nu)\vert \leq \vert T_\lambda(\hat \mu_n,\hat \nu_n) -T_\lambda(\mu,\nu)\vert \\
+\frac12 \vert T_\lambda(\hat \mu_{n},\hat \mu_{n}) - T_\lambda(\mu,\mu)\vert +\frac12 \vert T_\lambda(\hat \nu_{n},\hat \nu_{n})-T_\lambda(\nu,\nu)\vert. 
\end{multline*}
The argument in~\cite{mena2019statistical} goes through for each term and it follows that $\hat S_{\lambda,n}$ admits the same statistical bound (up to a constant) than $\hat T_{\lambda,n}$.
\end{proof}

\begin{proof}[Proof of Proposition~\ref{prop:samplesinkhorn}]
Let $W_2^2 = W_2^2(\mu,\nu)$ and $S_\lambda=S_\lambda(\mu,\nu)$. We consider the following error decomposition:
$$
\Esp \big[ \vert \hat S_{\lambda,n} -  W_2^2\vert\big]\leq \Esp \big[ \vert \hat S_{\lambda,n} -  S_\lambda \vert\big] + \vert S_\lambda - W_2^2\vert \lesssim (1+\lambda^{-d'/2}) n^{-1/2} + \lambda^2
$$
where the first bound is from Lemma~\ref{lem:samplecomplexityentropy} and the second bound is an assumption. We then optimize the bound in $\lambda$ which gives $\lambda \asymp n^{-1/(d'+4)}$ and an error in $n^{-2/(d'+4)}$. For the concentration bound, we use the argument in the proof of Proposition~\ref{prop:concentration_unified} in Appendix~\ref{app:plugin}. Observe that if only one of the samples drawn from $\mu$ is changed, the resulting change in $\hat S_{\lambda,n}$ is at most $2/n$ which leads to, by the bounded difference inequality,
$$
\mathbf{P} \Big[ \big\vert S_\lambda(\mu_n,\nu_n) - \Esp [S_\lambda(\mu_n,\nu_n)]\big\vert \geq t \Big] \leq 2 \exp(-nt^2/4).
$$
\end{proof}

\begin{proof}[Proof of Proposition~\ref{prop:timecomplexitySinkhorn}]
For $\hat S^{(k)}_{\lambda,n}$ we consider the error decomposition
$$
\vert \hat S^{(k)}_{\lambda,n} - W_2^2\vert \leq \vert  \hat S^{(k)}_{\lambda,n} -  \hat S_{\lambda,n}\vert +
\vert \hat S_{\lambda,n} - \Esp [\hat S_{\lambda,n}] \vert + 
\Esp \vert \hat S_{\lambda,n} - S_{\lambda}\vert +
\vert S_{\lambda} - W_2^2\vert.
$$
Let us choose $\lambda \asymp n^{-1/(d'+4)}$ as in Proposition~\ref{prop:samplesinkhorn}. By the concentration result of Proposition~\ref{prop:samplesinkhorn}, we have that with probability $1-\delta$, $\vert \hat S_{\lambda,n}-\Esp \hat S_{\lambda,n}\vert \lesssim n^{-1/2}\sqrt{\log(2/\delta)}$ and thus $\vert \hat S_{\lambda,n} - W_2^2\vert \lesssim n^{-2/(d'+4)} + n^{-1/2}\sqrt{\log(2/\delta)}$.
Thus by choosing $n \gtrsim \log(2/\delta)\epsilon^{-(d'+4)/2}$ the quantity $\hat S_{\lambda,n}$ has the desired accuracy with probability $1-\delta$.  It follows that the computational complexity is $O(n^2/(\lambda\epsilon)) = \tilde O(\epsilon^{-d'-5.5})$.

For the second claim, we just remark that $n^{-2/(d'+4)}$ dominates the rate of the plug-in estimator given in Theorem~\ref{thm:plug-in} for all $d$, so both estimators can achieve an error of this order. However the difference is that with $\hat S_{\lambda,n}$ a regularization level $\lambda \asymp \epsilon^{-1/2}$ is sufficient while $\lambda \lesssim \epsilon/\log(n)$ is required for $\hat T_{\lambda,n}$ to achieve this error $\epsilon$. The time complexity bounds then follows by Proposition~\ref{prop:sinkhornscomputational}.
\end{proof}

\section{Analysis of deterministic discretization}\label{app:grid}
In this section, we consider probability distributions on the torus $\mu,\nu \in \Pp(\TT^d)$ with densities with respect to the Lebesgue measure (also denoted $\mu,\nu$) and $c(x,y)=\frac12\Vert [y-x]\Vert_2^2$ which is half the squared distance on the torus. We denote $[x] = x+k_0$ where $k_0\in \mathbb{Z}^d$ is such that $\Vert x+k\Vert_2$ is minimal ($k_0$ is unique Lebesgue almost everywhere). We denote by $(u_\lambda,v_\lambda)$ the couple of minimizers of Eq.~\eqref{eq:dualentropyregularized} that are fixed points of Sinkhorn's iterations
\begin{align}\label{eq:fixedpointsinkhorn}
    u_\lambda(x) = -\lambda \log \int e^{(v_\lambda(y)-c(x,y))/\lambda}\dd\nu(y), && v_\lambda(y)=  - \lambda \log \int e^{(u_\lambda(x)-c(x,y))/\lambda}\dd\mu(x)
\end{align}
and such that $u_\lambda(0)=0$. These properties uniquely define $(u_\lambda,v_\lambda$) and we consider $p_\lambda(x,y) = \exp\big((u_\lambda(x)+v_\lambda(y)-c(x,y))/\lambda\big)\mu(x)\nu(y)$ which is the unique solution to~\eqref{eq:entropycost}. The following lemma gives some regularity estimates on $p_\lambda$. What is required in its proof is regularity of the marginals and of the cost function (which we fix to be the half squared-norm cost for consistency).

\begin{lemma}[Regularity of $p_\lambda$]\label{lem:regularityplan}
Assume that $\mu,\nu\in \Pp(\TT^d)$ admit $M$-Lipschitz continuous log-densities. Then for almost every $z \in (\TT^d)^2$ it holds
$$
 \Vert \nabla \log  p_\lambda(z)\Vert_2 \leq 4\sqrt{d}\lambda^{-1} + 2M.
$$
Moreover, it holds for all $z,z'\in (\TT^d)^2$
$$
\vert p_\lambda(z) -p_\lambda(z')\vert \leq (e^{(4\sqrt{d}\lambda^{-1}+M)\Vert [z-z']\Vert_2} - 1)p_\lambda(z). 
$$
\end{lemma}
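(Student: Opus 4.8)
The plan is to estimate the gradient of $\log p_\lambda$ directly from the explicit formula $\log p_\lambda(x,y) = \lambda^{-1}\big(u_\lambda(x)+v_\lambda(y)-c(x,y)\big) + \log\mu(x)+\log\nu(y)$, so that
$$
\nabla_x \log p_\lambda(x,y) = \lambda^{-1}\big(\nabla u_\lambda(x) - \nabla_x c(x,y)\big) + \nabla\log\mu(x),
$$
and similarly in $y$. The terms $\nabla\log\mu$ and $\nabla\log\nu$ are bounded by $M$ by hypothesis, and $\nabla_x c(x,y) = [x-y]$ has norm at most $\sqrt d/2$ (the diameter of the torus in each coordinate being $1$). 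So the whole problem reduces to bounding $\Vert\nabla u_\lambda\Vert$ and $\Vert\nabla v_\lambda\Vert$ by something of order $1$ (times $\sqrt d$), after which multiplying by $\lambda^{-1}$ gives the stated $4\sqrt d\,\lambda^{-1}+2M$ (with room to spare in the constants).

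To bound $\nabla u_\lambda$, I would differentiate the fixed-point equation \eqref{eq:fixedpointsinkhorn}. Writing $u_\lambda(x) = -\lambda\log\int e^{(v_\lambda(y)-c(x,y))/\lambda}\dd\nu(y)$, differentiation under the integral sign gives
$$
\nabla u_\lambda(x) = \frac{\int \nabla_x c(x,y)\, e^{(v_\lambda(y)-c(x,y))/\lambda}\dd\nu(y)}{\int e^{(v_\lambda(y)-c(x,y))/\lambda}\dd\nu(y)},
$$
which is a weighted average of the vectors $\nabla_x c(x,y)=[x-y]$ over $y$. Hence $\Vert\nabla u_\lambda(x)\Vert_2 \le \sup_y\Vert[x-y]\Vert_2 \le \sqrt d/2$, and likewise $\Vert\nabla v_\lambda\Vert_2\le\sqrt d/2$. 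Plugging back, $\Vert\nabla_x\log p_\lambda\Vert_2 \le \lambda^{-1}(\sqrt d/2 + \sqrt d/2) + M = \sqrt d\,\lambda^{-1}+M$ in the $x$-variable and the same in $y$; combining the two blocks of coordinates gives $\Vert\nabla\log p_\lambda(z)\Vert_2 \le \sqrt{2}(\sqrt d\,\lambda^{-1}+M) \le 4\sqrt d\,\lambda^{-1}+2M$. The differentiation-under-the-integral step needs the mild justification that $c$ is Lipschitz and the integrand is dominated locally, which holds since everything lives on the compact torus; the almost-everywhere qualifier absorbs the non-smoothness of $[\,\cdot\,]$ on the cut locus.

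For the second inequality I would integrate the gradient bound along a path. Since $\Vert\nabla\log p_\lambda\Vert_2 \le 4\sqrt d\,\lambda^{-1}+M$ (using the slightly sharper constant coming from the $\log$-density term, or just bounding crudely), for any $z,z'$ joined by a geodesic of length $\Vert[z-z']\Vert_2$ we get $\vert\log p_\lambda(z)-\log p_\lambda(z')\vert \le (4\sqrt d\,\lambda^{-1}+M)\Vert[z-z']\Vert_2$; exponentiating and using $e^{t}-1\ge$ controls the difference, we obtain
$$
\vert p_\lambda(z)-p_\lambda(z')\vert = p_\lambda(z)\,\big\vert 1 - e^{\log p_\lambda(z')-\log p_\lambda(z)}\big\vert \le \big(e^{(4\sqrt d\,\lambda^{-1}+M)\Vert[z-z']\Vert_2}-1\big)p_\lambda(z),
$$
where the last step uses $|1-e^{-s}|\le e^{|s|}-1$. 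The only mild subtlety here is that $p_\lambda$ may not be globally smooth across the cut locus of the torus distance, so the path integral should be taken along a geodesic avoiding it (possible for a.e.\ pair, then extended by continuity of $p_\lambda$), or one argues with the a.e.\ gradient bound plus absolute continuity of $\log p_\lambda$ along lines.

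The main obstacle is the bookkeeping around the non-differentiability of $[\,\cdot\,]$ and hence of $c$ and of $p_\lambda$ on the cut locus: one must be careful that "differentiate the fixed-point equation'' and "integrate the gradient'' are legitimate despite this, which is handled by the a.e.\ statements and a density/continuity argument. Everything else is a routine averaging-and-triangle-inequality computation.
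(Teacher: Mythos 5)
Your proposal is correct and follows essentially the same route as the paper's proof: differentiate $\log p_\lambda$ via the dual potentials, bound $\nabla u_\lambda$ and $\nabla v_\lambda$ by differentiating the Sinkhorn fixed-point equation (recognizing the result as a weighted average of the vectors $[x-y]$), and then obtain the multiplicative stability bound by integrating the gradient estimate and exponentiating (the paper phrases this as Gr\"onwall's inequality). Your constant bookkeeping is in fact slightly sharper than the paper's (which uses $\sup_y\Vert[y-x]\Vert_2\le\sqrt d$ rather than $\sqrt d/2$), and your attention to the cut-locus issue matches the almost-everywhere qualifiers in the statement.
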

\begin{proof}
By differentiating the definition of $p_\lambda$, we have for almost every $(x,y)\in (\TT^d)^2$
$$
\nabla_x \log p_\lambda(x,y) = \frac{1}{\lambda}(\nabla u_\lambda(x) -[x-y]) + \nabla m(x).
$$
where $m$ is the log-density of $\mu$. By differentiating Eq.~\eqref{eq:fixedpointsinkhorn}, we also have
$$
\nabla u_\lambda(x) = \int [x-y]e^{(u_\lambda(x)+v_\lambda(y)-c(x,y))/\lambda}\dd\nu(y)
$$
and thus $\Vert \nabla u_\lambda(x)\Vert_2 \leq \sup_{y\in \TT^d} \Vert [y-x]\Vert_2 =\sqrt{d}$.
It follows that $
\sup_{x,y \in \TT^d} \Vert \nabla_x\log  p_\lambda(x,y)\Vert_2 \leq \frac{2\sqrt{d}}{\lambda} + M$, from which we deduce the first bound by also taking into account the $\nabla_y$ component. Now let $\alpha =4\sqrt{d}\lambda^{-1}+2M$. By Gr\"onwall's inequality, we have
$e^{-\alpha \Vert[ z-z']\Vert_2}p_\lambda(z) \leq p_\lambda(z')\leq e^{\alpha \Vert [z-z']\Vert_2}p_\lambda(z)$ for all $z,z'\in (\TT^d)^2$. It follows that $\vert p_\lambda (z')-p_\lambda(z)\vert \leq \max\{ e^{\alpha \Vert [z'-z]\Vert_2}-1, 1-e^{-\alpha \Vert [z'-z]\Vert_2}\}p_\lambda(z)$ which implies our claim.
\end{proof}

For a measure $\mu \in \Pp(\TT^d)$ we call $\mu_h$ its finite volume discretization at resolution $h=1/m$ for $m\in \mathcal{N}$ on the grid $(\ZZ/m\ZZ)^d$. It is built via the following process: let $q_h:\TT^d\to \TT^d$ be defined by $q_h(x_1,\dots,x_d) = (\frac1m \lfloor mx_1+1/2\rfloor,\dots,\frac1m \lfloor mx_d +1/2\rfloor)$. It maps each point $x\in \TT^d$ to its closest point on the grid $(\mathbb{Z}/m\mathbb{Z})^d$ (with some arbitrary rule for ties). Then let $\mu_h \eqdef(q_h)_\#\mu$ which gives to each point in the grid the mass that $\mu$ gives to its surrounding cell. Also, let us label the points in $(\mathbb{Z}/m\mathbb{Z})^d$ from $1$ to $n=m^d$ as $(x_i)_{i=1}^n$ (we also use the notation $y_i=x_i$) and let us call $Q_j\subset \TT^d$ the set of points which are mapped to the point labeled by $j \in \{1,\dots,n\}$. We also call $Q_{i,j}=Q_i\times Q_j \subset (\TT^d)^2$.
We now state and prove a result that is slightly more precise than Proposition~\ref{prop:grid}  were we control the error made by replacing measures by their discretization in the estimation of $T_\lambda$.
\begin{proposition}[Stability under discretization]
Assume that $\mu,\nu \in \Pp(\TT^d)$ admit $M$-Lipschitz continuous log-densities and let $C>0$ be any constant. If  $h(M+\lambda^{-1})\leq C$ then
\begin{align*}
-h^2(1+M) &\lesssim T_\lambda(\mu_h,\nu_h)-T_\lambda(\mu,\nu) \lesssim \min\{h, h^2(\lambda^{-1} +M+1)\}
\end{align*}
where $\lesssim$ hides constants that only depend on $d$ and $C$.
\end{proposition}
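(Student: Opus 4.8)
The plan is to prove the two inequalities separately by building explicit competitor couplings in the primal problem~\eqref{eq:entropycost}, exploiting each time that the entropy term is only easier to control than the quadratic one.

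For the upper bound, push the optimal plan $p_\lambda$ of $T_\lambda(\mu,\nu)$ forward by $(q_h,q_h)$: the discrete plan $\gamma_h^+\eqdef (q_h,q_h)_\# p_\lambda$ has marginals $\mu_h,\nu_h$, hence is admissible for $T_\lambda(\mu_h,\nu_h)$. Since $(q_h,q_h)_\#(\mu\otimes\nu)=\mu_h\otimes\nu_h$, the data-processing inequality for relative entropy gives $H(\gamma_h^+,\mu_h\otimes\nu_h)\le H(p_\lambda,\mu\otimes\nu)$, so that
$$
\tfrac12 T_\lambda(\mu_h,\nu_h)-\tfrac12 T_\lambda(\mu,\nu)\ \le\ \sum_{i,j}\int_{Q_{i,j}}\big(c(x_i,x_j)-c(x,y)\big)\,\dd p_\lambda(x,y).
$$
On every cell $Q_{i,j}$ on which $c$ is smooth, it is in fact a quadratic polynomial, so its Taylor expansion around the cell centre $(x_i,x_j)$ is exact up to second order; the second-order remainder contributes $O(h^2)\,p_\lambda(Q_{i,j})$, while the first-order term, having zero integral against the Lebesgue-symmetric cell, contributes only its integral against the oscillation $p_\lambda(\cdot)-p_\lambda(x_i,x_j)$, which by Lemma~\ref{lem:regularityplan} and the hypothesis $h(M+\lambda^{-1})\le C$ is $O\big((\lambda^{-1}+M)h\big)\,p_\lambda(Q_{i,j})$. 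Summing over cells, this part is $\lesssim(\lambda^{-1}+M+1)h^2$; together with the crude $\sqrt d$-Lipschitz bound $|c(x_i,x_j)-c(x,y)|\lesssim h$ this yields the claimed $\min\{h,\ h^2(\lambda^{-1}+M+1)\}$.

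For the lower bound, let $(m_{i,j})$ be the optimal plan $p_\lambda^h$ of $T_\lambda(\mu_h,\nu_h)$ and redistribute its mass inside each cell according to the true marginals,
$$
\gamma_h^-\eqdef\sum_{i,j}\frac{m_{i,j}}{\mu(Q_i)\nu(Q_j)}\,(\mu|_{Q_i})\otimes(\nu|_{Q_j}).
$$
Because $\mu_h(\{x_i\})=\mu(Q_i)$ and $\nu_h(\{x_j\})=\nu(Q_j)$, the marginal constraints on $p_\lambda^h$ make $\gamma_h^-$ a valid coupling of $\mu$ and $\nu$; moreover its density with respect to $\mu\otimes\nu$ is the piecewise constant $m_{i,j}/(\mu(Q_i)\nu(Q_j))$, so that $H(\gamma_h^-,\mu\otimes\nu)=\sum_{i,j}m_{i,j}\log\frac{m_{i,j}}{\mu(Q_i)\nu(Q_j)}=H(p_\lambda^h,\mu_h\otimes\nu_h)$ \emph{exactly}. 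Hence
$$
\tfrac12 T_\lambda(\mu,\nu)-\tfrac12 T_\lambda(\mu_h,\nu_h)\ \le\ \sum_{i,j}\frac{m_{i,j}}{\mu(Q_i)\nu(Q_j)}\int_{Q_{i,j}}\big(c(x,y)-c(x_i,x_j)\big)\,\dd\mu\,\dd\nu,
$$
and the same second-order-with-symmetry computation, now using only that $\mu$ and $\nu$ vary by a factor $e^{O(Mh)}$ over each cell (their log-densities being $M$-Lipschitz), bounds the first-order term by $O(Mh)\,\mu(Q_i)\nu(Q_j)$ and gives $\lesssim(M+1)h^2$ after summation. The gain here --- no $\lambda^{-1}$ --- comes from the fact that only the regularity of the \emph{fixed} marginals is used, not that of the regularized plan.

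\textbf{The main obstacle} is that the torus cost $c(x,y)=\tfrac12\|[y-x]\|_2^2=\tfrac12\sum_k\phi\big((y-x)_k\big)$ is not smooth on the cells straddling the cut locus, $\phi$ being $1$-periodic with a downward corner at each half-integer, where only the $O(h)$ Lipschitz bound survives. For the lower bound this is harmless: on such a cell $\phi((y-x)_k)-\phi((x_j-x_i)_k)$ equals a concave function of the centred displacement plus an $O(h^2)$ term, so by Jensen its integral against the (near-uniform) reference measure on the cell is bounded above by its value at the mean, and these cells contribute at most $O\big((1+M)h^2\mu(Q_i)\nu(Q_j)\big)$ each. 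For the upper bound one must additionally control the $p_\lambda$-mass of the $O(h)$-neighbourhood of the cut locus: by Lemma~\ref{lem:regularityplan} the conditional density of each gap coordinate $[y-x]_k$ under $p_\lambda$ is log-Lipschitz with constant $O(\lambda^{-1}+M)$, and a probability density on the circle whose logarithm is $G$-Lipschitz charges any arc of length $h$ with mass $\lesssim h(1+G)$, so this neighbourhood carries $p_\lambda$-mass $\lesssim h(\lambda^{-1}+M+1)$ and the corresponding cells contribute $\lesssim h^2(\lambda^{-1}+M+1)$, consistent with the stated rate. Carrying out these cut-locus estimates cleanly, together with the bookkeeping of the $e^{O(Mh)}$ and $e^{O((\lambda^{-1}+M)h)}$ factors (all bounded under $h(M+\lambda^{-1})\le C$ by constants depending only on $d$ and $C$), is the bulk of the remaining work.
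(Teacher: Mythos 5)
Your proposal is correct and follows essentially the same route as the paper's proof: the same two competitor constructions (pushforward of the optimal continuous plan by $q_h\otimes q_h$ with the data-processing inequality for the upper bound; block-constant redistribution of the optimal discrete plan with exact entropy equality for the lower bound), the same exact second-order Taylor expansion of the quadratic cost with the symmetry of the cells killing the leading first-order term, and the same use of Lemma~\ref{lem:regularityplan} (resp.\ the Lipschitz log-densities) to control the remaining oscillation. The one difference is that you explicitly isolate and sketch a treatment of the cells straddling the cut locus, a point the paper's proof dismisses with ``the argument being the same in each cell''; your sketch there (Jensen for the concave kink on the lower-bound side, an $O(h(\lambda^{-1}+M+1))$ mass bound near the cut locus on the upper-bound side) is sound and arguably more careful than the original.
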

\begin{proof}
The principle of the proof is to build admissible transport plans for the continuous (resp.~discrete) problem from an admissible transport plan for the discrete (resp.~continuous) problem and to bound the associated primal objectives functions.

\textbf{From discrete to continuous plans.}
Consider any $\gamma_h \in \Pi(\mu_h,\nu_h)$ and consider $\gamma \in \Pi(\mu,\nu)$ the (unique) measure with a constant density with respect to $\mu\otimes \nu$ on each cell $Q_{i,j}$ and such that $(q_h\otimes q_h)_\# \gamma =\gamma_h$ (see~\cite[Def.~1]{genevay2019sample} for a detailed construction in $\RR^d$). By construction, it holds $H(\gamma, \mu \otimes \nu) = H(\gamma_h, \mu_h\otimes \nu_h)$. Let us bound the difference $\Delta_{i,j} = \int_{Q_{i,j}} (\frac12 \Vert [y-x]\Vert_2^2-\frac12 \Vert [x_i-y_j]\Vert_2^2)\dd\gamma(x,y)$. For clarity, let us assume that $[x-y]=x-y$ for all $(x,y) \in Q_{i,j}$, the argument being the same in each cell. We start with a second order Taylor expansion of the cost (which is exact with our quadratic cost):
\begin{multline*}
    \frac12 \Vert y-x\Vert_2^2 - \frac12 \Vert x_i-y_j\Vert_2^2 = (x_i-y_j)^\top (x-x_i) + (y_j-x_i)^\top (y-y_i) \\
    +\frac12 \Vert x-x_i\Vert_2^2 + \frac12 \Vert y-y_i\Vert_2^2 -(x-x_i)^\top (y-y_i).
\end{multline*}
Integrating the terms in the second row over $Q_{i,j}$, we get a quantity bounded by $dh^2/2$. For the terms in the first row, we see that we have to bound integrals of the form 
$\vert \sum_j \int_{Q_{i,j}} (x_i-y_j)^\top (x-x_i) \dd\gamma(x,y)\vert \leq \sqrt{d} \vert \int_{Q_i} (x-x_i)\mu(x)\dd x\vert $. So let us consider specifically the following integral:
\begin{align*}
\Delta_{i} &=\Big\vert \int_{Q_{i}} (x-x_i)\mu(x)\dd x\Big\vert \\
&= \Big\vert \int_{Q_{i}} (x-x_i)(\mu(x)-\vert Q_{i}\vert^{-1} \int_{Q_i} \mu(x')\dd x')\dd x\Big\vert\\
&\leq \sqrt{d}h \vert Q_{i}\vert^{-1} \int_{Q_i^2} \vert \mu(x)-\mu(x')\vert\dd x \dd x'
\end{align*}
where we used the fact that $x_i$ is the center of mass of $Q_i$ for the Lebesgue measure and we denoted $\vert Q_i\vert$ the Lebesgue measure of $Q_{i}$. Now, since $\log \mu$ is $M$-Lipschitz an application of Gr\"onwall's inequality as in the proof of Lemma~\ref{lem:regularityplan} shows that $\vert \mu(x)-\mu(x')\vert \leq (e^{M\Vert x-x'\Vert_2}-1)\mu(x)$. It thus follows that
$$
\Delta_i \leq \sqrt{d} h (e^{Mh\sqrt{d}}-1)\mu(Q_i) \lesssim M h^2 \mu(Q_i).
$$
Putting all the bounds together and summing over all cells $Q_{i,j}$ we get
$$
\int_{(\TT^d)^2}\Big(\frac12 \Vert [y-x]\Vert_2^2-\frac12 \Vert [x_i-y_j]\Vert_2^2\Big)\dd\gamma(x,y) \lesssim h^2(1+M).
$$
From this it follows that for $\lambda\geq 0$, we have $T_\lambda(\mu,\nu)-T_\lambda(\mu_h,\nu_h)\lesssim h^2(1+M)$.

\textbf{From continuous to discrete plans.} Consider any $\gamma \in \Pi(\mu,\nu)$ and consider its discretization $\gamma_h = (q_h\otimes q_h)_\#\gamma$. By the ``information processing inequality'', it holds $H(\gamma_h,\mu_h\otimes \nu_h)\leq H(\gamma,\mu\otimes \nu)$. Also, since the cost function is $\sqrt{d}$-Lipschitz on $\TT^d$, we have the naive discretization bound
$$
\Big\vert \int_{(\TT^d)^2} \frac12 \Vert x-y\Vert^2_2 \dd(\gamma-\gamma_h)(x,y)\Big\vert \lesssim h .
$$
This is sufficient to deduce that $T_\lambda(\mu_h,\nu_h)-T_\lambda(\mu,\nu)\lesssim h$ for all $\lambda\geq 0$. Let us see however that a finer discretization bound can be given when $\gamma$ is the optimal solution of the entropy regularized problem using the regularity shown in Lemma~\ref{lem:regularityplan}. We denote $z=(x,y)\in (\TT^d)^2$ and $z_{i,j}=(x_i,y_i)$ and we have, by decomposing the error into a first and second order term as in the first part of the proof,
\begin{align*}
\Big\vert \int_{(\TT^d)^2} \frac12 \Vert y-x\Vert_2^2\dd(\gamma(x,y)-\gamma_h(x,y)) \Big\vert
&=\Big\vert \sum_{i,j} \int_{Q_{i,j}} (\frac12 \Vert y-x\Vert_2^2-\frac12 \Vert x_i-y_j\Vert_2^2)\dd\gamma(x,y)\Big\vert\\
&\lesssim \sum_{i,j} \Big\vert \int_{Q_{i,j}} (z-z_{i,j}) p_\lambda(z)\dd z \Big\vert  + h^2.
\end{align*}
It remains to estimate the integral terms as can be done as in the first part of the proof by using the regularity of $\log p_\lambda$ given by Lemma~\ref{lem:regularityplan}
\begin{align*}
\Big\vert \int_{Q_{i,j}} (z-z_{i,j}) p_\lambda(z)\dd z \Big\vert 
&\lesssim h \vert Q_{i,j}\vert^{-1} \int_{Q_{i,j}} \int_{Q_{i,j}} \vert p_\lambda(z)-p_\lambda(z')\vert\dd z\dd z'\\
&\leq h  (e^{(4\sqrt{d}\lambda^{-1}+M)\sqrt{d}h} - 1)p_\lambda(Q_{i,j})\\
&\lesssim h^2(\lambda^{-1}+M)p_\lambda(Q_{i,j}) .
\end{align*}
The conclusion follows by summing over all cells $Q_{i,j}$.
\end{proof}

We now proceed to the proof of Proposition~\ref{prop:gridperf}. This proof would be immediate if we were working on $\RR^d$ by combining the stability of Proposition~\ref{prop:grid} with the approximation error of Theorem~\ref{th:bias}. However, our framework in this section is that of the torus, and has to be so because there is no compactly supported measures with continuous log-densities on $\RR^d$. In the setting of the torus, the equivalence from Eq.~\eqref{eq:EntropicDynamic} holds for a slightly different cost function built from the heat kernel on the torus, as proved in~\cite{gigli2018benamoubrenier} for general manifolds. This cost function is
$$
\tilde c_\lambda(x,y) = - \lambda \log \Big(\sum_{k\in \mathbb{Z}^d}\exp\Big( -\frac{1}{2\lambda}\Vert x-y-k\Vert_2^2\Big)\Big).
$$
Let $\tilde T_{\lambda}(\mu,\nu)$ be the entropy regularized optimal transport cost as defined in Eq.~\eqref{eq:entropycost} where the cost function $c(x,y)=\frac12 \Vert [x-y]\Vert_2^2$ is replaced by $\tilde c_\lambda$, and let $\tilde S_\lambda$ be the corresponding Sinkhorn divergence, as defined in Eq.~\eqref{eq:sinkhorndivergence}. A direct extension of Theorem~\ref{th:bias} then gives that if $\mu,\nu\in \Pp(\RR^d)$ have bounded densities and supports then
\begin{equation}\label{eq:biastorus}
\vert \tilde S_\lambda(\mu,\nu) - W_2^2(\mu,\nu)\vert \leq \frac{\lambda^2}{4}\max\{ 2I_0(\mu,\nu),I_0(\mu,\mu)+I_0(\nu,\nu) \}.
\end{equation}
In the next lemma, we control the error that is made when replacing $\tilde S_\lambda$ by $S_\lambda$, which is asymptotically exponentially small.

\begin{lemma}\label{lem:fromheattotorus}
Assume that $\mu,\nu \in \Pp(\TT^d)$ admit log-densities which are Lipschitz continuous. Then there exists $c_1,c_1',c_2>0$ such that
$$
0\leq T_\lambda(\mu,\nu) - \tilde T_\lambda(\mu,\nu)\leq c_1 e^{-c_2/\lambda}.
$$
In particular, we have $\vert \tilde S_\lambda(\mu,\nu) - S_\lambda(\mu,\nu)\vert \leq c_1'e^{-c_2/\lambda}$.
\end{lemma}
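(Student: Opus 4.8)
The plan is to compare the two cost functions pointwise, transfer the comparison to the optimal values, and then to the debiased quantities. Write $c(x,y)=\frac12\Vert[y-x]\Vert_2^2=\frac12\min_{k\in\ZZ^d}\Vert y-x-k\Vert_2^2$ for the torus cost and let $k_0$ realise the minimum. The term $k=k_0$ in the sum defining $\tilde c_\lambda$ equals $e^{-c(x,y)/\lambda}$, so $e^{-\tilde c_\lambda(x,y)/\lambda}\ge e^{-c(x,y)/\lambda}$ and hence $\tilde c_\lambda\le c$ everywhere. Factoring out this dominant term gives the identity
\[
c(x,y)-\tilde c_\lambda(x,y)=\lambda\log\big(1+g_\lambda(x,y)\big),\qquad g_\lambda(x,y)\eqdef\sum_{k\ne k_0}e^{-(\frac12\Vert y-x-k\Vert_2^2-c(x,y))/\lambda}\ge 0 .
\]
An elementary estimate shows that for every $m=k-k_0\ne 0$ one has $\frac12\Vert v-m\Vert_2^2-\frac12\Vert v\Vert_2^2=\Vert m\Vert_2\,\mathrm{dist}(v,H_m)\ge \mathrm{dist}(v,\partial Q)$, where $v=[y-x]$, $Q=[-\frac12,\frac12]^d$ and $H_m$ is the perpendicular bisector of $0$ and $m$. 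Consequently $g_\lambda(x,y)\lesssim e^{-\mathrm{dist}([y-x],\partial Q)/\lambda}$, so $g_\lambda$ is bounded uniformly and $c-\tilde c_\lambda\lesssim\lambda$ everywhere, while $g_\lambda$ is exponentially small away from the cut locus.

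\paragraph{From costs to values.} Since $T_\lambda$ and $\tilde T_\lambda$ differ only through the transport cost, the entropic penalty being unchanged, $\tilde c_\lambda\le c$ yields $\tilde T_\lambda(\mu,\nu)\le T_\lambda(\mu,\nu)$, which is the left inequality. For the upper bound, insert the optimal Sinkhorn potentials $(u_\lambda,v_\lambda)$ for the cost $c$ into the dual functional of Eq.~\eqref{eq:dualentropyregularized} associated with $\tilde c_\lambda$. Using $e^{(c-\tilde c_\lambda)/\lambda}-1=g_\lambda$ and that $\gamma_\lambda(x,y)\eqdef e^{(u_\lambda(x)+v_\lambda(y)-c(x,y))/\lambda}\mu(x)\nu(y)$ is a probability density, this gives
\[
0\ \le\ T_\lambda(\mu,\nu)-\tilde T_\lambda(\mu,\nu)\ \lesssim\ \lambda\int_{(\TT^d)^2}g_\lambda\,\dd\gamma_\lambda .
\]
To bound the right-hand side, fix $r>0$ and split the integral over the cut-locus neighbourhood $N_r=\{(x,y):\mathrm{dist}([y-x],\partial Q)\le r\}$ and its complement. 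On the complement the first step gives $g_\lambda\lesssim e^{-r/\lambda}$, contributing $\lesssim\lambda e^{-r/\lambda}$. On $N_r$ we use the uniform bound on $g_\lambda$ and must estimate $\gamma_\lambda(N_r)$; here one combines the lower bound $c\ge\frac18-O(r)$ valid on $N_r$ (since $\mathrm{dist}([y-x],\partial Q)\le r$ forces $\Vert[y-x]\Vert_2\ge\frac12-r$), the regularity of the optimal plan from Lemma~\ref{lem:regularityplan}, and the a priori control on $(u_\lambda,v_\lambda)$, to conclude that $\gamma_\lambda(N_r)$ is exponentially small. Taking $r$ to be a suitably small constant then gives $T_\lambda(\mu,\nu)-\tilde T_\lambda(\mu,\nu)\le c_1 e^{-c_2/\lambda}$.

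\paragraph{From $T_\lambda$ to $S_\lambda$.} Applying the previous estimate to the three pairs $(\mu,\nu)$, $(\mu,\mu)$ and $(\nu,\nu)$ and using the definitions~\eqref{eq:sinkhorndivergence} of $S_\lambda$ and $\tilde S_\lambda$,
\[
\big\vert\tilde S_\lambda(\mu,\nu)-S_\lambda(\mu,\nu)\big\vert\le\big\vert T_\lambda(\mu,\nu)-\tilde T_\lambda(\mu,\nu)\big\vert+\tfrac12\big\vert T_\lambda(\mu,\mu)-\tilde T_\lambda(\mu,\mu)\big\vert+\tfrac12\big\vert T_\lambda(\nu,\nu)-\tilde T_\lambda(\nu,\nu)\big\vert,
\]
so retaining the worst of the three pairs of constants gives $\vert\tilde S_\lambda-S_\lambda\vert\le c_1'e^{-c_2/\lambda}$ with $c_1'=2c_1$.

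\paragraph{Main obstacle.} The crux is the exponential smallness of $\gamma_\lambda(N_r)$, i.e.\ showing that the entropic optimal plan places negligible mass near the cut locus. This is genuinely delicate: near corners of $Q$ the quantity $g_\lambda$ is of order $2^d$ rather than exponentially small, and a crude bound on $\gamma_\lambda(N_r)$ using only $\Vert c\Vert_\infty$ and the oscillation estimate for the potentials is not sufficient, as it yields only an error of order $\lambda$. One must exploit simultaneously that $c$ is bounded below on a neighbourhood of the cut locus and the fine control of the optimal potentials and plan in order to beat the possible growth of $\gamma_\lambda$ there; everything else in the argument is bookkeeping around this point.
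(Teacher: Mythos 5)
Your architecture is the same as the paper's up to the point where it matters: the pointwise comparison $\tilde c_\lambda\le c$ giving the left inequality, the insertion of the $c$-optimal potentials $(u_\lambda,v_\lambda)$ into the dual functional for $\tilde c_\lambda$ to get $0\le \frac12(T_\lambda-\tilde T_\lambda)\le\lambda\int g_\lambda\,\dd\gamma_\lambda$, the exponential smallness of $g_\lambda$ away from the cut locus, and the triangle inequality for the passage to $S_\lambda$ are all correct and match the paper. The gap is exactly where you flag it: you assert that $\gamma_\lambda(N_r)$ is exponentially small but do not prove it, and the ingredients you propose to use are not the ones that make it work. A lower bound $c\ge\frac18-O(r)$ on $N_r$ says nothing about the mass the optimal plan puts there (a plan can be forced to pay a large cost), and Lemma~\ref{lem:regularityplan} only gives oscillation control on $\log p_\lambda$ at scale $\lambda^{-1}$, which, as you yourself note, yields an $O(\lambda)$ error at best. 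So the "main obstacle" paragraph is an accurate diagnosis of an unresolved hole, not a resolution of it.

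What actually closes the gap in the paper is a chain of structural facts, none of which appear in your sketch: (i) by the regularity theory of optimal transport on the torus (Cordero-Erausquin; H\"older continuity of the densities suffices), the Brenier map $T$ is continuous and its graph $G=\{(x,T(x))\}$ does not meet the singular set of $(x,y)\mapsto\Vert[y-x]\Vert_2^2$, so by compactness $G$ is at distance $2\delta>0$ from the cut locus; (ii) $G$ is precisely the contact set $\{u_0(x)+v_0(y)=c(x,y)\}$ of the unregularized dual potentials, so by compactness again $u_0(x)+v_0(y)-c(x,y)\le-2\alpha$ outside the $\delta$-neighbourhood $G_\delta$; (iii) the entropic potentials $(u_\lambda,v_\lambda)$ converge \emph{uniformly} to $(u_0,v_0)$ as $\lambda\to0$, so for $\lambda$ small the density of $\gamma_\lambda$ is at most $e^{-\alpha/\lambda}$ off $G_\delta$ — in particular on $N_r\subset G_\delta^c$ for $r<\delta$. (The paper in fact splits along $G_\delta$ versus $G_\delta^c$ rather than along $N_r$ versus $N_r^c$, but for $r<\delta$ the two decompositions are interchangeable.) Note that this is why the constants in the lemma are allowed to depend on $\mu,\nu$ and the statement is purely asymptotic: $\delta$, $\alpha$ and the threshold $\lambda_0$ for uniform convergence are not quantitative. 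Without ingredient (i) the claim is simply false in spirit — if the optimal map could move mass across the cut locus, $\gamma_\lambda(N_r)$ would not be small — so this is a missing idea, not a missing computation.
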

In contrast to the other statements in this paper, this one is purely asymptotic in the sense that the constants may depend on $\mu$ and $\nu$. This is due to a technical difficulty near the cut-locus where the convergence of $\tilde c_\lambda$ towards $c$ is only in $O(\lambda)$ which is too slow for our purposes. We can avoid this difficulty by exploiting the fact that the optimal transport map stays away from the cut locus and using the uniform convergence of the dual potentials $(u_\lambda,v_\lambda)$ towards $(u_0,v_0)$ but we are not aware of quantitative versions of these results.
\begin{proof}
The inequality $\tilde T_\lambda(\mu,\nu)\leq T_\lambda(\mu,\nu)$ is immediate since $\tilde c_\lambda\leq c$. The main difficulty is thus to prove the other bound. For this, let $(u_\lambda,v_\lambda)$ be the unique pair of maximizers of Eq.~\eqref{eq:dualentropic} such that $u_\lambda(0)=0$. As $\lambda\to0$, this pair converges uniformly to a couple of functions $(u_0,v_0)$ which is the unique solution to the unregularized dual problem such that $u_0(0)=0$, see e.g.~\cite{berman2017sinkhorn}. Letting $\tilde F_\lambda$ be the dual of the regularized problem Eq.~\eqref{eq:dualentropyregularized} where $c$ is replaced by $\tilde c_\lambda$, we have $\frac12 \tilde T_\lambda(\mu,\nu) = \sup \tilde F_\lambda(u,v)$ where the supremum is over pairs of continuous functions on the torus. Thus we have 
\begin{align*}
\frac12 T_\lambda(\mu,\nu)-\frac12 \tilde T_\lambda(\mu,\nu) &\leq F_\lambda(u_\lambda,v_\lambda)-\tilde F_\lambda(u_\lambda,v_\lambda) \\
&= \lambda \int_{(\TT^d)^2} e^{(u_\lambda(x)+ v_\lambda(y)-c(x,y))/\lambda}\big(e^{(c-\tilde c)/\lambda}-1\big)\dd \mu(x)\dd \nu(y).
\end{align*}
It remains to bound this integral and we will do so by dividing the domain $(\TT^d)^2$ into two sets. 

By the regularity theory of optimal transport on the torus~\cite{cordero1999transport}, we know that $u_0$ is continuously differentiable (note that our assumption on the regularity of $\mu$ and $\nu$ is indeed stronger than H\"older continuity). It follows by~\cite[Lem.~2.4]{berman2017sinkhorn} that the optimal transport map $T$ is continuous and its graph $G = \{(x,T(x)) \;;\; x\in \TT^d\}$ does not intersect the singular set $S$ of $(x,y) \mapsto \Vert [y-x]\Vert_2^2$, i.e.\ the set where this function is not differentiable. As both sets are compact, they are thus at a positive distance $2\delta>0$ from each other. Let $G_{\delta}$ be the closed set of points that are at a distance less than or equal to $\delta$ from $G$ (which is itself at a distance $\delta$ from $S$). Since in our context $G$ is precisely the set of points $(x,y)$ where $u_0(x)+v_0(y)=c(x,y)$ (see again~\cite[Lem.~2.4]{berman2017sinkhorn}), there exists $\alpha>0$ such that $u_0(x)+v_0(x)-c(x,y)\leq -2\alpha$ for all $(x,y)\in G_\delta^c = (\TT^d)^2\setminus G_\delta$. 

Let $(I)$ and $(II)$ be the value of the integral above on $G_\delta^c$ and $G_\delta$ respectively, so that $T_\lambda(\mu,\nu)-\tilde T_\lambda(\mu,\nu)\leq 2(I) + 2(II)$. On the one hand, by uniform convergence of the potentials, there exists $\lambda_0>0$ such that $\forall \lambda<\lambda_0$, $\Vert u_\lambda -u_0\Vert_\infty + \Vert v_\lambda -v_0\Vert_\infty \leq \alpha$ and thus $\forall \lambda\leq \lambda_0$,
$$
(I) \leq \lambda e^{-\alpha/\lambda}\vert e^{\Vert c-\tilde c_\lambda\Vert_\infty/\lambda} +1\vert =o(e^{-\alpha/(2\lambda)})
$$
because $\tilde c_\lambda$ converges uniformly to $c$ as $\lambda\to 0$. On the other hand
$$
(II) \leq \lambda \sup_{z\in G_\delta}( e^{(c(z)-\tilde c(z))/\lambda} -1) = \lambda \sup_{z\in G_\delta} \sum_{k\in \mathbb{Z}^d\setminus \{k_0(z)\}} e^{(\Vert z-k_0(z)\Vert_2^2 - \Vert z - k\Vert_2^2)/(2\lambda)}
$$
where $k_0(z)$ is such that $\Vert [z]\Vert_2 = \Vert z - k_0\Vert_2$ and is unique for $z\in G_\delta$. Letting $\beta = \inf_{z\in G_\delta, k\neq k_0(z)} \Vert z-k\Vert_2^2 - \Vert z-k_0(z)\Vert_2^2$, we have $\beta>0$ since $G_\delta$ is at a positive distance from the singular set $S$ and we have $(II)\lesssim \lambda e^{-\beta/(2\lambda)}$ because the series $\sum_{k\neq k_0} e^{(\beta + \Vert z-k_0\Vert_2^2-\Vert z-k\Vert_2^2)/(2\lambda)}$ is nonincreasing in $\lambda$ (notice that the exponent is nonpositive). Summing $(I)$ and $(II)$ leads to the result.
\end{proof}

We are finally in a position to prove Proposition~\ref{prop:gridperf}.
\begin{proof}[Proof of Proposition~\ref{prop:gridperf}] 
We decompose the error as
$$
\vert S_{\lambda}(\mu_h,\nu_h) - W_2^2(\mu,\nu)\vert \leq 
\vert S_{\lambda}(\mu_h,\nu_h) - S_{\lambda}(\mu,\nu)\vert + 
\vert S_{\lambda}(\mu,\nu) -\tilde S_{\lambda}(\mu,\nu)\vert + 
\vert \tilde S_{\lambda}(\mu,\nu) - W_2^2(\mu,\nu)\vert.
$$
The first term is in $O(h^2(\lambda^{-1}+M+1))$ by Proposition~\ref{prop:grid}. The second term is bounded by $c_1e^{-c_2/\lambda}$ by Lemma~\ref{lem:fromheattotorus}. The third term is bounded by $(\lambda^2/4)\max\{I_0(\mu,\nu),I_0(\mu,\mu)+I_0(\nu,\nu)\}$ as seen in Eq.~\eqref{eq:biastorus}, which is a variation of Theorem~\ref{th:bias}. Moreover, the assumption that $\mu$ and $\nu$ have $M$-Lipschitz continuous log-densities leads to the bound $I_0(\mu,\mu),I_0(\nu,\nu)\leq M^2$, which justifies why the statement of Proposition~\ref{prop:gridperf} does not requires specifically that these quantities be finite. Thus, we have
$$
\vert S_{\lambda}(\mu_h,\nu_h) - W_2^2(\mu,\nu)\vert \lesssim h^2\lambda^{-1} + \lambda^2.
$$
Minimizing in $\lambda$ suggests to take $\lambda=h^{2/3}$ and leads to an error bound in $O(h^{4/3})$. In terms of the accuracy $\epsilon$, we thus have $h\asymp \epsilon^{3/4}$ and $\lambda\asymp \epsilon^{1/2}$. The computational complexity bound follows by Proposition~\ref{prop:sinkhornscomputational} which gives a bound in $O(n^2\lambda^{-1}\epsilon^{-1})$ and the fact that $n=h^{-d}\asymp \epsilon^{-3d/4}$, hence a bound in $O(\epsilon^{-3d/2-3/2})$. 

For the computational complexity bound via $T_\lambda$, we use the error decomposition
$$
\vert T_\lambda(\mu_h,\nu_h)-W_2^2(\mu,\nu)\vert \leq 
\vert T_\lambda(\mu_h,\nu_h)-T_0(\mu_h,\nu_h)\vert + \vert T_0(\mu_h,\nu_h)-T_0(\mu,\nu)\vert
$$
where the first term is in $O(\lambda\log(n))$ and the second term is in $O(h)$ by Proposition~\ref{prop:grid}. Thus to reach an accuracy $\epsilon>0$, we may choose $h\asymp \epsilon$ and $\lambda\asymp \epsilon/\log(n)$ which leads to a time complexity in $\tilde O(\epsilon^{-2d-2})$.
\end{proof}

\section{Analysis of the Gaussian case}\label{app:gaussian}
Let $\mu =\mathcal{N}(a,A)$ and $\nu = \mathcal{N}(b,B)$ be Gaussian probability distributions with means $a,b \in \RR^d$ and positive definite covariances $A,B \in \RR^{d\times d}$. The following explicit formula for $T_\lambda$ is proven in~\cite{anonymous2020gaussian}:
$$
T_\lambda(\mu,\nu) =  \Vert a-b\Vert_2^2 +  \tr(A) +  \tr(B) - 2\tr(D^{AB}_\lambda) + {d\lambda}(1-\log(2\lambda)) +\lambda\log \det (2D^{AB}_\lambda +\lambda I)
$$
where $A^{1/2}$ denotes the unique positive definite square root of a positive definite matrix $A$ and $D^{AB}_\lambda = (A^{1/2}BA^{1/2}+\lambda^2 I/4)^{1/2}$ (notice that $A^{1/2}BA^{1/2}=M^\top M$ for $M=B^{1/2}A^{1/2}$ is positive definite). When $\lambda =0$, we recover the well known explicit formula (see e.g.~\cite{bhatia2019bures}):
$$
W_2^2(\mu,\nu) = \Vert a-b\Vert_2^2 +  \tr(A) +  \tr(B) - 2\tr(S). 
$$
where $S=(A^{1/2}BA^{1/2})^{1/2}$. Notice that this expression involves the squared Bures distance~\cite{bhatia2019bures} between positive definite matrices defined as $\bures^2(A,B) \eqdef \tr(A) +\tr(B) - 2\tr(S)$.

The expression above leads to the following formula for $\Delta = S_\lambda(\mu,\nu) - W_2^2(\mu,\nu)$:
\begin{multline*}
    \Delta = (\tr(D^{AA}_\lambda) - \tr(D^{AA}_0)) + (\tr(D^{BB}_\lambda) - \tr(D^{BB}_0)) - 2 (\tr(D^{AB}_\lambda) - \tr(D^{AB}_0)) \\
    + \frac{\lambda}{2} \big(2\log \det (2D^{AB}_\lambda +\lambda I)-  \log \det (2D^{AA}_\lambda +\lambda I) - \log \det (2D^{BB}_\lambda +\lambda I\big).
\end{multline*}
\paragraph{Fourth-order expansion of $\Delta$.} Let us first expand individual terms using the fact that all the matrices involved are positive definite. We have
\begin{align*}
 D_\lambda^{AA} &=
 A(I +(\lambda^2/4)A^{-2})^{1/2}\\
 &=A +\frac{\lambda^2}{8}A^{-1}-\frac{\lambda^4}{128}A^{-3} +O(\lambda^5).
\end{align*}
Also, since $\log \det (I+\lambda A) = \lambda \tr(A) -(\lambda^2/2) \tr(A^2) +(\lambda^3/3)\tr(A^3)+ O(\lambda^4)$, we  obtain the expansion
\begin{align*}
    \frac{\lambda}{2}\log\det (2D_\lambda^{AA}+\lambda I) &= \frac\lambda2 \log\det (2A +(\lambda^2/4)A^{-1} +\lambda I +O(\lambda^4))\\
    &=\frac\lambda2\log\det (2A) +\frac\lambda2\log\det(I +(\lambda/2)A^{-1} +(\lambda^2/8)A^{-2}+O(\lambda^4))\\
    &=\frac\lambda2\log\det (2A) +\frac{\lambda^2}{4}\tr(A^{-1}) - \frac{\lambda^4}{96}\tr(A^{-3}) + O(\lambda^5).
\end{align*}
Putting all pieces together with the notation $S = (A^{1/2}BA^{1/2})^{1/2}$ leads to
\begin{multline*}
\Delta = \frac{\lambda^2}{8} \tr(A^{-1}) -\frac{\lambda^4}{128}\tr(A^{-3}) + \frac{\lambda^2}{8} \tr(B^{-1}) -\frac{\lambda^4}{128}\tr(B^{-3})-\frac{\lambda^2}{4} \tr(S^{-1}) +\frac{\lambda^4}{64}\tr(S^{-3}) \\
+\lambda \log\det(2S) -\frac{\lambda}{2} \log\det(2A)  -\frac{\lambda}{2} \log\det(2B)
\\
+\frac{\lambda^2}{2} \tr(S^{-1}) 
-\frac{\lambda^2}{4} \tr(A^{-1}) 
 -\frac{\lambda^2}{4} \tr(B^{-1}) 
 -\frac{\lambda^4}{48}\tr(S^{-3})
 +\frac{\lambda^4}{96}\tr(A^{-3})
 +\frac{\lambda^4}{96}\tr(B^{-3})
 + O(\lambda^5).
\end{multline*}
The $\log\det$ terms cancel each other and some simplifications in the other terms lead to
$$
\Delta = \frac{\lambda^2}{8} \big(2 \tr(S^{-1}) - \tr(A^{-1}) - \tr(B^{-1})\big) - \frac{\lambda^4}{384}\big(2 \tr(S^{-3}) - \tr(A^{-3}) - \tr(B^{-3})\big)+ O(\lambda^5).
$$
Interestingly, this expression can be expressed purely in terms of Bures distances:
$$
S_\lambda(\mu,\nu) - W_2^2(\mu,\nu) = -\frac{\lambda^2}{8} \bures^2(A^{-1},B^{-1}) + \frac{\lambda^4}{384}\bures^2(A^{-3},B^{-3}) +O(\lambda^5).
$$
This shows that the terms in this expansion are non-zero unless $A=B$ and also determines their sign.

\section{Numerical settings and additional experiments}\label{app:additionalnumerics}

\subsection{Sampling method} In this paragraph, we detail the setting of the \emph{random sampling} experiments (Figure~\ref{fig:sampling} and Figure~\ref{fig:sampling_appendix}). In those experiments, the distributions $\mu$ and $\nu$ are elliptically contoured and centered, which allows to have a closed form expression for the optimal transport cost $T_0$ and the dual potential $\varphi$ (the Lagrange multiplier associated to the first marginal constraint in the computation of $T_0(\mu,\nu)$ in Eq.~\eqref{eq:entropycost}), which only depends on the two covariances~\cite{bhatia2019bures}. Specifically, given two measures $\mu,\nu$ that belong to the same family of elliptically contoured distributions, with respective covariances $A$ and $B$ and with $0$ means, we have
\begin{align*}
T_0(\mu,\nu) = \bures^2(A,B) && \text{and} && \varphi(x) = x^\top (\mathrm{Id}-M)x
\end{align*}
where $\bures^2(A,B) =\tr (A) +\tr(B) -2\tr(S)$ and $M = A^{1/2}SA^{1/2}$ where $S$ is as defined in Appendix~\ref{app:gaussian}. Let us detail how we have chosen the covariances and our choice of elliptically contoured distribution.

\paragraph{Choice of the covariances.} The covariances $A, B \in \RR^{d\times d}$ are generated  randomly, independently and identically according to the following process, that we detail for $A$. Let $M\in \RR^{d\times k}$ be a random matrix with i.i.d.~entries following a standard normal distribution $\mathcal{N}(0,1)$, with $k = d/\alpha$ for some $\alpha \in (0,1)$. We then define $\tilde A = MM^\top$, which is a random positive semidefinite matrix. By non-asymptotic versions of the Mar\v{c}enko-Pastur Theorem (e.g.~\cite[Eq.(1.11)]{wainwright2019high}), the eigenvalues of $\tilde A$ are contained within a small enlargement of the interval $[(1-\sqrt{\alpha})^2,(1+\sqrt{\alpha})^2]$ with a high probability that increases with $d$. We then define $A = \tilde A/\tr \tilde A$. With our choice $\alpha = 1/3$, this allows to define generic covariance matrices of trace $1$ with a controlled anisotropy: the ratio between the largest and smallest eigenvalue is with high probability of order $0.07$ for large $d$ (but note that since we work with relatively small values of $d$, this ratio is subject to  fluctuations).

\paragraph{Choice of the distributions.} Given a covariance $A$ we generate a sample $X$ as follows:
\begin{enumerate}
    \item $U\sim \mathcal{U}(\mathbb{S}^{d-1})$ ( $U$ is uniformly distributed on the sphere in $\RR^d$)
    \item $Z \sim \mathcal{N}(0,1)$
    \item $R = \alpha \vert  \arctan(Z/\beta)\vert^{1/d}$ where $\alpha>0$ is such that $\Esp [R^2] = d$
    \item $X = R\cdot A^{1/2}U$
\end{enumerate}
Here $\beta>0$ is a free parameter that determines the shape of the distribution and we have chosen $\beta =2$ because it tends to yield nice bell shaped densities (see Figure~\ref{fig:densityrandomsampling}). Also, $\alpha$ is a quantity that only depends on $d$ and $\beta$ that we estimate via Monte-Carlo integration. Let us describe the distribution of $X$.
\begin{proposition}\label{prop:densityexplicit}
The law of $X$ is elliptically contoured, centered, and has a compact support. Its covariance is $A$ and its density with respect to the Lebesgue measure (denoted by $\mu(x)$) is given by
\begin{equation}\label{eq:densityexplicit}
\mu(x) \propto (1+\tan(y)^2)\exp(-\beta^2\tan(y)^2/2)
\end{equation}
where $y = (\Vert x\Vert_{A^{-1}}/\alpha)^d$ and $\Vert x\Vert_{A^{-1}}^2 = x^\top A^{-1} x$. In particular, if $A$ is nonsingular then its Fisher information is finite: $I_0(\mu,\mu)<\infty$.
\end{proposition}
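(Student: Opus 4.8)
The plan is to exploit the fact that the construction produces $X = A^{1/2} Y$ with $Y = R U$, where the nonnegative radius $R$ is independent of the uniformly distributed direction $U \sim \mathcal{U}(\mathbb{S}^{d-1})$; this structure already delivers four of the five claims. By definition such an $X$ is elliptically contoured, and it is centered because $-U$ has the same law as $U$ while $R \ge 0$ is independent of it, so $\Esp[Y] = \Esp[R]\,\Esp[U] = 0$. Compactness of the support follows from $|\arctan(Z/\beta)| < \pi/2$ almost surely, which forces $0 \le R < \alpha(\pi/2)^{1/d}$ and hence confines $X$ to the ellipsoid $\{\,x : x^\top A^{-1} x \le \alpha^2 (\pi/2)^{2/d}\,\}$. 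For the covariance I would combine the standard identity $\Esp[U U^\top] = \tfrac1d I_d$ for the uniform law on the sphere with the normalization $\Esp[R^2] = d$, which gives $\Esp[Y Y^\top] = \Esp[R^2]\,\Esp[U U^\top] = I_d$ and therefore $\Esp[X X^\top] = A^{1/2} I_d A^{1/2} = A$.

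To obtain the density formula \eqref{eq:densityexplicit} I would chain three elementary changes of variables. First, setting $W = \arctan(Z/\beta)$ (equivalently $Z = \beta \tan W$ with $W \in (-\pi/2, \pi/2)$), the Gaussian density of $Z$ transforms into $f_W(w) \propto (1 + \tan^2 w)\exp(-\beta^2 \tan^2(w)/2)$ on $(-\pi/2,\pi/2)$; since $W$ is symmetric and $R = \alpha |W|^{1/d}$, a further substitution $|W| = (r/\alpha)^d$ produces the density $f_R$ of $R$ on $(0, \alpha(\pi/2)^{1/d})$. Second, because $Y = R U$ has a radial law, its density $g$ obeys $f_R(r) = \omega_{d-1}\, r^{d-1}\, g(r)$ where $\omega_{d-1}$ is the surface area of $\mathbb{S}^{d-1}$, and the Jacobian factor $r^{d-1}$ is cancelled exactly by the one generated in the previous step, leaving $g(r) \propto (1 + \tan^2 s)\exp(-\beta^2 \tan^2(s)/2)$ with $s = (r/\alpha)^d$. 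Third, $X = A^{1/2} Y$ has density $\mu(x) = (\det A)^{-1/2}\, g\big(\Vert A^{-1/2} x\Vert_2\big)$, and since $\Vert A^{-1/2} x\Vert_2^2 = x^\top A^{-1} x = \Vert x\Vert_{A^{-1}}^2$, this is precisely \eqref{eq:densityexplicit} with $y = (\Vert x\Vert_{A^{-1}}/\alpha)^d$.

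For the last claim, that $I_0(\mu,\mu) = \int_{\RR^d} \Vert \nabla \log \mu(x)\Vert_2^2\, \mu(x)\, \dd x < \infty$ when $A$ is nonsingular, I would write $\log \mu(x) = \mathrm{const} + \log(1 + \tan^2 y) - \tfrac{\beta^2}{2}\tan^2 y$ with $y = (\Vert x\Vert_{A^{-1}}/\alpha)^d$ and differentiate, obtaining $\nabla \log \mu = \big(2\tan y - \beta^2 \tan y\, (1+\tan^2 y)\big)\, \nabla y$ after using $\sec^2 y = 1 + \tan^2 y$. On the compact support of $\mu$ the map $x \mapsto \nabla y$ is bounded (it is continuous, equals a constant in modulus when $d = 1$, and tends to $0$ at the origin when $d \ge 2$), so $\Vert \nabla \log \mu(x)\Vert_2 \lesssim P(\tan y)$ for a fixed polynomial $P$, while $\mu(x) \lesssim (1+\tan^2 y)\exp(-\beta^2 \tan^2(y)/2)$; hence $\Vert \nabla \log \mu\Vert_2^2\, \mu \lesssim Q(\tan y)\exp(-\beta^2 \tan^2(y)/2)$ for a fixed polynomial $Q$, a quantity that stays bounded as $y \to \pi/2$. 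Passing to the coordinates $x = A^{1/2} z$ and then to polar coordinates, $\int_{\RR^d} h(\Vert x\Vert_{A^{-1}})\, \dd x = (\det A)^{1/2} \omega_{d-1} \int_0^{\alpha(\pi/2)^{1/d}} h(r)\, r^{d-1}\, \dd r$, and the substitution $y = (r/\alpha)^d$ (under which $r^{d-1}\,\dd r = \tfrac{\alpha^d}{d}\,\dd y$) reduces the Fisher integral to a constant multiple of $\int_0^{\pi/2} Q(\tan y)\exp(-\beta^2 \tan^2(y)/2)\, \dd y$; the only candidate singularity is at $y = \pi/2$, where with $t = \tan y \to \infty$ and $\dd y = \dd t/(1+t^2)$ the integrand is $O\big(Q(t)\, e^{-\beta^2 t^2/2}\big)$, which is integrable.

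The main obstacle I anticipate is exactly this boundary analysis: one must check that the super-exponential vanishing of $\mu$ at the boundary of its support (driven by the $-\tfrac{\beta^2}{2}\tan^2 y$ term) dominates the polynomial blow-up of $\Vert \nabla \log \mu\Vert_2^2$ there. The behaviour at the origin --- where for odd $d \ge 3$ the function $x \mapsto y$ is only finitely, not infinitely, differentiable --- is a minor technical point, handled by noting that the prefactor $2\tan y - \beta^2 \tan y\,(1+\tan^2 y)$ in $\nabla \log \mu$ vanishes there while $\nabla y$ stays bounded. The remaining steps are routine changes of variables.
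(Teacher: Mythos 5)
Your proposal is correct and follows essentially the same route as the paper's proof: the same elementary arguments for ellipticity, centering, compact support and covariance, the same chain of changes of variables (through $\arctan(Z/\beta)$, then $|W|=(r/\alpha)^d$, then the radial-density relation whose $r^{d-1}$ Jacobian cancels) for the density formula, and a direct polar-coordinate computation showing the Gaussian factor $e^{-\beta^2\tan^2(y)/2}$ dominates the polynomial growth of $\Vert\nabla\log\mu\Vert_2^2$ for the finiteness of the Fisher information. The only cosmetic differences are that the paper invokes a cited theorem on elliptically contoured distributions for the radial-density relation you derive by hand, and reduces the Fisher integral via the substitution $z=\tan y$ rather than bounding the integrand on the compact support.
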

It follows that if $\mu$ and $\nu$ are the densities of random variables generated via this procedure, with respective covariances $A$ and $B$, then Theorem~\ref{th:bias} together with Proposition~\ref{cor:FirstFisherBound} guarantee that Proposition~\ref{prop:samplesinkhorn} applies. We illustrate the results of Proposition~\ref{prop:densityexplicit} in Figure~\ref{fig:densityrandomsampling}.

\begin{proof}
By construction $\mu$ is elliptically contoured and centered~\cite[Chap.~2]{fang2018symmetric}. It is compactly supported because the range of $z\mapsto \vert \arctan(z/\beta)\vert$ is $[0,\pi/2)$. Also the covariance of $X$ is 
$$
\Esp\big [XX^\top \big] = \frac1d \Esp\big[R^2\big] A =A.
$$
Let $Y =\arctan(Z/\beta)$ and let $F_Y$ (resp.~$f_Y$) be the cumulative (resp.~probability) distribution function of $Y$. We have for $x\in \mathbb{R}$,
$$
F_R(x) = \mathbf{P}\big[ R\leq x\big] = \mathbf{P}\big[ \alpha \vert \arctan(Z/\beta)\vert^{1/d} \leq x \big] =\mathbf{P}\big[ \vert Y\vert \leq (x/\alpha)^d \big] = F_{\vert Y\vert}((x/\alpha)^d).
$$
Differentiating this relation, it follows that
$
f_R(x)\propto x^{d-1} f_{\vert Y\vert}((x/\alpha)^d).
$
Then by~\cite[Thm.~2.9 \& Eq.~(2.43)]{fang2018symmetric}, we have
$$
\mu(x) \propto \Vert x\Vert^{1-d}_{A^{-1}} f_R(\Vert x\Vert_{A^{-1}}) \propto f_{\vert Y\vert}((\Vert x\Vert_{A^{-1}}/\alpha)^d).
$$
It thus remains to compute the density $f_{\vert Y\vert}$ which, by symmetry of $Y$ around $0$, is precisely twice the density $f_{Y}$ for nonnegative arguments. Denoting $g(z) = \arctan(z/\beta)$, by the change of variable formula, we have
$$
f_Y(y) = \frac{f_Z(g^{-1}(y))}{g'(g^{-1}(y))}\propto (1+\tan(y)^2)\cdot \exp(-\beta^2\tan(y)^2/2)
$$
which gives the density of $\mu$, up to a multiplicative constant. Let us now show that the Fisher information $I_0(\mu,\mu) = \int_{\RR^d} \Vert\frac{\nabla \mu(x)}{\mu(x)}\Vert^2_2\mu(x)\dd x$ is finite, with the assumption that $A=\mathrm{Id}$ for simplicity (the general case can be treated similarly). We have $\mu(x) = f_Y(h(\Vert x\Vert_2))$ with $h(r) = (r/\alpha)^d$ and by direct computations:
\begin{align*}
    I_0(\mu,\mu) &\propto \int_{\RR^d}\!\Big(\frac{f'(h(\Vert x\Vert_2))}{f(h(\Vert x\Vert_2))}\Big)^2 \Vert x\Vert^{2d-2}_2 f(h(\Vert x\Vert_2))\dd x 
     \propto \int_{0}^{\pi/2}\!\Big(\frac{f'(h(r))}{f(h(r))}\Big)^2 r^{3d-3} f(h(r))\dd r\\
    f'_Y(y) &\propto \exp(-\beta^2\tan(y)^2/2)\big(\beta^2\tan(y)(1+\tan(y)^2)(1-\beta^2\tan(y)^2)\big).
\end{align*}
Then by posing $z = \tan h(r)$,  we get
$$
I_0(\mu,\mu) \propto \int_{\RR_+} (\beta^4 z^2 (1-\beta^2z^2)^2 \arctan(z)^{2-2/d}) \exp(-\beta^2z^2/2)\dd z 
$$
where $\propto$ in those computations just means that the right-hand side is finite if and only if the left-hand side is finite. Since the right-hand side is finite, this shows that $I_0(\mu,\mu)<\infty$. 
\end{proof}
\begin{figure}
\centering
\begin{subfigure}{0.4\linewidth}
    \centering
    \includegraphics[scale = 0.4]{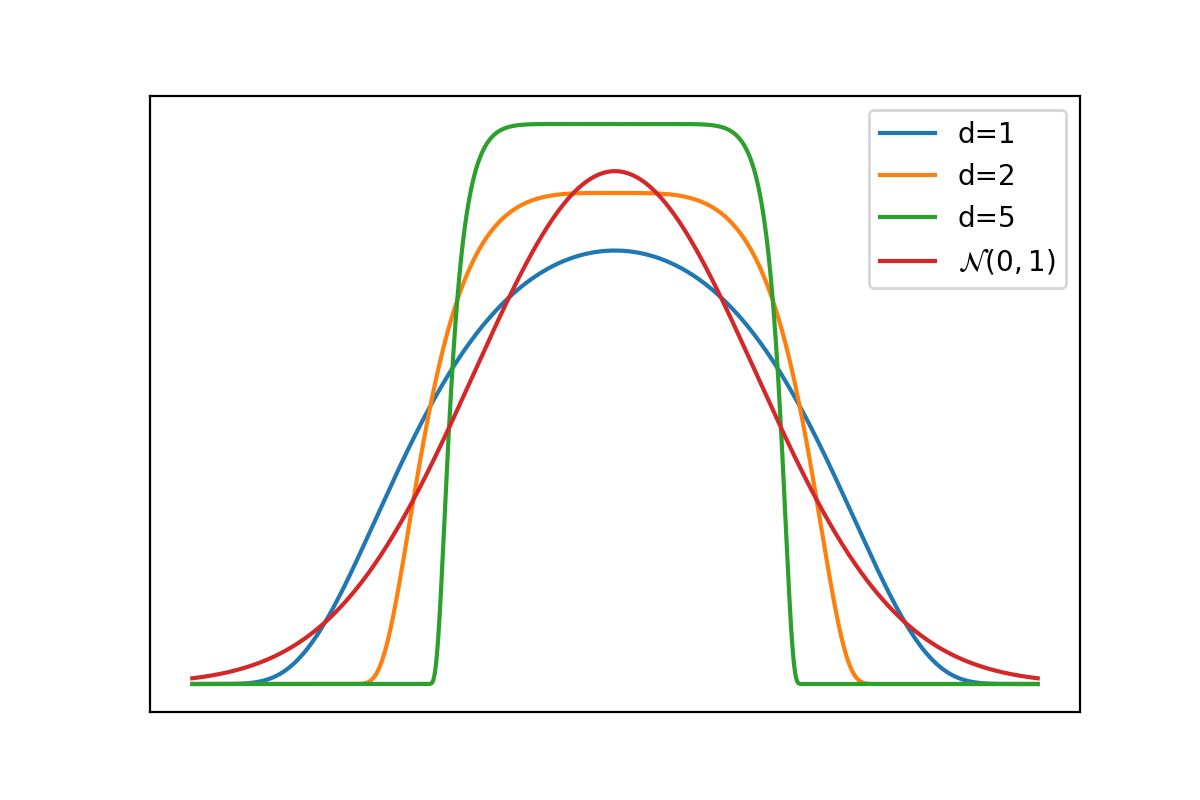}
\end{subfigure}\hspace{1cm}
\begin{subfigure}{0.4\linewidth}
    \centering
    \includegraphics[scale=0.16]{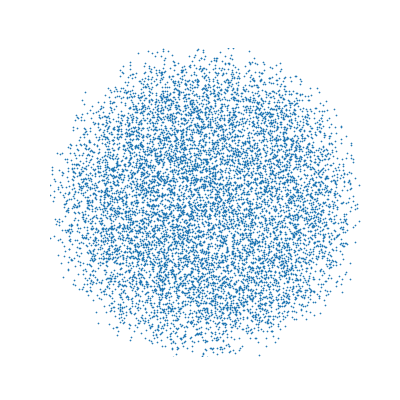}
\end{subfigure}
    \caption{Density used for the random sampling experiments, when $A=\mathrm{Id}/d$. Left: radial profile of the density as given by Eq.~\eqref{eq:densityexplicit}, i.e.~$t\mapsto \mu(t\vec{u})$ for some $\vec{u}\in \mathbb{S}^{d-1}$. Right: $10^4$ samples for $d=2$.}
    \label{fig:densityrandomsampling}
\end{figure}

\subsection{Additional random sampling experiment}
On Figure~\ref{fig:sampling_appendix}, we show the same experiment as in Section~\ref{sec:numerics} but in dimension $d=10$ and moreover we report the error on the transport cost $T_0(\mu,\nu)$ and the rate of Theorem~\ref{thm:plug-in}, which were not shown on Figure~\ref{fig:sampling}. The plot on the right shows the estimation error on $T_0(\mu,\nu)$, which is the quantity that we control in our theoretical analysis. This plot confirms several of our results: (i) the convergence rate in $n^{-2/d}$ of the plug-in estimator proved in Theorem~\ref{thm:plug-in} (note that we compute it with a small entropic regularization, which might explain the slight deviation from the rate $n^{-2/d}$ that we observe for $n$ large), and (ii) the fact that $T_\lambda$ has a much larger bias than $S_\lambda$ and $R_\lambda$. Even more interestingly, $S_\lambda$ and $R_\lambda$ have a smaller error than the plug-in estimator. However, we should also be cautious when interpreting such a plot because $T_0(\mu,\nu)$ is a scalar, and it is easy to make the error vanish when varying a parameter, such as $n$ or $\lambda$. In particular, the local minimum observed for $S_\lambda$ and $R_\lambda$ is simply due to the fact that the error changes its sign as $n$ grows.

This phenomenon led us to report the error on a different quantity, the $L^1$ error on the potential, which is not subject to this phenomenon and which also raises interesting open questions. Notice however that this quantity may behave quite differently than the estimation error on $T_0(\mu,\nu)$. In particular, we see on Figure~\ref{fig:densityrandomsampling}-(left), that the rate of convergence of the plug-in estimator is in fact faster than $n^{-2/d}$ in this experiment.

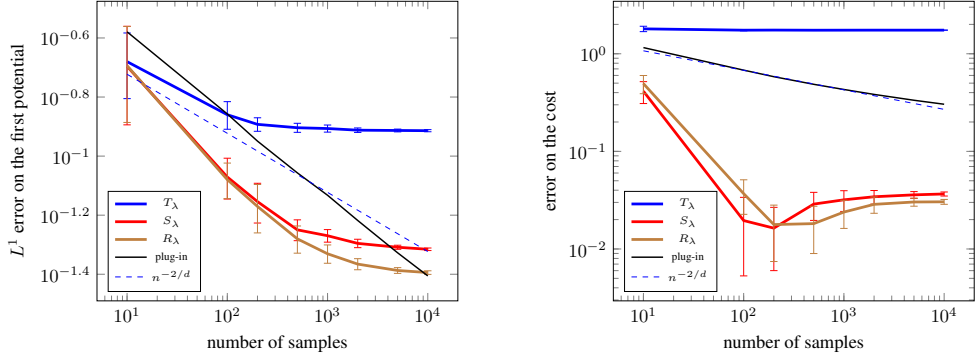
\begin{figure}[h]
\centering
\hbox to \linewidth{ \hss
  \begin{subfigure}[b]{0.5\textwidth}
  \centering
    \begin{tikzpicture}[scale = 0.7]
\begin{loglogaxis}[xlabel = number of samples, ylabel = $L^1$ error on the first potential, no markers, legend pos=south west,  legend style={font=\tiny}]
  \addplot[line width = 1.5pt, color = blue,  error bars/.cd, y dir=both, y explicit] table [x= Nsamples, y= error_on_potentials, y error = std_on_potentials, col sep=space] {figures/OT_lambda_1_dimension_10.csv};
  \addlegendentry{$T_\lambda$}
  \addplot[line width = 1.5pt, color = red,  error bars/.cd, y dir=both, y explicit] table [x= Nsamples, y= error_on_potentials, ,y error = std_on_potentials, col sep=space] {figures/S_lambda_1_dimension_10.csv};
  \addlegendentry{$S_\lambda$}
  \addplot[line width = 1.5pt, color = brown,  error bars/.cd, y dir=both, y explicit] table [x= Nsamples, y= error_on_potentials, y error = std_on_potentials, col sep=space] {figures/R1_lambda_1_dimension_10.csv};
  \addlegendentry{$R_\lambda$}
  \addplot[thick, black] table [x= Nsamples, y= error_on_potentials, col sep=space] {figures/OT_lambda_0.01_dimension_10.csv};
    \addlegendentry{plug-in}
            \addplot+[samples at = {10, 100, 200, 500, 1000, 2000, 5000, 10000}, dashed ] {0.3*(x)^(-2/10)};
            \addlegendentry{$n^{-2/d}$}
\end{loglogaxis}
\end{tikzpicture}
\end{subfigure}
\hss
\begin{subfigure}[b]{0.5\textwidth}
\centering
    \begin{tikzpicture}[scale = 0.7]
  \begin{loglogaxis}[xlabel = number of samples, ylabel = error on the cost, no markers, legend pos=south west,  legend style={font=\tiny}]
  \addplot[line width = 1.5pt, color = blue,  error bars/.cd, y dir=both, y explicit] table [x= Nsamples, y= error_on_cost,  y error = std_on_cost, col sep=space] {figures/OT_lambda_1_dimension_10.csv};
  \addlegendentry{$T_\lambda$}
  \addplot[line width = 1.5pt, color = red,  error bars/.cd, y dir=both, y explicit] table [x= Nsamples, y= error_on_cost, y error = std_on_cost, col sep=space] {figures/S_lambda_1_dimension_10.csv};
  \addlegendentry{$S_\lambda$}
  \addplot[line width = 1.5pt, color = brown,  error bars/.cd, y dir=both, y explicit] table [x= Nsamples, y= error_on_cost,  y error = std_on_cost, col sep=space] {figures/R1_lambda_1_dimension_10.csv};
  \addlegendentry{$R_\lambda$}
  \addplot[thick, black] table [x= Nsamples, y= error_on_cost, col sep=space] {figures/OT_lambda_0.01_dimension_10.csv};
    \addlegendentry{plug-in}
            \addplot+[samples at = {10, 100, 200, 500, 1000, 2000, 5000, 10000}, dashed ] {1.7*(x)^(-2/10)};
            \addlegendentry{$n^{-2/d}$}
\end{loglogaxis}
\end{tikzpicture}
\end{subfigure}
\hss
}
  \caption{$L^1$ error  on the first potential (left) and error on the estimated cost (right) for different estimators, for $\mu,\nu$ smooth compactly supported distributions with $d=10$, as a function of $n$ for $\lambda = 1$. Error bars show the standard deviation on 30 realizations}\label{fig:sampling_appendix}
\end{figure}

\subsection{Additional figures for the discretization experiment}
Figure~\ref{fig-deterministic-potentials} shows the same setting as  on Figure~\ref{fig-deterministic} and gives more details. The densities of $\mu$ and $\nu$ on the $1$-dimensional torus $\TT$ are shown on the top row at several levels of discretization. The two other rows show the evolution of the estimated potentials as $n$ varies for the optimal $\lambda$ (middle row) or as $\lambda$ varies for $n$ large (bottom row) towards the true potentials $(u_0,v_0)$ (shown in dark color). Here $u_0$ is the Lagrange multiplier associated to the first marginal constraint in the computation of $T_0(\mu,\nu)$ in Eq.~\eqref{eq:entropycost} and $v_0$ is the one associated to the second marginal constraint. On Figure~\ref{fig-deterministic-potentials}, we denote by $(u_h,v_h)$ the potentials associated to the estimator $T_\lambda$ and by $(\bar u_h,\bar v_h)$ those associated to the estimator $S_\lambda$, as defined in Section~\ref{sec:numerics}. This figure illustrates that for $\lambda$ large, the error is systematically smaller with the debiasing terms.

\begin{figure}[h]
		\centering
		\includegraphics[width=\linewidth]{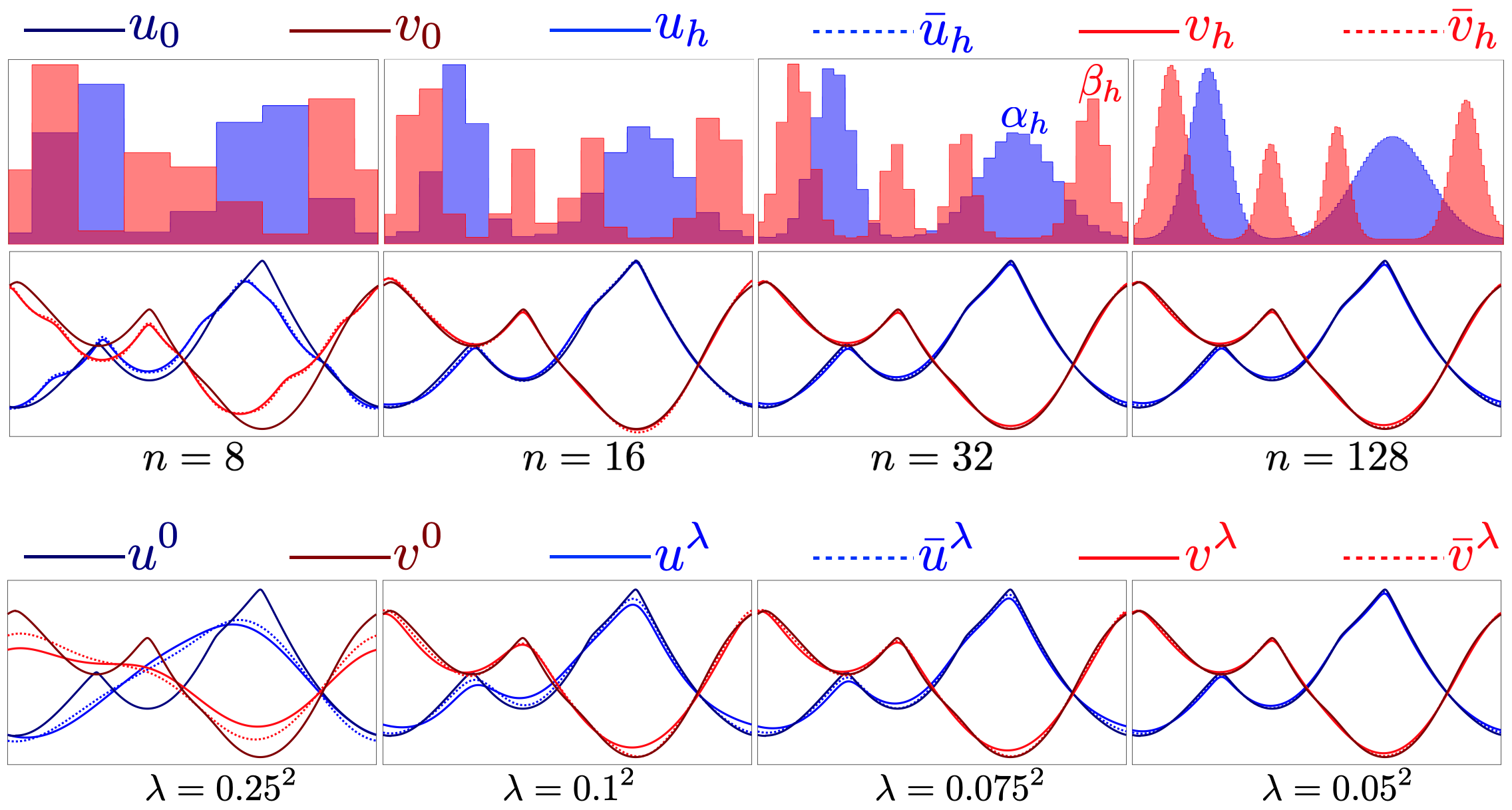}
	\caption{
	    Rows 1 and 2: convergence of the dual potentials $(u_{0,h},v_{0,h})$ and $(\bar u_{0,h},\bar v_{0,h})$ towards $(u_0,v_0)$ for decreasing sampling step $h$. The top row shows the discretized measures $(\mu_h,\nu_h)$ (the measure is a sum of Dirac masses, which is vizualized as a piecewise constant function to indicate the cells over which the densities have been integrated).   
	    Last row: same but for the convergence of
	    $(u_{\lambda,0},v_{\lambda,0})$ and $(\bar u_{\lambda,0},\bar v_{\lambda,0})$ as $\lambda$
	    gets smaller. 
	    }
	    \label{fig-deterministic-potentials}
\end{figure}

\end{document}